\newbox\gnBoxA
\newdimen\gnCornerHgt
\newdimen\gnArgHgt
\def\gnmb #1{%
\setbox\gnBoxA=\hbox{$#1$}%
\gnArgHgt=\ht\gnBoxA%
\ifnum     \gnArgHgt<\gnCornerHgt \gnArgHgt=0pt%
\else \advance \gnArgHgt by -\gnCornerHgt%
\fi \raise\gnArgHgt\hbox{$\ulcorner$} \box\gnBoxA %
\raise\gnArgHgt\hbox{$\urcorner$}}
\theoremstyle{plain}
\newtheorem{thm}{Theorem}[section]
\newtheorem{cor}[thm]{Corollary}
\newtheorem{lem}[thm]{Lemma}
\newtheorem{prop}[thm]{Proposition}
\theoremstyle{definition}
\newtheorem{rem}[thm]{Remark}
\newcommand{\defiff}{\stackrel{\mbox{\scriptsize $\textrm{def}$}}{\iff}}
\newcommand{\ain}{\mathrel{\varepsilon}}
\newcommand{\nain}{\mathrel{\centernot\varepsilon}}
\begin{document}
\title{A weak set theory that proves its own consistency}
\author{Fedor~Pakhomov${}^1{}{}^2$}
\date{{\small
${}^1$Steklov Mathematical Institute of Russian Academy of Sciences, Moscow \\ 
${}^2$Institute of Mathematics of the Czech Academy of Sciences, Prague\\}
\vspace{10pt}
August 2019}

\maketitle

\begin{abstract}
In the paper we introduce a  weak set theory $\mathsf{H}_{<\omega}$.
A formalization of arithmetic on finite von Neumann ordinals gives an embedding of arithmetical language into this theory. We show that $\mathsf{H}_{<\omega}$ proves a natural arithmetization of its own Hilbert-style consistency. Unlike some previous examples of theories proving their own consistency, $\mathsf{H}_{<\omega}$ appears to be sufficiently natural.

 The theory $\mathsf{H}_{<\omega}$ is infinitely axiomatizable and proves existence of all individual hereditarily finite sets, but at the same time all its finite subtheories have finite models. Therefore, our example avoids the strong version of G{\"o}del's second incompleteness theorem (due to Pudl{\'a}k) that asserts that no consistent theory interpreting Robinson's arithmetic $\mathsf{Q}$ proves its own consistency \cite{Pud85}. To show that $\mathsf{H}_{<\omega}$ proves its own consistency we establish a conservation result connecting Kalmar elementary arithmetic $\mathsf{EA}$ and $\mathsf{H}_{<\omega}$.
 
 We also consider the version of $\mathsf{H}_{<\omega}$ over higher order logic denoted $\mathsf{H}^{\omega}_{<\omega}$. It has the same ``non-G{\"o}delian'' property as $\mathsf{H}_{<\omega}$ but happens to be more attractive from a technical point of view. In particular, we show that $\mathsf{H}^{\omega}_{<\omega}$ proves a $\Pi_1$ sentence $\varphi$ of the predicate-only version of arithmetical language iff $\mathsf{EA}$ proves that $\varphi$ holds on the superexponential cut.
\end{abstract}

\section{Introduction} G{\"o}del's incompleteness theorems are among the most outstanding results in mathematical logic. The present paper is about the limits of applicability of G{\"o}del's second incompleteness theorem (G2). The standard non-precise formulation of G2 is that no strong enough formal system could prove its own consistency. However in order to make this formulation a mathematical theorem it is necessary to specify exact mathematical meaning to the terms used in it.

Kurt G\"odel in \cite{God31} has considered theories that extend the system $\mathsf{P}$ (a variant of Principia Mathematica system) by primitive recursive sets of axioms. He have developed certain formalization of the consistency assertion for theories of this class within the language of system $\mathsf{P}$. And has proved that no consistent theory from this class could prove the formalization of its own consistency. Here it is important to note that although G\"odel used higher-order system $\mathsf{P}$, he have not relied on the higher-order features of system $\mathsf{P}$ and essentially the same construction would work with first-order arithmetic $\mathsf{PA}$ as the base system.

Further we will discussing more general forms of G2 and examples of theories for which it fails. Our focus will be in generalizations that still talk about unprovability of formalized (in some sense) consistency assertion. There are  generalizations of G2 that does not fall into this category but rather are formulated in terms of interpretability \cite{Fef60,Vis08}. 

One approach to define what is a formalized consistency assertion for a theory $T$ is just to fix some canonical way of producing sentences $\mathsf{Con}(T)$ from an axiomatization of $T$. Typically this is done in the language of first arithmetic\footnote{We note that the usage of first-order arithmetical language here is primarily due to the fact that it is the standard approach in the field and in principle it is possible to formalize consistency statement in any other first-order language of the same of higher expressive power, e.g. binary strings with concatenation, set-theory.}. This allows to ask whether a theory $T$ proves $\mathsf{Con}(T)$, provided that there is a fixed embedding of arithmetical language into the language of $T$. In the present paper we give an examples of theories that prove their own consistency in this sense. 

The other approach is to consider some axiomatic conditions on what constitute a suitable formalization of the consistency assertion. The generalization of G2 developed by David Hilbert and Paul Bernays \cite{BerHil39} was based on the conditions on the formalized proof predicate, i.e. a predicate $\mathsf{Prf}_T(x,y)$ which intended meaning is that $x$ is a code of $T$-proof of formula with G{\"o}del number $y$. Latter the approach was simplified by Martin Hugo L{\"o}b \cite{Lob55}, who instead has formulated what is now known as Hilbert-Bernays-L\"ob derivability conditions (HBL conditions) on predicate $\mathsf{Prv}_T(x)$:
\begin{enumerate}
    \item $T\vdash \varphi\;\Rightarrow\;T\vdash\mathsf{Prv}_T(\gnmb{\varphi})$, for all sentences $\varphi$;
    \item $T\vdash \mathsf{Prv}_T(\gnmb{\varphi\to\psi})\to(\mathsf{Prv}_T(\gnmb{\varphi})\to \mathsf{Prv}_T(\gnmb{\psi}))$, for all sentences $\varphi,\psi$;
    \item $T\vdash\mathsf{Prv}_T(\gnmb{\varphi})\to \mathsf{Prv}_T(\gnmb{\mathsf{Prv}_T(\gnmb{\varphi})})$, for all sentences $\varphi$.
\end{enumerate}
The intended meaning of $\mathsf{Prv}_T(x)$ is that formula with G{\"o}del number $x$ is provable in $T$. By a standard technique, it is possible to prove that if $T$ is a consistent extension of Robinson's arithmetic  $\mathsf{Q}$ ($\mathsf{PA}$ without induction) and $\mathsf{Prv}_T(x)$ satisfies HBL-conditions, then $T$ does not prove the corresponding  consistency assertion $\lnot\mathsf{Prv}_T(\gnmb{0=1})$ (see Petr H{\'a}jek and Pavel Pudl{\'a}k book \cite[Theorem~III.2.21]{HajPud17}). In fact, in addition to HBL-conditions, the proof of the result only uses first-order reasoning in $T$ and existence of sentence $\varphi$ such that  $T\vdash \varphi\mathrel{\leftrightarrow} \lnot \Box \varphi$ (which is produced by Diagonal Lemma). Since even weaker Robinson's arithmetic $\mathsf{R}$ proves Diagonal Lemma \cite[Theorem~3]{Sve07} the argument works for its extensions as well (see Appendix \ref{interpretability} for definition of $\mathsf{R}$). There are very general results about unprovability of consistency under HBL-conditions and either presence of formalization of syntax (by Robert Jeroslow \cite{Jer73}) or presence of the appropriate fixed points (by Lev Beklemishev and Daniyar Shamkanov \cite{BekSha16}). Beklemishev and Shamkanov \cite{BekSha16} provided an example of a system based on a certain contraction-free logic (instead of usually used classical or intuitonistic logic), where their abstract version of G2 fails.

Further we will focus on provability/unprovability of natural formalizations of the consistency assertion rather than formalizations arising from arbitrary predicates satisfying HBL-conditions. Solomon Feferman \cite{Fef60} fixed an arithmetization of (Hilbert-style) provability in first-order logic. This allowed him to produce formalized consistency assertion $\mathsf{Con}(T)$ given formula $\mathsf{Ax}_T(x)$ defining the set of axiom of $T$; further, for naturally chosen formula $\mathsf{Ax}_T(x)$ we will call the sentente $\mathsf{Con}(T)$ the Hilbert-style consistency of $T$. Strong enough consistent c.e. theories are capable to show that their Hilbert-style provability predicate satisfies HBL conditions and thus they could not prove their own Hilbert-style consistency. In \cite{Fef60} Feferman considered extensions of $\mathsf{PA}$, but it works even for c.e. extensions of much weaker system $\mathsf{EA}+\mathsf{B}\Sigma_1$. 

Although HBL conditions do not necessary hold in the case of some weaker arithmetical c.e. theories, in many cases it is still possible to establish unprovability of Hilbert-style consistency for them.  Namely Pudl{\'a}k \cite{Pud85} proved that any consistent c.e. arithmetical theory extending Robinson's arithmetic $\mathsf{Q}$ could not prove its own Hilbert-style consistency statement. The essential part of the Pudl{\'a}k's argument was to show that a failure of G2 for a theory $T$, where HBL conditions might not be satisfied, leads to a failure of G2 in a different theory $T'$ (interpretable in $T$), where HBL are satisfied.

However, here it is crucial that one considers c.e. axiomatization of a theory. As it have been already observed by Feferman \cite{Fef60}, for certain non-$\Sigma_1$ formula defining the set of axioms of $\mathsf{PA}$ the theory $\mathsf{PA}$ could prove the respective consistency statement. Another interesting example of similar sort have been provided by Karl-Georg Niebergall \cite{Nie05} who showed that the theory $(\mathsf{PA}+\mathsf{RFN}(\mathsf{PA}))\cap (\mathsf{PA}+\mbox{ all true $\Pi_1$ sentences})$ could prove its own natural consistency sentence.

Pudl{\'a}k's result mentioned above could be generalize further to arithmetical theories in predicate-only signature that could prove totality of successor function (the totality of successor function is important since it is necessary for the cut-shortening technique employed by Pudl{\'a}k). However, Dan Willard has constructed examples of c.e. arithmetical theories that could not prove the totality of successor function but could prove their own Hilbert-style consistency \cite{Wil01,Wil06}. The theories in his examples are not completely natural in the sense that some of axioms are constructed using Diagonal Lemma. The main result of the present paper is the construction of a more natural example of this kind.

We define a theory $\mathsf{H}_{<\omega}$ and show that it proves its own Hilbert-style consistency. The language of system $\mathsf{H}_{\omega}$ set theory with additional unary function $\overline{\mathsf{V}}$. 

First we define a weaker system $\mathsf{H}$ with the following axioms.
\begin{enumerate}
    \item  $x=y \mathrel{\leftrightarrow}\forall z(z\in x\mathrel{\leftrightarrow} z\in y)$ (Extensionality);
\item $\exists y \forall z(z\in y\mathrel{\leftrightarrow}z\in x \land \varphi(z))$, where $\varphi(z)$ ranges over first-order formulas without free occurrences of $y$ (Separation); 
\item $y\in \overline{\mathsf{V}}(x)\mathrel{\leftrightarrow}(\exists z\in x)(y\subseteq \overline{\mathsf{V}}(z))$ (Defining axiom for $\overline{\mathsf{V}}$).
\end{enumerate}
Here the function $\overline{\mathsf{V}}$ is intended to be the function that maps a set $x$ to the least level $\mathsf{V}_{\alpha}$ of von Neumann hierarchy such that $x\subseteq \mathsf{V}_\alpha$; note that the defining axiom for $\overline{\mathsf{V}}$ essentially states that $\overline{\mathsf{V}}$ satisfies the following recursive definition:
$$\overline{\mathsf{V}}(x)=\bigcup\limits_{y\in x} \mathcal{P}(\overline{\mathsf{V}}(y)).$$
And the theory $\mathsf{H}_{<\omega}$ is defined to be the extension of $\mathsf{H}$ by all the axioms $\exists x\;\mathsf{Nmb}_{n}(x)$ stating the existence of all individual finite von Neumann ordinals $n$.

The theory $\mathsf{H}$ is an incomplete first-order theory which intended models are arbitrary levels of von Neumann hierarchy $(\mathsf{V}_\alpha,\in,\overline{\mathsf{V}})$. To motivate this let us look at the second-order version $\mathsf{H}^2$ of the system $\mathsf{H}$ ($\mathsf{H}^2$ is $\mathsf{H}$ with the scheme of separation extended to the second-order language).  It is easy to show that models of $\mathsf{H}^2$ with the standard second-order part are up to isomorphism the models $(\mathsf{V}_\alpha,\in,\overline{\mathsf{V}})$. And the additional axioms of $\mathsf{H}_{<\omega}$ rule out the finite levels of von-Neumann hierarchy as potential models leaving only the models that contain all the individual finite sets.

Theory $\mathsf{H}$ is capable of formalization of ordinal arithmetic in a relatively standard manner. Important restriction here is that both $\mathsf{H}$ and $\mathsf{H}_{<\omega}$  could not prove totality of successor function. The restriction of the ordinal arithmetic to the finite ordinals gives an interpretation of predicate version of arithmetical language in $\mathsf{H}$. Observe that $\mathsf{H}$ could meaningfully talk about $\Pi_1$ sentences of predicate-only version of arithmetical language (we denote this class as $\Pi_1^{\mathsf{pred}}$). In a standard manner we transform the usual Hilbert-style consistency sentence $\mathsf{Con}(T)$  (that is $\Pi_1$ is standard arithmetical language) to a predicate-only $\Pi_1^{\mathsf{pred}}$ sentence $\mathsf{Con}^{\mathsf{pred}}(T)$. Our main result is that the theory $\mathsf{H}_{<\omega}$ (and actually even $\mathsf{H}$) proves $\mathsf{Con}^{\mathsf{pred}}(\mathsf{H}_{<\omega})$. We note that our result is fairly robust with respect to the choice of the particular arithmetization of the notion of proof, which determines the exact form of the sentence $\mathsf{Con}^{\mathsf{pred}}(T)$ (see discussion in the beginning of Section \ref{Non-god_section}).  

For technical reasons it happens to be easier to establish the analogous result for the higher-order version $\mathsf{H}^{\omega}$ of the system $\mathsf{H}$. Actually the system $\mathsf{H}^{\omega}$ allows for more natural reasoning than $\mathsf{H}$. Both $\mathsf{H}$ and $\mathsf{H}^{\omega}$ are consistent with the situation when there is the maximal well-founded rank $\alpha_0$ of sets. And when one reasons about the sets which rank is close to $\alpha_0$ in $\mathsf{H}$ there might be sever restrictions on the type of set-theoretic construction it is possible to perform with the set. For example, there could be sets $x,y$ such that their Cartesian product does not exist. Within the system $\mathsf{H}^{\omega}$ this kind of problems could be addressed; for example, even if for two sets $x,y$ their Cartesian product is not a set, still $x\times y$ could be represented by a higher order object. Thus we find $\mathsf{H}^{\omega}_{<\omega}$ to be even more interesting example of non-G{\"o}delian theory than $\mathsf{H}_{<\omega}$ itself. Since the case of theory $\mathsf{H}^{\omega}$ is technically simpler, we first study it is Section \ref{H_omega_section}--\ref{Non-god_section}. And consider the case of the theory $\mathsf{H}$ only latter in Section \ref{H_appendix}.

Now we give a general idea of how to prove Hilbert-style consistency of $\mathsf{H}^{\omega}_{<\omega}$ in $\mathsf{H}^{\omega}$. Although it is possible to give a direct proof of consistency of  $\mathsf{H}^{\omega}_{<\omega}$ in $\mathsf{H}^{\omega}$, in the paper we obtain the result by proving suitable conservativity theorems.  Our main technical result is Lemma \ref{HCST_EA_ON}: $\mathsf{H}^{\omega}$ proves a $\Pi_1^{\mathsf{pred}}$ sentence $\varphi$ iff $\mathsf{EA}$ proves $\varphi^{\mathcal{S}}$. Here $\varphi^{\mathcal{S}}$ is the relativization of $\varphi$ to the superexponential cut $\mathcal{S}$ that consists of all natural numbers $x$ for which the superexponentiation $2^0_{x}$ is defined;  by definition we put $2^y_0=y$ and $2^y_{x+1}=2^{2^y_x}$. From Lemma \ref{HCST_EA_ON} it follows that in order to prove $\mathsf{Con}^{\mathsf{pred}}(\mathsf{H}^{\omega}_{<\omega})$ in $\mathsf{H}^{\omega}$ it is enough to prove in $\mathsf{EA}$ that for any G\"odel number $p$ of a $\mathsf{H}^{\omega}_{<\omega}$-proof, if $2^0_p$ is defined then $p$ could not be a proof of contradiction in $\mathsf{H}^{\omega}_{<\omega}$. To prove the latter inside $\mathsf{EA}$ we construct a finite model $\mathcal{M}$ of the size $\le 2^0_p$ that satisfy all the axioms of $\mathsf{H}^{\omega}_{<\omega}$ that occur in $p$.

To prove the mentioned conservation result between $\mathsf{H}^{\omega}$ and $\mathsf{EA}$ (Lemma \ref{HCST_EA_ON}), we introduce a theory $\mathsf{EA}^{\mathsf{set}}$ that is a theory in the language with the predicate $\in$, unary function $\mathcal{P}$, and unary function $\overline{\mathsf{V}}$. 

On one hand, we prove that that the theory $\mathsf{H}^{\omega}$ proves the same $\Pi_1^{\mathsf{set}}(\overline{\mathsf{V}})$ set-theoretic sentences about hereditarily finite sets as the theory $\mathsf{EA}^{\mathsf{set}}$. On the other hand, we show that there is a natural bi-interpretation between $\mathsf{EA}$ and $\mathsf{EA}^{\mathsf{set}}$.  The bi-interpretation is formed by the cardinal arithmetic interpretation of $\mathsf{EA}$ in $\mathsf{EA}^{\mathsf{set}}$ together with the interpretation of $\mathsf{EA}^{\mathsf{set}}$ in $\mathsf{EA}$ by Ackermann's membership predicate (see Section \ref{bi-interpretabilit_section}). Additionally we show that under this bi-interpretation the ordinal arithmetic in the theory $\mathsf{EA}^{\mathsf{set}}$ correspond to the arithmetic on numbers from superexponential cut in $\mathsf{EA}$. Combination of this three facts allows us to prove Lemma \ref{HCST_EA_ON}.

We note that bi-interpretability between $\mathsf{EA}$ and $\mathsf{EA}^{\mathsf{set}}$ is a modified version of a result by Richard Pettigrew \cite{Pet09}, who have introduced a set theory $\mathsf{EA}^{\star}$ and proved that it is bi-interpretable with $\mathsf{EA}$ (see Section \ref{Theory_EA^set} for more extensive discussion). 

\section{Theories $\mathsf{H}^{\omega}$ and $\mathsf{H}^{\omega}_{<\omega}$}\label{H_omega_section}
\subsection{Higher order logic}
We start with the description of the version of higher order logic that we use for the theories $\mathsf{H}^{\omega}$ and $\mathsf{H}^{\omega}_{<\omega}$.

Types are indexed by natural numbers. The type $0$ is the type of individual objects and the type $n+1$ is the type of sets of the objects of the type $n$. We have quantifiers over objects of any type. We allow comprehension over arbitrary properties expressible in higher order logic.

Formally, we define the deductive system for higher order logic on top of deductive system for many sorted first-order logic with equality, where types are indexed by natural numbers. We have membership predicates $x^{(n)}\ain y^{(n+1)}$ between object $x^{(n)}$ of the type $n$ and object $y^{(n+1)}$ of the type $n+1$. As additional principles we have extensionality axioms
$$\forall z^{(n)}\;(z^{(n)}\ain x^{(n+1)}\mathrel{\leftrightarrow}z^{(n)}\ain y^{(n+1)})\to x^{(n+1)}=y^{(n+1)}$$
and comprehension schemes
$$\exists x^{(n+1)}\forall y^{(n)}\;(y^{(n)}\ain x^{(n+1)}\mathrel{\leftrightarrow}\varphi(y^{(n)})),$$
where $\varphi$ is any higher order formula without free occurrences of $x^{(n+1)}$.

We could develop some standard constructions in the higher order logic. First let us define representations for ordered pairs $\langle x^{(n)},y^{(m)}\rangle$. A pair  $\langle x^{(n)},y^{(n)}\rangle$ is encoded by the set $\{\{x^{(n)}\},\{x^{(n)},y^{(n)}\}\}$ of the type $n+2$. For $m<n$ the pairs $\langle x^{(n)},y^{(m)}\rangle$ are  encoded by the pairs $\langle x^{(n)},\mathop{\underbrace{\{\ldots\{}y^{(m)}\underbrace{\}\ldots\}}}\limits_{\mbox{\footnotesize $n-m$ times nested}}\rangle$. And for $m>n$ the pairs  $\langle x^{(n)},y^{(m)}\rangle$ are encoded by the pairs $\langle \mathop{\underbrace{\{\ldots\{}x^{(n)}\underbrace{\}\ldots\}}}\limits_{\mbox{\footnotesize $m-n$ times nested}},y^{(m)}\rangle$. For sets $X^{(n+1)}$ and $Y^{(m+1)}$ we denote as $X^{(n+1)}\times Y^{(m+1)}$ their Cartesian product $\{\langle x^{(n)},y^{(m)}\rangle \mid x^{(n)}\in X^{(n+1)}, y^{(m)}\in Y^{(m+1)}\}$. We encode a function $f\colon  X^{(n+1)}\to Y^{({m+1})}$  by its graph, i.e. by the set of pairs  $$\{\langle x^{(n)}, y^{(m)}\rangle \mid x^{(n)}\ain X^{(n+1)}\mbox{ and }y^{(m)}=f(x^{(n)})\}.$$  In the same way we could encode partial functions. 
For a set $x^{(n)}$ we denote by $\mathcal{P}(x^{(n)})$ the type $n+1$ set that consists of all the sets $y^{(n)}$ that are subsets of $x^{(n)}$.


\subsection{Theories  $\mathsf{H}^{\omega}$ and $\mathsf{H}^{\omega}_{<\omega}$} \label{Higher_order_theory}
Theories $\mathsf{H}^{\omega}$ and $\mathsf{H}^{\omega}_{<\omega}$ are formulated over higher order logic with the only non-logical symbols being membership predicate $x^{(0)}\in y^{(0)}$ and function $\overline{\mathsf{V}}(x^{(0)})$ with the values of the type $0$.

We use some additional naming convention. We call objects of the  type $0$ sets and use small Latin letters $x,y,z,\ldots$ for variables over sets. We call object of type $n$ type $n$ sets. Also we call type $1$ sets classes and use capital Latin letters $X,Y,Z,\ldots$ without upper indexes for variables that range over classes. We denote the class of all sets as $\mathsf{V}$. We use $\subseteq$ as the standard shorthand both for sets of type $0$ and of higher types: the expression $x\subseteq y$ is the shorthand for $\forall z\; (z\in x\to z\in y)$, the expression $x^{(n+1)}\subseteq y^{(n+1)}$ is the shorthand for  $\forall z^{(n)}\; (z^{(n)}\ain x^{(n+1)}\to z^{(n)}\ain y^{(n+1)})$, the expression $x\subseteq Y$ is the shorthand for $\forall z\;(z\in x\to z\ain Y)$, and the expression $X\subseteq y$ is the shorthand for $\forall z\;(z\in X\to z\ain y)$.

The axioms of $\mathsf{H}^{\omega}$ are:
\begin{enumerate}
\item $x=y \mathrel{\leftrightarrow}\forall z(z\in x\mathrel{\leftrightarrow} z\in y) $ (Extensionality);
\item $\exists z \forall w(w\in z\mathrel{\leftrightarrow}w\in x \land w\ain Y)$ (Separation); 
\item \label{V_existence_axiom} $ y\in \overline{\mathsf{V}}(x)\mathrel{\leftrightarrow}(\exists z\in x)(y\subseteq \overline{\mathsf{V}}(z))$ (Defining Axiom for $\overline{\mathsf{V}}$).
\end{enumerate}

\begin{lem} \label{foundation_in_H^omega} Theory $\mathsf{H}^{\omega}$ proves the axiom of $\varepsilon$-induction $$\forall X(\forall y((\forall z\in y) z\ain X\to y\ain X)\to \forall y ( y\ain X)).$$
\end{lem}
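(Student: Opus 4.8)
The plan is to argue by contradiction and to locate the well-foundedness entirely in a Russell/Cantor obstruction fed by the recursive shape of $\overline{\mathsf{V}}$. Fix a class $X$ with the inductivity hypothesis $\forall y((\forall z\in y)\,z\ain X\to y\ain X)$; the goal is $\forall y\,y\ain X$. Two facts drive everything, and it is important that both are \emph{induction-free}, i.e. provable from the axioms alone. First, the right-to-left direction of the defining axiom for $\overline{\mathsf{V}}$ (axiom~\ref{V_existence_axiom}) gives, for all sets $x,z,w$, that $z\in x$ and $w\subseteq\overline{\mathsf{V}}(z)$ imply $w\in\overline{\mathsf{V}}(x)$ (take $z$ as the witness). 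Second, Separation alone refutes the existence of any set $S$ with $\forall w(w\subseteq S\to w\in S)$: forming $R=\{\,u\in S : u\notin u\,\}$ gives $R\subseteq S$, hence $R\in S$, and then $R\in R\leftrightarrow R\notin R$, a contradiction. This second fact is the only place well-foundedness will ultimately come from.

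First I would introduce, by comprehension, the auxiliary class $G=\{\,x\mid \overline{\mathsf{V}}(x)\subseteq X\,\}$ and check that it again satisfies the inductivity hypothesis; this is a short computation using no foundation. Indeed, if every $z\in x$ lies in $G$, then for any $w\in\overline{\mathsf{V}}(x)$ the left-to-right direction of axiom~\ref{V_existence_axiom} yields $z\in x$ with $w\subseteq\overline{\mathsf{V}}(z)\subseteq X$, so all members of $w$ lie in $X$ and hence $w\ain X$ by inductivity of $X$; thus $\overline{\mathsf{V}}(x)\subseteq X$, i.e. $x\ain G$. The purpose of passing to $G$ is that membership in $G$ is controlled by the $\overline{\mathsf{V}}$-hierarchy, so that any $\in$-descent through counterexamples is mirrored by a descent of the sets $\overline{\mathsf{V}}(\cdot)$, which is exactly the data the Russell obstruction can consume.

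The finishing move is a dichotomy. Assume toward a contradiction that some $a\nain X$; by inductivity every counterexample has a counterexample among its members, so there is no $\in$-minimal counterexample, and we may descend from $a$ to some $a_1\in a$ with $a_1\nain X$. By the induction-free containment, every subset of $\overline{\mathsf{V}}(a_1)$ is a member of $\overline{\mathsf{V}}(a)$. In the \textbf{easy case} $\overline{\mathsf{V}}(a)\subseteq\overline{\mathsf{V}}(a_1)$, every subset of $\overline{\mathsf{V}}(a)$ is then a member of $\overline{\mathsf{V}}(a)$, so $S=\overline{\mathsf{V}}(a)$ violates the second fact and we are done; the guiding instance here is a self-membered set $\Omega=\{\Omega\}$, for which axiom~\ref{V_existence_axiom} forces $\forall w(w\in\overline{\mathsf{V}}(\Omega)\leftrightarrow w\subseteq\overline{\mathsf{V}}(\Omega))$ and Russell bites immediately.

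The step I expect to be the main obstacle is the remaining \textbf{hard case}, where the levels $\overline{\mathsf{V}}(\cdot)$ strictly descend along every counterexample chain and no single level contains all its subsets outright. Here one must rule out a perpetual strict $\subseteq$-descent of the $\overline{\mathsf{V}}$-values — harmless on cardinality grounds alone if infinite sets are present — and instead force a Russell contradiction from the containment iterated through a level. Concretely I would use Separation inside a counterexample $a$ to form the set $D=\{\,u\in a : u\nain X\,\}\neq\emptyset$ of its counterexample-members, pass to $\overline{\mathsf{V}}(D)$, and try to show that iterating the containment $w\subseteq\overline{\mathsf{V}}(z)\,(z\in x)\Rightarrow w\in\overline{\mathsf{V}}(x)$ traps a level that contains all its own subsets. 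The delicacy is that the monotonicity, transitivity, and self-containment properties of $\overline{\mathsf{V}}$ that would make this transparent are each equivalent to the very $\varepsilon$-induction being proved, so none may be assumed; only the one-directional containment and the Separation-based Russell fact are genuinely available. This is exactly why the defining axiom was set up with a power-set-like right-hand side, and extracting the Cantorian contradiction from these meagre induction-free resources is where the real work lies. Once it is obtained, no counterexample to $X$ survives, giving $\forall y\,y\ain X$.
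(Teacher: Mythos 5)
Your proposal is incomplete exactly where you say the real work lies: the ``hard case'' is never resolved, and the strategy you sketch for it cannot be completed in the form given. Descending from a counterexample $a$ to a member-counterexample $a_1$ and iterating the containment ($z\in x$ and $w\subseteq\overline{\mathsf{V}}(z)$ imply $w\in\overline{\mathsf{V}}(x)$) only produces, at each stage, one more counterexample one level deeper; since failure of $\varepsilon$-induction is precisely the hypothesis in force, nothing rules out a strictly descending sequence of $\overline{\mathsf{V}}$-values of unbounded length, and no finite number of iterations of your containment will ever ``trap a level that contains all its own subsets.'' Forming $D=\{u\in a : u\nain X\}$ does not change this: $\overline{\mathsf{V}}(D)$ again relates to the values $\overline{\mathsf{V}}(u)$, $u\in D$, only through the same one-directional containment, so you are back in the identical situation one step down. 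Your two induction-free facts and the easy case (where $\overline{\mathsf{V}}(a)\subseteq\overline{\mathsf{V}}(a_1)$) are correct, but they do not assemble into a proof.

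The missing idea --- and it is what the paper actually does --- is to stop chasing chains and use the higher-order comprehension of $\mathsf{H}^{\omega}$ to treat all counterexamples simultaneously. The paper defines the class $\mathsf{WF}$ impredicatively as the intersection of all progressive classes, and then, assuming some $x_0\nain\mathsf{WF}$ exists, forms the single class $A=\bigcap_{y\nain\mathsf{WF}}\overline{\mathsf{V}}(y)$, the intersection of the levels $\overline{\mathsf{V}}(y)$ over \emph{all} non-well-founded $y$ at once. Your first fact, combined with the contrapositive of progressivity of $\mathsf{WF}$ (every non-well-founded $y$ has a non-well-founded element $z$), shows that $A$ is progressive; hence $\mathsf{WF}\subseteq A\subseteq\overline{\mathsf{V}}(x_0)$. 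Now Separation inside the \emph{set} $\overline{\mathsf{V}}(x_0)$ produces a set $\mathsf{wf}$ coextensive with the class $\mathsf{WF}$; progressivity of $\mathsf{WF}$ then gives $\mathsf{wf}\in\mathsf{wf}$, while progressivity of the Russell class $\{x\mid x\not\in x\}$ (which, being progressive, contains $\mathsf{WF}$) gives $\mathsf{wf}\not\in\mathsf{wf}$ --- a contradiction. So your ingredients (the one-directional containment and the Russell obstruction) are the right ones, but the Russell argument must be applied to a set-sized copy of the least progressive class, not to a level $\overline{\mathsf{V}}(a)$ of a single counterexample; the simultaneous intersection $A$ is the device that makes your ``perpetual strict descent'' worry evaporate, and it is unavailable to any argument that follows one descending chain at a time.
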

\begin{proof} We say that a class $X$ is progressive if $\forall x ((\forall y\in x)\;y\ain X\to x\ain X)$. We define the class $\mathsf{WF}$ to be the intersection of all  progressive classes. The class $\mathsf{WF}$ is progressive itself. Clearly, if $\mathsf{WF}=\mathsf{V}$, then we are done.

Assume for a contradiction that $\exists x\; (x\nain \mathsf{WF})$. Then we consider the class $$A=\{x\ain \mathsf{WF}\mid \forall y \nain \mathsf{WF} \; (x\in \overline{\mathsf{V}}(y))\}.$$
In other words, $A$ is the intersection $\bigcap\limits_{y\nain \mathsf{WF}}\overline{\mathsf{V}}(y)$.First we prove that $A$ is progressive. We consider some $x\subseteq A$ and claim that $x\in A$. For this we consider arbitrary $y\nain \mathsf{WF}$ and prove that $x\in \overline{\mathsf{V}}(y)$. By progressivity of $\mathsf{WF}$ there is $z\nain \mathsf{WF}$ such that $z\in y$. Since $x\subseteq A$, by definition of $A$, we have $x\subseteq \overline{\mathsf{V}}(z)$. By definition of $\overline{\mathsf{V}}$, the set $\overline{\mathsf{V}}(y)$ contains any subset of $\overline{\mathsf{V}}(z)$ and thus our claim $x\in \overline{\mathsf{V}}(y)$ holds. 

Thus $\mathsf{WF}\subseteq A \subseteq \overline{\mathsf{V}}(x)$, for any set $x\nain \mathsf{WF}$. We fix any $x_0\nain \mathsf{WF}$ and by separation construct the set $\mathsf{wf}=\{x\in \overline{\mathsf{V}}(x_0)\mid x\ain \mathsf{WF}\}$. Clearly, $\mathsf{wf}$ consists of exactly the same elements as $\mathsf{WF}$ and in particular  $\mathsf{wf}\subseteq \mathsf{WF}$. Since $\mathsf{wf}$ is a set,  progressivity of $\mathsf{WF}$ implies that $\mathsf{wf}\ain \mathsf{WF}$. Hence $\mathsf{wf}\in \mathsf{wf}$. Observe that the class $\{x\mid x\not \in x\}$ is progressive, hence $\mathsf{wf}$ is an element of this class. Thus $\mathsf{wf}\not \in \mathsf{wf}$, contradiction. \end{proof}

The theory $\mathsf{H}^{\omega}_{<\omega}$ is the extension of $\mathsf{H}^{\omega}$ by the axioms stating the existence of all individual finite von Neumann ordinals. Let $\mathsf{Nmb}_0(x)$ be the formula $\forall y\;(y\not\in x)$ and let $\mathsf{Nmb}_{n+1}(x)$ be the formulas $\exists y\;(\mathsf{Nmb}_{n}(y)\land \forall z(z\in x\mathrel{\leftrightarrow} z=y\lor z\in y))$. Theory $\mathsf{H}^{\omega}_{<\omega}$ is the extension of $\mathsf{H}^{\omega}$ by the axioms $\exists x\;\mathsf{Nmb}_n(x)$, for all natural numbers $n$.

\subsection{Ordinal arithmetic in $\mathsf{H}^{\omega}$}\label{ordinal_arithmetic_in_H_omega}
Due to the fact that the class of intended models of $\mathsf{H}^{\omega}$ are the models $(\mathsf{V}_{\alpha},\in,\overline{\mathsf{V}})$, for ordinals $\alpha>0$, a lot of functions on sets that are total in stronger set theories could not be proved to be total in $\mathsf{H}^{\omega}$. Thus we need to work with partial functions. 

We make the following definitions inside $\mathsf{H}^{\omega}$:
\begin{enumerate}
\item class of transitive sets $\mathsf{Trans}=\{x\mid \forall y\in x\; y\subseteq x\}$;
\item class of ordinals $\mathsf{On}=\{x\mid x\ain \mathsf{Trans}\mbox{ and } (\forall y\in x)\; y\ain \mathsf{Trans} \}$;
\item the order $<$ on ordinals is given by the predicate $\in$;
\item the ordinal $0=\emptyset$ (it is defined only if $\mathsf{V}$ is non-empty);
\item the partial successor function $S\colon \mathsf{On}\to \mathsf{On}$ 
$$\beta=S(\alpha)\defiff \forall x (x\in \beta \mathrel{\leftrightarrow} x \in \alpha \lor x=\alpha);$$
\item the class of successor ordinals $\mathsf{Succ}=\{\alpha \mid (\exists \beta \ain \mathsf{On})\; \alpha= S(\beta)\}$;
\item the class of natural numbers $\mathsf{Nat}=\{\alpha \mid \forall \beta\le \alpha (\beta\in \mathsf{Succ}\lor \beta=0)\}$.
\item \label{add_def} the partial function $+\colon \mathsf{On}\times \mathsf{On}\to \mathsf{On}$ that is the only partial function such that for all $\alpha,\beta\ain \mathsf{On}$
$$\alpha+\beta=\sup(\{\alpha\}\cup \{S(\alpha+\gamma)\mid \gamma<\beta\}),$$
where the left part is defined whenever the right part is defined, i.e. the values $\alpha+\beta$ should be defined iff the values $S(\alpha+\gamma)$ are defined for all $\gamma<\beta$ and the class $\{\alpha\}\cup \{S(\alpha+\gamma)\mid \gamma<\beta\}$ have a supremum;
\item the partial function $\cdot\colon \mathsf{On}\times \mathsf{On}\to \mathsf{On}$ is the only partial function such that for all $\alpha,\beta\ain \mathsf{On}$
$$\alpha \mathop{\cdot}\beta=\sup(\{\alpha\mathop{\cdot}\gamma+\alpha \mid \gamma<\beta\}),$$
where the left part is defined whenever the right part is defined;
\item the partial base $2$ exponentiation $2^{x}\colon \mathsf{On}\to \mathsf{On}$ is the only partial function such that for all $\alpha,\beta\ain \mathsf{On}$
$$2^\alpha=\sup(\{S(0)\}\cup\{2^\beta+2^\beta \mid \beta<\alpha\}),$$
where the left part is defined whenever the right part is defined.
\end{enumerate}

We note that the existence and uniqueness of addition, multiplication, and base 2 exponentiation functions could be proved in a standard fashion. We consider only the case of addition, since the cases of the rest of the functions could be covered in the same manner. We call a partial function $+'\colon \mathsf{On}\times \mathsf{On}\to\mathsf{On}$ a partial addition function if for any ordinals $\alpha,\beta$ the fact that $\alpha\mathop{+'}\beta$ is defined implies that $\sup(\{\alpha\}\cup \{S(\alpha\mathop{+'}\gamma)\mid \gamma<\beta\})$ is defined (along side with all $S(\alpha\mathop{+'}\gamma)$, for $\gamma<\beta$) and equal to $\alpha\mathop{+'}\beta$. Using $\varepsilon$-induction it is easy to show that any two partial addition functions agree on the pairs of ordinals where they both are defined. We observe that the union of all the partial addition functions constitute an addition function that satisfies the definition above. Finally, we use $\varepsilon$-induction to prove uniqueness of partial addition function that satisfies \ref{add_def}.


Let us consider predicate-only version of arithmetical language, where we have the predicates $x=y$, $x\le y$, $x=S(y)$, $x=y+z$, $x=yz$, and $x=2^y$ (see \cite[Section~I.2]{HajPud17}). Our definition of the partial arithmetical function on natural numbers in $\mathsf{H}^{\omega}$  gives an interpretation $\mathcal{NAT}$ of this version of arithmetical language in the theory $\mathsf{H}^{\omega}$.



\section{Theory $\mathsf{EA}^{\mathsf{set}}$} \label{Theory_EA^set}
In the section we will develop a set theory $\mathsf{EA}^{\mathsf{set}}$ that 1. is bi-interpretable with $\mathsf{EA}$ and 2. proves that same $\Pi_1^{\mathsf{set}}(\overline{\mathsf{V}})$ sentences of first-order pure set-theoretic language as $\mathsf{H}^{\omega}$ proves for hereditarily finite sets.

We note that R. Pettigrew  \cite{Pet09} already have proposed set theory theory $\mathsf{EA}^{\star}$ that is bi-interpretable with $\mathsf{EA}$. The language of Pettigrew's  theory $\mathsf{EA}^{\star}$ is the language of pure first-order set theory,  unlike the language of our theory $\mathsf{EA}^{\mathsf{set}}$ that in addition uses functions $\overline{\mathsf{V}}(x)$ and $\mathcal{P}(x)$. Using both our and Pettigrew's results about bi-interpretability, it is easy to show that the theory $\mathsf{EA}^{\mathsf{set}}$ is just a definitional extension of the theory $\mathsf{EA}^{\star}$.

Let us outline the main differences between our and Pettigrew's approaches. The first difference is that due to the richer signature, the axiomatization of $\mathsf{EA}^{\mathsf{set}}$ is simpler than the axiomatization of $\mathsf{EA}^{\star}$. Also there is a difference between the constructed bi-interpretations. The bi-interpretation that we define consists of two natural interpretations: Ackermann's interpretation of set theory in arithmetic and cardinal interpretation of arithmetic in set theory. Pettigrew's bi-interpretation consisted of Ackermann's interpretation of set theory in arithmetic and certain somewhat artificial interpretation of arithmetic in set theory. However, Pettigrew's interpretations have the advantage of being strictly inverse to each other. Wheres our interpretations are not strictly inverse to each other: the compositions of Ackermann's and cardinality interpretations are self-interpretations of $\mathsf{EA}^{\star}$ and of $\mathsf{EA}$ that are not identity interpretations themselves, but rather are definably isomorphic to identity interpretations. 

We note that it was possible to use Pettigrew's result to somewhat shorten the paper. However, in order to make the presentation in the present paper more self-sufficient and to make proofs more direct we will not rely on Pettigrew's paper.

The language of the theory $\mathsf{EA}^{\mathsf{set}}$ is the language of first-order set theory expanded by the unary functions  $\overline{\mathsf{V}}(x)$ and $\mathcal{P}(x)$. We denote by $\Delta_0^{\mathsf{set}}(\mathcal{P},\overline{\mathsf{V}})$ the class of first-order formulas, where all the quantifiers are of the form $(\forall x\in t)$ or $(\exists y\in t)$, for some term $t$ built of the functions  $\mathcal{P}$, $\overline{\mathsf{V}}$ and the variables other than $x$. And we denote as $\Pi_1^{\mathsf{set}}(\mathcal{P},\overline{\mathsf{V}})$ the class of formulas $\forall \vec{x}\;\varphi$, where $\varphi\in\Delta_0^{\mathsf{set}}(\mathcal{P},\overline{\mathsf{V}})$. The expression $x\subseteq y$ is a shorthand for $\forall z\;(z\in x\to z\in y)$.

The theory $\mathsf{EA}^{\mathsf{set}}$ is axiomatized over the usual first-order logic with equality by the following axioms. 
\begin{enumerate}
    \item \label{EA^set_1} $x=y \mathrel{\leftrightarrow} \forall z(z\in x\mathrel{\leftrightarrow} z\in y))$ (Extensionality);
    \item \label{EA^set_2} $\exists y\forall z\;(z\in y \mathrel{\leftrightarrow} z\in x\land \varphi(z))$, where $\varphi$ is $\Delta_0^{\mathsf{set}}(\mathcal{P},\overline{\mathsf{V}})$ formula without free occurrences of $y$ ($\Delta_0^{\mathsf{set}}(\mathcal{P},\overline{\mathsf{V}})$-Separation);
    \item \label{EA^set_3} $y\in \mathcal{P}(x)\mathrel{\leftrightarrow} y\subseteq x$ (Defining Axiom for $\mathcal{P}$);
    \item \label{EA^set_4} $y\in \overline{\mathsf{V}}(x)\mathrel{\leftrightarrow} (\exists z\in x) y\in \mathcal{P}(\overline{\mathsf{V}}(z))$ (Defining Axiom for $\overline{\mathsf{V}}$);
\item \label{EA^set_5} $\varphi(\emptyset)\land \forall x,y\;( \varphi(x)\land \varphi(y)\to \varphi(x\cup\{y\}))\to \forall x\; \varphi(x)$, where $\varphi$ range over $\Delta_0^{\mathsf{set}}(\mathcal{P},\overline{\mathsf{V}})$ ($\Delta_0^{\mathsf{set}}(\mathcal{P},\overline{\mathsf{V}})$ Adduction Induction)\footnote{This formulation of the scheme \ref{EA^set_5}. is not completely accurate. Namely, the axioms \ref{EA^set_1}.--\ref{EA^set_4}. by themselves are too weak to prove that for any two sets $x,y$ there exists their union $x\cup y$. Thus in \ref{EA^set_5}. the subformula $\varphi(x\cup\{y\})$ should be read as ``if there exists the set $x\cup\{y\}$ then $\varphi(x\cup\{y\})$''. Formally, the scheme \ref{EA^set_5} have the following formulation in the plain language of $\mathsf{EA}^{\mathsf{set}}$: $$\exists x\;( \forall y\;\lnot y\in x \land \varphi(x))\land \forall x,y,z(\varphi(x)\land \varphi(y) \land \forall w(w\in z\mathrel{\leftrightarrow} w\in x\lor w=y)\to \varphi(z))\to \forall x\;\varphi(x).$$
We note that the use of this kind of induction axioms for theories of hereditarily finite sets is going back to the work of Givant and Tarski \cite{GT77}}.
\end{enumerate}
    
First we need to ``bootstrap'' the theory $\mathsf{EA}^{\mathsf{set}}$. 

Observe that scheme \ref{EA^set_5}. (in the presence of the scheme of $\Delta_0^{\mathsf{set}}(\mathcal{P},\overline{\mathsf{V}})$-separation) implies the usual $\varepsilon$-induction scheme over $\Delta_0^{\mathsf{set}}(\mathcal{P},\overline{\mathsf{V}})$ formulas:
$$\forall x ((\forall y\in x)\;\varphi(y)\to \varphi(x))\to \forall x\;\varphi(x),$$
where $\varphi$ ranges over $\Delta_0^{\mathsf{set}}(\mathcal{P},\overline{\mathsf{V}})$-formulas.

Note that our axiomatization of $\mathsf{EA}^{\mathsf{set}}$ does not contain the standard axiom of pair. This is due to the fact that it follows from other axioms of $\mathsf{EA}^{\mathsf{set}}$.  However in order to prove it we first show that both $\mathsf{EA}^{\mathsf{set}}$ and $\mathsf{H}^{\omega}$ prove number of natural properties of $\overline{\mathsf{V}}$. 
\begin{lem} \label{vcl} Theories $\mathsf{EA}^{\mathsf{set}}$ and $\mathsf{H}^{\omega}$ prove that 
\begin{enumerate}
    \item \label{vcl_prop1}$x\subseteq \overline{\mathsf{V}}(x)$, for all $x$;
    \item \label{vcl_prop2}for all $x$ the set $\overline{\mathsf{V}}(x)$ is transitive, i.e.  $(\forall y\in \overline{\mathsf{V}}(x))\; y\subseteq \overline{\mathsf{V}}(x)$;
    \item\label{vcl_prop3} for all $x$ the set $\overline{\mathsf{V}}(x)$ is closed under subsets, i.e.  $(\forall y\in \overline{\mathsf{V}}(x))(\forall z\subseteq y)\; z\in\overline{\mathsf{V}}(x)$;
    \item \label{vcl_prop4}for all $x$ the set $\overline{\mathsf{V}}(x)$ is closed under $\overline{\mathsf{V}}$, i.e. $(\forall y\in \overline{\mathsf{V}}(x))\; \overline{\mathsf{V}}(y)\in \overline{\mathsf{V}}(x)$;
    \item \label{vcl_prop5} $\overline{\mathsf{V}}$ is idempotent, i.e. $\overline{\mathsf{V}}(\overline{\mathsf{V}}(x))=\overline{\mathsf{V}}(x)$, for all $x$;
    \item \label{vcl_prop6}$\overline{\mathsf{V}}(x)\in \overline{\mathsf{V}}(y)$, or $\overline{\mathsf{V}}(x)=\overline{\mathsf{V}}(y)$, or $\overline{\mathsf{V}}(y)\in \overline{\mathsf{V}}(x)$, for all  $x,y$.
\end{enumerate}
\end{lem}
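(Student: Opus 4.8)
The plan is to derive all six items from the defining axiom for $\overline{\mathsf{V}}$, which in both theories reads $y\in\overline{\mathsf{V}}(x)\leftrightarrow(\exists z\in x)\,y\subseteq\overline{\mathsf{V}}(z)$, together with $\varepsilon$-induction: this is available in full in $\mathsf{H}^{\omega}$ by Lemma \ref{foundation_in_H^omega}, and for $\Delta_0^{\mathsf{set}}(\mathcal{P},\overline{\mathsf{V}})$-formulas in $\mathsf{EA}^{\mathsf{set}}$ by the adduction-induction remark. Item \ref{vcl_prop1} is an $\varepsilon$-induction on $x$: if every $w\in x$ satisfies $w\subseteq\overline{\mathsf{V}}(w)$, then for $w\in x$ the defining axiom with $z=w$ gives $w\in\overline{\mathsf{V}}(x)$. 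Item \ref{vcl_prop2} is again an $\varepsilon$-induction on $x$: for $y\in\overline{\mathsf{V}}(x)$ pick $z\in x$ with $y\subseteq\overline{\mathsf{V}}(z)$; any $w\in y$ lies in $\overline{\mathsf{V}}(z)$, which is transitive by the induction hypothesis, so $w\subseteq\overline{\mathsf{V}}(z)$ and hence $w\in\overline{\mathsf{V}}(x)$. Item \ref{vcl_prop3} needs no induction: if $y\in\overline{\mathsf{V}}(x)$, say $y\subseteq\overline{\mathsf{V}}(z)$ with $z\in x$, then every $u\subseteq y$ also satisfies $u\subseteq\overline{\mathsf{V}}(z)$, so $u\in\overline{\mathsf{V}}(x)$. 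Item \ref{vcl_prop4} is an $\varepsilon$-induction on $x$ using \ref{vcl_prop3}: for $y\in\overline{\mathsf{V}}(x)$ fix $z\in x$ with $y\subseteq\overline{\mathsf{V}}(z)$ and show $\overline{\mathsf{V}}(y)\subseteq\overline{\mathsf{V}}(z)$; each $u\in\overline{\mathsf{V}}(y)$ satisfies $u\subseteq\overline{\mathsf{V}}(v)$ for some $v\in y\subseteq\overline{\mathsf{V}}(z)$, and $\overline{\mathsf{V}}(v)\in\overline{\mathsf{V}}(z)$ by the induction hypothesis, so $u\in\overline{\mathsf{V}}(z)$ by \ref{vcl_prop3}; then $z\in x$ yields $\overline{\mathsf{V}}(y)\in\overline{\mathsf{V}}(x)$. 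Item \ref{vcl_prop5} follows from \ref{vcl_prop1}, \ref{vcl_prop3}, \ref{vcl_prop4}: we have $\overline{\mathsf{V}}(x)\subseteq\overline{\mathsf{V}}(\overline{\mathsf{V}}(x))$ by \ref{vcl_prop1}, and conversely any $y\in\overline{\mathsf{V}}(\overline{\mathsf{V}}(x))$ lies below some $\overline{\mathsf{V}}(z)$ with $z\in\overline{\mathsf{V}}(x)$, whence $\overline{\mathsf{V}}(z)\in\overline{\mathsf{V}}(x)$ by \ref{vcl_prop4} and $y\in\overline{\mathsf{V}}(x)$ by \ref{vcl_prop3}; extensionality closes it.

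Before \ref{vcl_prop6} I would record two equivalences that translate rank comparison into inclusion/membership of the base sets, both consequences of \ref{vcl_prop1}--\ref{vcl_prop4} and the defining axiom: (a) $\overline{\mathsf{V}}(x)\subseteq\overline{\mathsf{V}}(y)\leftrightarrow x\subseteq\overline{\mathsf{V}}(y)$ and (b) $\overline{\mathsf{V}}(x)\in\overline{\mathsf{V}}(y)\leftrightarrow x\in\overline{\mathsf{V}}(y)$. For (a) the forward direction uses $x\subseteq\overline{\mathsf{V}}(x)$, while the backward direction takes $u\in\overline{\mathsf{V}}(x)$, finds $z\in x\subseteq\overline{\mathsf{V}}(y)$ with $u\subseteq\overline{\mathsf{V}}(z)$, and concludes $u\in\overline{\mathsf{V}}(y)$ from $\overline{\mathsf{V}}(z)\in\overline{\mathsf{V}}(y)$ (item \ref{vcl_prop4}) and \ref{vcl_prop3}. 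Then (b) is immediate from (a) and the defining axiom, since both sides unfold to $(\exists z\in y)\,x\subseteq\overline{\mathsf{V}}(z)$. Together with transitivity \ref{vcl_prop2} these give the expected dictionary, in particular $\overline{\mathsf{V}}(x)\in\overline{\mathsf{V}}(y)\Rightarrow\overline{\mathsf{V}}(x)\subseteq\overline{\mathsf{V}}(y)$.

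For \ref{vcl_prop6}, which I expect to be the main obstacle, I would prove the equivalent trichotomy $\theta(x,y):\ x\in\overline{\mathsf{V}}(y)\lor\overline{\mathsf{V}}(x)=\overline{\mathsf{V}}(y)\lor y\in\overline{\mathsf{V}}(x)$ for all $x,y$ by a double $\varepsilon$-induction, outer on $x$ and inner on $y$. In the inductive step one splits on whether $x\subseteq\overline{\mathsf{V}}(y)$ and whether $y\subseteq\overline{\mathsf{V}}(x)$. If both hold, (a) forces $\overline{\mathsf{V}}(x)=\overline{\mathsf{V}}(y)$. If $x\subseteq\overline{\mathsf{V}}(y)$ but some $b\in y$ has $b\notin\overline{\mathsf{V}}(x)$, the inner hypothesis $\theta(x,b)$ gives $x\subseteq\overline{\mathsf{V}}(b)$, so $x\in\overline{\mathsf{V}}(y)$; the symmetric case $y\subseteq\overline{\mathsf{V}}(x)$ with a witness $a\in x$, $a\notin\overline{\mathsf{V}}(y)$ is handled by the outer hypothesis $\theta(a,y)$, giving $y\in\overline{\mathsf{V}}(x)$. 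Finally, if neither inclusion holds, witnesses $a\in x\setminus\overline{\mathsf{V}}(y)$ and $b\in y\setminus\overline{\mathsf{V}}(x)$ together with $\theta(a,y)$ force $b\in\overline{\mathsf{V}}(a)\subseteq\overline{\mathsf{V}}(x)$, a contradiction.

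The genuine difficulty in \ref{vcl_prop6} is threefold. First, one must upgrade an inclusion of ranks to a membership (the step ``$x\subseteq\overline{\mathsf{V}}(y)$ and $\overline{\mathsf{V}}(x)\neq\overline{\mathsf{V}}(y)$ imply $x\in\overline{\mathsf{V}}(y)$''); a single $\varepsilon$-induction on $x$ with $y$ fixed fails at exactly this point, because it compares $x$ only with $y$ and not with the elements of $y$, which is what forces the nested induction. Second, since pairing is not yet available at this stage of the bootstrap, one cannot collapse the nested induction into a single $\varepsilon$-induction on ordered pairs. Third, and this is the step I expect to require the most care, in $\mathsf{EA}^{\mathsf{set}}$ the outer induction formula $\forall y\,\theta(x,y)$ is $\Pi_1^{\mathsf{set}}(\mathcal{P},\overline{\mathsf{V}})$ rather than $\Delta_0^{\mathsf{set}}(\mathcal{P},\overline{\mathsf{V}})$, so the available $\Delta_0^{\mathsf{set}}(\mathcal{P},\overline{\mathsf{V}})$-$\varepsilon$-induction does not immediately apply. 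For $\mathsf{H}^{\omega}$ this causes no trouble, as Lemma \ref{foundation_in_H^omega} gives unrestricted $\varepsilon$-induction. For $\mathsf{EA}^{\mathsf{set}}$ I would first lift $\Delta_0^{\mathsf{set}}(\mathcal{P},\overline{\mathsf{V}})$-$\varepsilon$-induction to its $\Pi_1^{\mathsf{set}}(\mathcal{P},\overline{\mathsf{V}})$ form by a standard bounding argument, exploiting that each $\overline{\mathsf{V}}(x)$ is a transitive set closed under subsets and under $\overline{\mathsf{V}}$ (items \ref{vcl_prop2}--\ref{vcl_prop4}), so that the relevant instances of the unbounded quantifier over $y$ can be confined to a single level.
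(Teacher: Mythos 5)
Your items \ref{vcl_prop1}--\ref{vcl_prop5} are correct and essentially identical to the paper's own argument (same inductions, same order of deduction), and your auxiliary dictionary (a)--(b) is a sound consequence of \ref{vcl_prop1}--\ref{vcl_prop4}. Your case analysis for the trichotomy $\theta(x,y)$ is also mathematically sound, and in $\mathsf{H}^{\omega}$ the double $\varepsilon$-induction is legitimate: Lemma \ref{foundation_in_H^omega} together with comprehension gives $\varepsilon$-induction for arbitrary formulas, so for that theory your route is a valid (and somewhat different) alternative to the paper's.

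The genuine gap is the $\mathsf{EA}^{\mathsf{set}}$ half of item \ref{vcl_prop6}, at exactly the point you flagged but did not resolve. The outer induction formula $\forall y\,\theta(x,y)$ is $\Pi_1^{\mathsf{set}}(\mathcal{P},\overline{\mathsf{V}})$, and the proposed ``standard bounding argument'' does not exist: the quantified $y$ ranges over sets of arbitrarily large rank, so no term in $x$ can bound it, and deciding whether a given $y$ of large rank can be ignored requires knowing how $\overline{\mathsf{V}}(y)$ compares with the levels below $\overline{\mathsf{V}}(x)$ --- which is precisely the trichotomy being proved, so the bounding is circular. Nor can one expect the full scheme of $\Pi_1^{\mathsf{set}}(\mathcal{P},\overline{\mathsf{V}})$-$\varepsilon$-induction in $\mathsf{EA}^{\mathsf{set}}$: under the bi-interpretation with $\mathsf{EA}$ it amounts to finding $\in_{\mathsf{Ack}}$-minimal solutions of $\Sigma_1$ properties, a minimization principle going beyond the $\Delta_0$-induction that $\mathsf{EA}$ provides. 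The paper circumvents all of this by choosing a different induction statement, one that is $\Delta_0^{\mathsf{set}}(\overline{\mathsf{V}})$ from the start: it proves by a \emph{single} $\varepsilon$-induction on $x$ that if $x=\overline{\mathsf{V}}(x)$ then trichotomy holds for all pairs $y,z\in x$ (all quantifiers bounded by $x$), where the induction step first establishes a subset-comparison lemma (if $y\in\overline{\mathsf{V}}(x)$ and $z\subseteq y$ then $\overline{\mathsf{V}}(z)=\overline{\mathsf{V}}(y)$ or $\overline{\mathsf{V}}(z)\in\overline{\mathsf{V}}(y)$) using the inductive hypothesis applied to the level $\overline{\mathsf{V}}(y)$; the unrestricted claim \ref{vcl_prop6} is then obtained not by placing $x$ and $y$ inside a common level (which the pairing-free bootstrap cannot do) but by comparing both with their common \emph{subset} $w=\overline{\mathsf{V}}(\overline{\mathsf{V}}(x)\cap\overline{\mathsf{V}}(y))$ and excluding the bad case via $w\in w$. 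To repair your $\mathsf{EA}^{\mathsf{set}}$ argument you would have to restructure it along these lines; the patch as you state it cannot be carried out.
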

\begin{proof} We will prove Claims \ref{vcl_prop1}--\ref{vcl_prop6} just from extensionality, defining axiom for $\overline{\mathsf{V}}$, and the scheme of $\varepsilon$-induction for $\Delta_0(\overline{\mathsf{V}})$-formulas.

We establish  Claims \ref{vcl_prop1}. and \ref{vcl_prop2}. by straightforward $\varepsilon$-induction arguments on $x$. Claim  \ref{vcl_prop3}. follows from transitivity of $\subseteq$-relation and defining axiom for $\overline{\mathsf{V}}$.

Let us prove \ref{vcl_prop4}. by $\varepsilon$-induction on $x$. To justify the step of induction we need to show that for a given $y\in\overline{\mathsf{V}}(x)$ we have $\overline{\mathsf{V}}(y)\in \overline{\mathsf{V}}(x)$ under the assumption that for all $z\in x$ the sets $\overline{\mathsf{V}}(z)$ are closed under $\overline{\mathsf{V}}$. Indeed, we fix $z\in x$ such that $y\subseteq \overline{\mathsf{V}}(z)$. By induction assumption $\overline{\mathsf{V}}(w)\in z$, for all $w\in y$. Since $\overline{\mathsf{V}}(z)$ is closed under subsets, $\mathcal{P}(\overline{\mathsf{V}}(w))\subseteq \overline{\mathsf{V}}(z)$, for all $w\in y$. Thus $\overline{\mathsf{V}}(y)\subseteq \overline{\mathsf{V}}(z)$. And finally we conclude that $\overline{\mathsf{V}}(y)\in \overline{\mathsf{V}}(x)$. 

Let us prove \ref{vcl_prop5} by showing that $\overline{\mathsf{V}}(\overline{\mathsf{V}}(x))\supseteq \overline{\mathsf{V}}(x)$ and $\overline{\mathsf{V}}(\overline{\mathsf{V}}(x))\subseteq \overline{\mathsf{V}}(x)$. By \ref{vcl_prop1}. we have $\overline{\mathsf{V}}(\overline{\mathsf{V}}(x))\supseteq \overline{\mathsf{V}}(x)$. To show that $\overline{\mathsf{V}}(\overline{\mathsf{V}}(x))\subseteq \overline{\mathsf{V}}(x)$ we consider any $y\in\overline{\mathsf{V}}(\overline{\mathsf{V}}(x))$ and prove that $y\in \overline{\mathsf{V}}(x)$. By defining axiom for $\overline{\mathsf{V}}$ we have $y\subseteq \overline{\mathsf{V}}(z)$, for some $z\in \overline{\mathsf{V}}(x)$. By \ref{vcl_prop4}.  the set $\overline{\mathsf{V}}(z)\in\overline{\mathsf{V}}(x)$. Combining this with \ref{vcl_prop3}. we conclude that $y\in \overline{\mathsf{V}}(x)$, which concludes the proof of \ref{vcl_prop5}.

Finally, let us prove \ref{vcl_prop6}.  For this it is enough to show that for all $x$ and all  $y,z\in \overline{\mathsf{V}}(x)$ we have either $\overline{\mathsf{V}}(y)\in \overline{\mathsf{V}}(z)$, or $\overline{\mathsf{V}}(y)= \overline{\mathsf{V}}(z)$, or $\overline{\mathsf{V}}(z)\in \overline{\mathsf{V}}(y)$. By \ref{vcl_prop5}. it is enough to consider only the case of $x=\overline{\mathsf{V}}(x)$. We prove by $\varepsilon$-induction on $x$ that if $x=\overline{\mathsf{V}}(x)$ then for all  $y,z\in \overline{\mathsf{V}}(x)$ we have either $\overline{\mathsf{V}}(y)\in \overline{\mathsf{V}}(z)$, or $\overline{\mathsf{V}}(y)= \overline{\mathsf{V}}(z)$, or $\overline{\mathsf{V}}(z)\in \overline{\mathsf{V}}(y)$. Further we justify the step of this induction. 

We show that for all $y\in \overline{\mathsf{V}}(x)$ and $z\subseteq y$ either $\overline{\mathsf{V}}(z)=\overline{\mathsf{V}}(y)$ or $\overline{\mathsf{V}}(z)\in \overline{\mathsf{V}}(y)$. From defining axiom for $\overline{\mathsf{V}}$ it follows that $\overline{\mathsf{V}}(z)\subseteq\overline{\mathsf{V}}(y)$. Hence it is enough to prove that if $\overline{\mathsf{V}}(z)\ne\overline{\mathsf{V}}(y)$ then $\overline{\mathsf{V}}(z)\in\overline{\mathsf{V}}(y)$. For this we fix any $w\in \overline{\mathsf{V}}(y)\setminus \overline{\mathsf{V}}(z)$ and claim that $\overline{\mathsf{V}}(z)\subseteq \overline{\mathsf{V}}(w)$; since $\overline{\mathsf{V}}(w)\in \overline{\mathsf{V}}(y)$, the claim will imply that $\overline{\mathsf{V}}(z)\in\overline{\mathsf{V}}(y)$. To prove the claim we consider any $v\in \overline{\mathsf{V}}(z)$ and show that $v\in\overline{\mathsf{V}}(w)$.  Since $v,w\in \overline{\mathsf{V}}(y)$, by induction assumption for $\overline{\mathsf{V}}(y)$ either $\overline{\mathsf{V}}(v)\in\overline{\mathsf{V}}(w)$, or $\overline{\mathsf{V}}(v)=\overline{\mathsf{V}}(w)$, or $\overline{\mathsf{V}}(w)\in\overline{\mathsf{V}}(v)$. To finish the proof of the claim we just need to rule out the last two cases. Assume for contradiction that $\overline{\mathsf{V}}(v)=\overline{\mathsf{V}}(w)$. Then by \ref{vcl_prop4}. we have $\overline{\mathsf{V}}(w)\in\overline{\mathsf{V}}(z)$. And thus by combination of \ref{vcl_prop1}. and \ref{vcl_prop3}. we should have $w\in\overline{\mathsf{V}}(z)$, contradiction. Now assume for contradiction that $\overline{\mathsf{V}}(w)\in\overline{\mathsf{V}}(v)$.  By combination of \ref{vcl_prop4}. and \ref{vcl_prop2}. we have $\overline{\mathsf{V}}(w)\in\overline{\mathsf{V}}(z)$. Which again leads to a contradiction.

 Let us consider any $y,z\in \overline{\mathsf{V}}(x)$ and show that either $\overline{\mathsf{V}}(y)\in \overline{\mathsf{V}}(z)$, or $\overline{\mathsf{V}}(y)= \overline{\mathsf{V}}(z)$, or $\overline{\mathsf{V}}(z)\in \overline{\mathsf{V}}(y)$. Let $w=\overline{\mathsf{V}}(\overline{\mathsf{V}}(y)\cap \overline{\mathsf{V}}(z))$. Using the fact that we established above we see that either $w= \overline{\mathsf{V}}(y)$ and  $w= \overline{\mathsf{V}}(z)$,  or $w= \overline{\mathsf{V}}(y)$ and $w\in \overline{\mathsf{V}}(z)$, or $w\in \overline{\mathsf{V}}(y)$ and  $w= \overline{\mathsf{V}}(z)$,  or $w\in \overline{\mathsf{V}}(y)$ and $w\in \overline{\mathsf{V}}(z)$. Clearly to finish the proof it is enough to rule out the last case. Assume for a contradiction that $w\in \overline{\mathsf{V}}(y)$ and $w\in \overline{\mathsf{V}}(z)$. We observe that $w\in \overline{\mathsf{V}}(y)\cap \overline{\mathsf{V}}(z)=w$. But by $\varepsilon$-induction it is easy to prove that no set could be its own element.
\end{proof}

Using the established properties of $\overline{\mathsf{V}}$ it is easy to prove the usual set existence axioms: axiom of pair, axiom of union, and the axiom of transitive containment. This allow us to prove in a completely standard fashion that for any two sets $x,y$ there exists the Kuratowski ordered pair $\langle x,y\rangle=\{\{x\},\{x,y\}\}$. For two sets we construct their Cartesian product $X\times Y =\{\langle x,y\rangle \mid x\in X, y\in Y\}$ as a subset of either $\mathcal{P}^2(\overline{\mathsf{V}}(X))$ or $\mathcal{P}^2(\overline{\mathsf{V}}(Y))$.  Thus we could work with binary relations $R\subseteq X\times Y$ and partial functions $f\colon X\to Y$ in a standard fashion.

\subsection{$\mathsf{H}^{\omega}$ and $\mathsf{EA}^{\mathsf{set}}$}

In theory $\mathsf{H}^{\omega}$ we define the class of hereditarily finite sets $\mathsf{HF}$ to be $\bigcup\limits_{n\in \mathsf{Nat}}\overline{\mathsf{V}}(n)$, i.e. that $\mathsf{HF}$ is the union of all the finite levels of von Neumann hierarchy.

\begin{prop} \label{H_and_EA^set}If $\varphi$ is  $\Pi_1^{\mathsf{set}}(\overline{\mathsf{V}})$ sentence, then
$$\mathsf{H}^{\omega}\vdash \varphi^{\mathsf{HF}}\iff \mathsf{EA}^{\mathsf{set}}\vdash \varphi.$$
\end{prop}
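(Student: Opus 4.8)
The plan is to prove the two implications separately, after a common reduction. Writing $\varphi=\forall\vec{x}\,\psi(\vec{x})$ with $\psi\in\Delta_0^{\mathsf{set}}(\overline{\mathsf{V}})$, I would first record that $\mathsf{HF}$ is transitive and closed under $\overline{\mathsf{V}}$: this is immediate from the definition $\mathsf{HF}=\bigcup_{n\in\mathsf{Nat}}\overline{\mathsf{V}}(n)$ together with Claims \ref{vcl_prop2} and \ref{vcl_prop4} of Lemma \ref{vcl}. Hence for $\vec{a}\in\mathsf{HF}$ every $\overline{\mathsf{V}}$-term occurring as a quantifier bound in $\psi(\vec{a})$ evaluates to an element, and a subset, of $\mathsf{HF}$, so relativizing the bounded quantifiers of $\psi$ to $\mathsf{HF}$ is vacuous. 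Thus $\mathsf{H}^{\omega}$ proves $\psi^{\mathsf{HF}}(\vec{a})\mathrel{\leftrightarrow}\psi(\vec{a})$ for $\vec{a}\in\mathsf{HF}$, and the whole problem reduces to comparing $\mathsf{H}^{\omega}\vdash\forall\vec{x}\in\mathsf{HF}\;\psi(\vec{x})$ with $\mathsf{EA}^{\mathsf{set}}\vdash\forall\vec{x}\;\psi(\vec{x})$.

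For the direction $\mathsf{EA}^{\mathsf{set}}\vdash\varphi\Rightarrow\mathsf{H}^{\omega}\vdash\varphi^{\mathsf{HF}}$ I would interpret $\mathsf{EA}^{\mathsf{set}}$ inside $\mathsf{H}^{\omega}$, taking the universe to be $\mathsf{HF}$, membership to be $\in$, and $\overline{\mathsf{V}}$ to be its restriction. The only problematic axiom is totality of $\mathcal{P}$. When $\mathsf{Nat}$ has no maximum, $\mathsf{HF}$ is closed under power set and each axiom of $\mathsf{EA}^{\mathsf{set}}$ relativizes provably: $\Delta_0^{\mathsf{set}}(\mathcal{P},\overline{\mathsf{V}})$-separation and adduction induction come from $\mathsf{H}^{\omega}$-separation and the $\varepsilon$-induction of Lemma \ref{foundation_in_H^omega}, while the two defining axioms are literally the same. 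When $\mathsf{Nat}$ has a maximum $N$, so that $\mathsf{HF}=\overline{\mathsf{V}}(N)$, power sets of the top levels escape $\mathsf{HF}$, and here I would exploit the type hierarchy to continue the cumulative hierarchy above $N$: type-$j$ objects over $\overline{\mathsf{V}}(N)$ play the role of the missing levels $N+1,\dots,N+j$. Since any fixed $\mathsf{EA}^{\mathsf{set}}$-proof of $\varphi$ nests $\mathcal{P}$ only boundedly, finitely many types suffice to carry that proof out in this extended structure, and a $\Delta_0^{\mathsf{set}}(\overline{\mathsf{V}})$-absoluteness argument between $\overline{\mathsf{V}}(N)$ and the extended structure transfers $\psi(\vec{a})$ back down to $\mathsf{HF}$.

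For the converse $\mathsf{H}^{\omega}\vdash\varphi^{\mathsf{HF}}\Rightarrow\mathsf{EA}^{\mathsf{set}}\vdash\varphi$ I would argue by contraposition, building from a model $M\models\mathsf{EA}^{\mathsf{set}}+\lnot\varphi$ a model $\mathcal{N}\models\mathsf{H}^{\omega}$ whose class of hereditarily finite sets is exactly $M$, so that $\mathcal{N}\models\lnot\varphi^{\mathsf{HF}}$. I would let the type-$0$ universe of $\mathcal{N}$ be $M$ itself (every set being hereditarily finite is consistent with $\mathsf{H}^{\omega}$, which does not prove existence of infinite sets) and build the higher types as a Henkin hierarchy. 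The delicate point is that $\mathsf{H}^{\omega}$-comprehension forces every definable subclass of $M$ into type $1$, whereas $\mathsf{H}^{\omega}$-separation demands that each such class meet every $M$-set in an $M$-set, which is not automatic since $M$ enjoys only $\Delta_0^{\mathsf{set}}(\mathcal{P},\overline{\mathsf{V}})$-separation. I would resolve this by a satisfaction argument showing that, relative to any fixed $M$-set, quantification over the Henkin higher types collapses to $\mathsf{EA}^{\mathsf{set}}$-definable quantification over iterated power sets inside $M$, so that every class produced by comprehension is $M$-amenable and separation holds in $\mathcal{N}$.

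The main obstacle is precisely the reconciliation of the totality of $\mathcal{P}$ in $\mathsf{EA}^{\mathsf{set}}$ with its possible failure on $\mathsf{HF}$ in $\mathsf{H}^{\omega}$, stemming from the non-provability of totality of successor. In the forward direction this is what forces the detour through the type hierarchy and the bounded-$\mathcal{P}$-nesting bookkeeping; in the backward direction it reappears as the requirement that the Henkin higher types respect $\mathsf{H}^{\omega}$-separation over a possibly nonstandard $M$, which is the step I expect to demand the most care.
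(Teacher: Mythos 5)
Your plan for the direction $\mathsf{EA}^{\mathsf{set}}\vdash\varphi\Rightarrow\mathsf{H}^{\omega}\vdash\varphi^{\mathsf{HF}}$ contains a fatal gap, and it is exactly at the point you flag as ``the main obstacle''. You propose, in the case where $\mathsf{Nat}$ has a maximum, to carry a fixed $\mathsf{EA}^{\mathsf{set}}$-proof $p$ out inside a structure built from finitely many types over $\overline{\mathsf{V}}(N)$, arguing that the bounded nesting of $\mathcal{P}$ in the terms of $p$ makes finitely many types suffice. This cannot work: the axioms occurring in $p$, in particular the defining axiom for the \emph{function symbol} $\mathcal{P}$, are universally quantified over the whole domain of the interpreting structure, so their truth requires $\mathcal{P}$ to be total on that domain, no matter how shallow the terms of $p$ are. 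And no structure definable in $\mathsf{H}^{\omega}$ can provide this. Concretely, $\mathsf{H}^{\omega}$ has models in which every sort is finite (take any finite level $V_k$ with the full finite-type hierarchy over it; each sort is then a finite set and all axioms of $\mathsf{H}^{\omega}$ hold), and any interpretation, being given by formulas, mentions only finitely many sorts, so in such a model its domain is finite. But extensionality plus the defining axiom for $\mathcal{P}$ make $\mathcal{P}$ injective with $x\in\mathcal{P}(x)$, so in a finite structure $\mathcal{P}$ would be a permutation and would produce an $\in$-cycle, which the instances of foundation ($\varepsilon$-induction, derivable from adduction induction) occurring in typical proofs rule out. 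Hence for any $\mathsf{EA}^{\mathsf{set}}$-proof using these axioms there is provably \emph{no} internal interpretation in $\mathsf{H}^{\omega}$ validating them. This is precisely why the paper proves this direction semantically: from a model of $\mathsf{H}^{\omega}+\lnot\varphi^{\mathsf{HF}}$ it first restricts to the subsets of $\overline{\mathsf{V}}(s)$ and then builds $\mathfrak{M}^u\models\mathsf{EA}^{\mathsf{set}}+\lnot\varphi$ as the union of the collapses of \emph{all} infinitely many types --- a union taken outside the model, which no formula of the higher-order language can define. The asymmetry is essential: internalizing a proof works from $\mathsf{H}^{\omega}$ into $\mathsf{EA}^{\mathsf{set}}$ (where $\mathcal{P}$ is total, so the finitely many type domains needed exist as sets), but not in the reverse direction.

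Your other direction ($\mathsf{H}^{\omega}\vdash\varphi^{\mathsf{HF}}\Rightarrow\mathsf{EA}^{\mathsf{set}}\vdash\varphi$, by contraposition) is a genuinely different, semantic route compared to the paper's proof-theoretic one, but as stated it is also not secured. Taking the type-$0$ universe of $\mathcal{N}$ to be all of $M$ forces, via comprehension, proper classes of $M$ (such as $\mathsf{V}$ and $\mathsf{Nat}$) into type $1$; quantification over type $1$ is then genuinely third-order over $M$, and your ``satisfaction argument'' does not explain why the comprehension-closure of such classes stays $M$-amenable --- comprehension in $\mathsf{H}^{\omega}$ is impredicative, so it cannot be handled by a ramified definability hierarchy. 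The repair is to give up on making $\mathsf{HF}^{\mathcal{N}}$ equal to all of $M$: fix witnesses $\vec{a}$ of $\lnot\varphi$ in $M$, fix an $M$-set $s$ containing them together with a natural number of $M$ bounding their ranks, and let the type-$i$ domain of $\mathcal{N}$ consist of the $M$-elements of $\mathcal{P}^{i+1}(\overline{\mathsf{V}}(s))$, with all memberships interpreted by $\in^{M}$. Then every type domain is (the extension of) an $M$-set, so every instance of comprehension and separation for $\mathcal{N}$ becomes an instance of $\Delta_0^{\mathsf{set}}(\mathcal{P},\overline{\mathsf{V}})$-separation in $M$ with the bounds $\mathcal{P}^{j+1}(\overline{\mathsf{V}}(s))$ as terms, and $\Delta_0^{\mathsf{set}}(\overline{\mathsf{V}})$-absoluteness gives $\mathcal{N}\models\lnot\varphi^{\mathsf{HF}}$. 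Note that this fix is exactly the mirror image of the paper's restriction step ($\mathfrak{M}\mapsto\mathfrak{M}^{f}$) on the other side of its proof; with it, this half of your argument goes through, but the first half still needs to be replaced by an argument of the paper's type.
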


In order to prove Proposition \ref{H_and_EA^set} it will be useful to use an alternative axiomatization of $\mathsf{EA}^{\mathsf{set}}$.
\begin{lem}\label{EA_set_alt} The following axioms give an alternative axiomatization of $\mathsf{EA}^{\mathsf{set}}$:
\begin{enumerate}
    \item Extensionality;
    \item $\Delta_0^{\mathsf{set}}(\overline{\mathsf{V}})$-Separation;
    \item Defining axiom for $\mathcal{P}$;
    \item Defining axiom for $\overline{\mathsf{V}}$;
    \item $x=\emptyset\lor \exists y\in x\forall z\in x\; z\not\in y$ (Regularity);
    \item $x\subseteq \overline{\mathsf{V}}(x)\land \overline{\mathsf{V}}(x)\subseteq \mathcal{P}(\overline{\mathsf{V}}(x))$ (operation $\overline{\mathsf{V}}$ maps any $x$ to a transitive set containing $x$);
    \item $x=\emptyset\lor \exists y\;(\overline{\mathsf{V}}(x)=\overline{\mathsf{V}}(\mathcal{P}(y)))$ (every non-empty set lies in a successor level of von Neumann hierarchy).
\end{enumerate}
\end{lem}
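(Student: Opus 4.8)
The plan is to prove the two axiomatizations deductively equivalent by establishing mutual derivability. Extensionality and the defining axioms for $\mathcal{P}$ and $\overline{\mathsf{V}}$ are literally common to both lists, so only the remaining axioms need attention. For the direction $\mathsf{EA}^{\mathsf{set}}\vdash(\text{alternative axioms})$ I would argue as follows. The scheme of $\Delta_0^{\mathsf{set}}(\overline{\mathsf{V}})$-Separation is a special case of $\Delta_0^{\mathsf{set}}(\mathcal{P},\overline{\mathsf{V}})$-Separation and hence immediate. Regularity follows from the $\varepsilon$-induction scheme (already noted to be a consequence of Adduction Induction) by the standard minimal-element argument applied to the $\Delta_0$ formula $(\exists y\in x)(\forall z\in y)\,z\notin x$. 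The transitivity axiom $x\subseteq\overline{\mathsf{V}}(x)\land\overline{\mathsf{V}}(x)\subseteq\mathcal{P}(\overline{\mathsf{V}}(x))$ is just a reformulation of Claims \ref{vcl_prop1} and \ref{vcl_prop2} of Lemma \ref{vcl} (transitivity of $\overline{\mathsf{V}}(x)$ says exactly that each of its elements is one of its subsets, i.e. lies in $\mathcal{P}(\overline{\mathsf{V}}(x))$). Finally, the successor-level axiom is proved by Adduction Induction on the $\Delta_0^{\mathsf{set}}(\mathcal{P},\overline{\mathsf{V}})$ formula $\psi(x)\equiv x=\emptyset\lor(\exists y\in\overline{\mathsf{V}}(x))\,\overline{\mathsf{V}}(x)=\overline{\mathsf{V}}(\mathcal{P}(y))$: using the linearity of $\overline{\mathsf{V}}$-values (Claim \ref{vcl_prop6}) the finite set $\{\overline{\mathsf{V}}(z)\mid z\in x\}$ has an $\in$-greatest element $\overline{\mathsf{V}}(z_0)$, whence $\overline{\mathsf{V}}(x)=\mathcal{P}(\overline{\mathsf{V}}(z_0))=\overline{\mathsf{V}}(\mathcal{P}(\overline{\mathsf{V}}(z_0)))$ by idempotence (Claim \ref{vcl_prop5}), with the witness $y=\overline{\mathsf{V}}(z_0)\in\overline{\mathsf{V}}(x)$ by Claim \ref{vcl_prop4}.

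For the converse direction I would first bootstrap $\varepsilon$-induction inside the alternative theory. The transitivity axiom gives transitive containment ($\overline{\mathsf{V}}(x)$ is a transitive set with $x\subseteq\overline{\mathsf{V}}(x)$), and together with Regularity and $\Delta_0^{\mathsf{set}}(\overline{\mathsf{V}})$-Separation this yields $\varepsilon$-induction for $\Delta_0^{\mathsf{set}}(\overline{\mathsf{V}})$ formulas by the usual argument (separate the counterexamples inside $\overline{\mathsf{V}}(a)$ and take a Regularity-minimal one). Since Lemma \ref{vcl} was established from exactly extensionality, the defining axiom for $\overline{\mathsf{V}}$, and $\Delta_0(\overline{\mathsf{V}})$-$\varepsilon$-induction, all its conclusions — and hence pairing, union and the Kuratowski pair machinery — become available. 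The next step is to upgrade separation to the full $\Delta_0^{\mathsf{set}}(\mathcal{P},\overline{\mathsf{V}})$ scheme. Here the point is that $\mathcal{P}$ is total, so every value of a $\mathcal{P},\overline{\mathsf{V}}$-term denotes an honest set; given an instance $\{z\in x\mid\varphi(z)\}$ I would introduce as auxiliary parameters the values of the finitely many power-set levels $\mathcal{P}^{(k)}(\overline{\mathsf{V}}(x))$ that uniformly bound all $\mathcal{P},\overline{\mathsf{V}}$-terms occurring as quantifier bounds in $\varphi$ for arguments ranging over $x$ and its members, rewrite each bounded quantifier $(\forall w\in t)\theta$ as $(\forall w\in p)(w\in t\to\theta)$ with $p$ one of these parameters, and observe that membership ``$w\in t$'' in any $\mathcal{P},\overline{\mathsf{V}}$-term is itself $\Delta_0^{\mathsf{set}}(\overline{\mathsf{V}})$-expressible via the two defining axioms. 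This turns $\varphi$ into an equivalent $\Delta_0^{\mathsf{set}}(\overline{\mathsf{V}})$ formula with set parameters, to which $\Delta_0^{\mathsf{set}}(\overline{\mathsf{V}})$-Separation applies.

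It remains to recover Adduction Induction, and this is the step I expect to be the main obstacle. The clean way to obtain it is by well-founded induction along the lexicographic measure $\langle\mathrm{rank}(x),|x|\rangle$, where $\mathrm{rank}$ is the $\overline{\mathsf{V}}$-rank and $|x|$ the cardinality: to prove $\forall x\,\varphi(x)$ from $\varphi(\emptyset)$ and closure under adduction, given $x\neq\emptyset$ pick $w\in x$ and set $x'=x\setminus\{w\}$; then $w$ has strictly smaller rank, and $x'$ has the same or smaller rank but one fewer element, so both precede $x$ in the measure, the induction hypothesis yields $\varphi(w)$ and $\varphi(x')$, and closure gives $\varphi(x'\cup\{w\})=\varphi(x)$. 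For this to go through I must know that the measure is well-founded, which reduces to the assertion that every set is finite, i.e. is in bijection with a finite von Neumann ordinal; the well-ordering of ranks themselves is supplied by Regularity together with the successor-level axiom, which force every rank to be $0$ or a successor. The genuinely substantial part is proving finiteness by induction on rank: the step from $\mathsf{V}_\alpha$ to $\mathsf{V}_{\alpha+1}=\mathcal{P}(\mathsf{V}_\alpha)$ requires that the power set of a finite set is finite, together with the size bound $|\mathcal{P}(s)|=2^{|s|}$. This exponential counting is exactly where the elementary-arithmetic strength of $\mathsf{EA}^{\mathsf{set}}$ is consumed, and carrying it out from the alternative axioms (constructing the bijection between $\mathcal{P}(s)$ and $2^{|s|}$ and checking totality of the relevant exponentials) is the heart of the argument; once finiteness and the lexicographic well-ordering are in place, Adduction Induction for $\Delta_0^{\mathsf{set}}(\mathcal{P},\overline{\mathsf{V}})$ formulas follows as above using the already-upgraded separation.
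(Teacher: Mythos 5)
Your first direction and your upgrade of separation both match the paper's proof: regularity via $\varepsilon$-induction, the transitivity axiom via Lemma \ref{vcl} (Claims \ref{vcl_prop1}, \ref{vcl_prop2}), the successor-level axiom by adduction induction using Claim \ref{vcl_prop6}, and the reduction of $\Delta_0^{\mathsf{set}}(\mathcal{P},\overline{\mathsf{V}})$-Separation to $\Delta_0^{\mathsf{set}}(\overline{\mathsf{V}})$-Separation by bounding all terms with iterated powerset levels. The gap is in your last step, the recovery of Adduction Induction. You reduce it to the claim that every set is in bijection with a finite von Neumann ordinal, with the counting fact $|\mathcal{P}(s)|=2^{|s|}$ and ``totality of the relevant exponentials.'' That claim is not provable in $\mathsf{EA}^{\mathsf{set}}$ (hence not in the equivalent alternative axiomatization): the paper says so explicitly in the section on cardinal arithmetic --- $\mathsf{EA}^{\mathsf{set}}$ does not prove that every set is equinumerous to an ordinal, which is exactly why it works with $\mathsf{ZF}$-style cardinals rather than von Neumann ones. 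The reason is that ordinals are sparse in the $\overline{\mathsf{V}}$-hierarchy: the ordinal $n$ first appears at level $n+1$, while level $n$ already has roughly $2^0_n$ elements. Concretely, take a model of $\mathsf{EA}$ in which exponentiation is total but superexponentiation is not, and pass to its Ackermann model of $\mathsf{EA}^{\mathsf{set}}$; for $n$ outside the superexponential cut, the set coded by $2^n-1$ has $n$ elements internally, but no equinumerous ordinal exists, since the von Neumann ordinal $n$ would have code of superexponential size. Equivalently: in $\mathsf{EA}^{\mathsf{set}}$ the ordinal exponentiation $\alpha\longmapsto 2^{\alpha}$ is only a \emph{partial} function (cardinal exponentiation is total via powerset, but that is a different operation on different objects), so the exponential counting your induction on rank requires is precisely what is unavailable. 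Since your whole well-founded measure $\langle\mathrm{rank}(x),|x|\rangle$ rests on this finiteness claim, the induction cannot be run.

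Replacing von Neumann cardinalities by the paper's $\mathsf{ZF}$-style cardinals does not rescue your scheme either: you would then need a least-cardinality principle (well-foundedness of injectability among the subsets of a level), which is itself an induction-type principle that you would have to derive --- circular, given that Adduction Induction is what you are trying to prove. The paper's proof avoids counting altogether. By $\varepsilon$-induction (using the successor-level axiom in the form $\overline{\mathsf{V}}(\mathcal{P}(y))=\mathcal{P}(\overline{\mathsf{V}}(y))$) it constructs, on each level $x=\overline{\mathsf{V}}(x)$, a linear order $\le_x$ that extends $\subseteq$, satisfies $y\le_x\{y\}$, and is well-founded in both directions, defined lexicographically from the order on the previous level: $z_1\le_x z_2$ iff $z_1=z_2$ or $\min_{\le_y}(z_1\bigtriangleup z_2)\notin z_1$. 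Adduction Induction then follows by inducting along $\le_b$ with $b=\mathcal{P}(\overline{\mathsf{V}}(a))$: an adductively progressive $\Delta_0^{\mathsf{set}}(\mathcal{P},\overline{\mathsf{V}})$ property is $\le_b$-progressive, because a non-empty $x\in b$ decomposes as $x'\cup\{w\}$ where $w$ is the $\le_b$-greatest element of $x$ (this uses inverse well-foundedness) and both $w$ and $x'$ strictly precede $x$; then well-foundedness of $\le_b$ together with separation yields the conclusion. This construction of a doubly well-founded order is the key idea your proposal is missing.
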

\begin{proof} First we verify that the initial axiomatization of $\mathsf{EA}^{\mathsf{set}}$ proves all the axioms of the alternative axiomatization that were not present in original axiomatization. We prove regularity in $\mathsf{EA}^{\mathsf{set}}$ by $\varepsilon$-induction. Using Lemma \ref{vcl} we prove in $\mathsf{EA}^{\mathsf{set}}$ that operation $\overline{\mathsf{V}}$ maps any $x$ to a transitive set containing $x$. And using Lemma \ref{vcl} Claim \ref{vcl_prop6} we prove by adduction induction on $x$ that any $x$ is either empty or $\overline{\mathsf{V}}(x)=\overline{\mathsf{V}}(\mathcal{P}(y))$, for some set $y$.

In other direction we need to verify $\Delta_0^{\mathsf{set}}(\overline{\mathsf{V}},\mathcal{P})$-separation and $\Delta_0^{\mathsf{set}}(\overline{\mathsf{V}},\mathcal{P})$-adduction induction in the alternative axiomatization. 

Now let us prove $\Delta_0^{\mathsf{set}}(\overline{\mathsf{V}},\mathcal{P})$-separation. We reason in the alternative axiomatization of $\mathsf{EA}^{\mathsf{set}}$. Consider a set $a$ and a $\Delta_0^{\mathsf{set}}(\overline{\mathsf{V}},\mathcal{P})$-formula $\varphi(x)$. Our goal is to construct set $\{x\in a\mid \varphi(x)\}$. For this we will find $\Delta_0^{\mathsf{set}}(\overline{\mathsf{V}})$-formula $\psi(x)$ with additional parameters such that $\forall x\in a\;(\varphi(x)\mathrel{\leftrightarrow}\psi(x))$. Let $p_1,\ldots,p_n$ be all the parameters of $\varphi(x)$ and let $k$ be the number of $\mathcal{P}$-symbols used in $\varphi$. We consider sets $b_1=\overline{\mathsf{V}}(\mathcal{P}^{k+1}(x)),\ldots, b_n=\overline{\mathsf{V}}(\mathcal{P}^{k+1}(x))$. Observe that for any $x\in a$ the ranges of all bounded quantifiers within $\varphi(x)$ are covered by some set $b_i$. Moreover, for any value $v$ of a term $t(\vec{y})$ from $\varphi(x)$, under a substitution within the range of bounded quantifiers, we will have that $w\in b_i$ and  $\mathcal{P}(w)\subseteq b_i$, for some $b_i$. Now using $b_i$-bounded quantifiers it is easy to transform $\varphi(x)$ to formula $\psi(x)$ with the desired property.

Now in $\mathsf{EA}^{\mathsf{set}}$ combining regularity, $\Delta_0^{\mathsf{set}}(\mathcal{P},\overline{\mathsf{V}})$-separation, and the fact that any set is contained in a transitive set we easily deduce $\varepsilon$-induction for $\Delta_0^{\mathsf{set}}(\mathcal{P},\overline{\mathsf{V}})$ formulas. Hence the alternative axiomatization of $\mathsf{EA}^{\mathsf{set}}$ contains theory that we have used in the proof of Lemma \ref{vcl}. And therefore all the facts about $\overline{\mathsf{V}}$ from Lemma \ref{vcl} are provable in the alternative axiomatization as well. As in the case of $\mathsf{EA}^{\mathsf{set}}$, the variant of Lemma \ref{vcl} for the alternative axiomatization of $\mathsf{EA}^{\mathsf{set}}$ allows us to construct there ordered pairs, Cartesian products and hence freely work with binary relations and functions.

Let us prove by $\varepsilon$-induction in alternative axiomatization of $\mathsf{EA}^{\mathsf{set}}$ that for any set $x$ if $x=\overline{\mathsf{V}}(x)$, then there exists a linear order $\le_x$ on $x$ such that 
\begin{enumerate}
    \item $\le_x$ extends $\subseteq$ relation, \item for any $y$ we have $y\le_x \{y\}$, if $y,\{y\}\in x$, \item the order $\le_x$ is well-founded, \item the order inverse to $\le_x$ is also well-founded.
\end{enumerate}
The case of empty $x$ is trivial. In the case of non-empty $x$ we fix $y$ such that $\overline{\mathsf{V}}(\mathcal{P}(y))=x$. By Lemma \ref{vcl}, $\mathcal{P}(\overline{\mathsf{V}}(y))=x$. Since $y=\overline{\mathsf{V}}(y)$ and $y\in x$, we could use $\varepsilon$-induction assumption for $y$, i.e. the existence of $\le_y$. We define $\le_x$ to be 
$$z_1\le_x z_2\stackrel{\mbox{\footnotesize def}}{\iff} z_1=z_2\mbox{ or }z_1\ne z_2\land \min_{\le_y}(z_1 \bigtriangleup z_2)\not\in z_1.$$ Essentially, $\le_x$ is the lexicographic order induced by $\le_y$. A routine check shows that $\le_x$ have the desired properties.

We derive $\Delta_0^{\mathsf{set}}(\overline{\mathsf{V}},\mathcal{P})$ adduction induction in the alternative axiomatization as follows. Suppose a $\Delta_0^{\mathsf{set}}(\overline{\mathsf{V}},\mathcal{P})$-property $\varphi(x)$ is adductively progressive (i.e. the premise of adduction induction holds for it). Let us fix $a$ and prove $\varphi(a)$. We consider the set $b=\mathcal{P}(\overline{\mathsf{V}}(a))$ and correspondign order $\le_{b}$. Observe that $\varphi(x)$ is progressive for this order, i.e. $\forall x\in b\;(\forall y<_b x\;\varphi(y)\to\varphi(x))$. And since $\varphi(x)$ is  $\Delta_0^{\mathsf{set}}(\mathcal{P},\overline{\mathsf{V}})$-formula, using $\Delta_0^{\mathsf{set}}(\mathcal{P},\overline{\mathsf{V}})$-separation and well-foundedness of $\le_b$ we show that $\forall x\in b\;\varphi(x)$. In particular we have $\varphi(a)$.\end{proof}

\begin{rem} With additional efforts one could show that even if we remove axiom of regularity from axiomatization in Lemma \ref{EA_set_alt}, the resulting system still will be deductively equivalent to $\mathsf{EA}^{\mathsf{set}}$.  A finite axiomatization of $\mathsf{EA}^{\mathsf{set}}$ could be achieved by replacement of the scheme of $\Delta_0^{\mathsf{set}}$-separation by a version of the axioms of rudimentary closure that accounts for $\overline{\mathsf{V}}$-function (for a more usual version of axioms of rudimentary closure see \cite[Section~VII.2]{Sim09}).
\end{rem}

Now we are ready to prove Proposition \ref{H_and_EA^set}.
\begin{proof}
   Suppose $\varphi$ is of the form $\forall \vec{x}\;\psi(\vec{x})$, where $\psi$ is $\Delta_0^{\mathsf{set}}(\overline{\mathsf{V}})$ formula.  Everywhere in the proof we will use the alternative axiomatization of $\mathsf{EA}^{\mathsf{set}}$ form Lemma \ref{EA_set_alt}.
  
  First assume that $\mathsf{H}^{\omega}\vdash \varphi^{\mathsf{HF}}$. We fix some proof $p$ of $\varphi^{\mathsf{HF}}$ in $\mathsf{H}^{\omega}$. We find a natural number $n$ such that all the sorts of objects used in $p$ have indexes $\le n$. Let us reason in $\mathsf{EA}^{\mathsf{set}}$ and prove  $\varphi$. We consider some sets $\vec{a}$ and claim that $\psi(\vec{a})$. We consider any set $b$ such that all $a$'s are elements of $b$. Next we consider model $\mathfrak{M}$ of the signature of $\mathsf{H}^{\omega}$ restricted to the sorts  $\le n$, where the domain $d_0$ of the sort $0$ is $\overline{\mathsf{V}}(b)$, the domain $d_{i+1}$ of a sort $i+1$ is $\mathcal{P}(d_i)$, and all the membership predicates are interpreted as $\in$. In a straightforward manner we carry out the proof $p$ inside $\mathfrak{M}$: we assemble our $\mathsf{EA}^{\mathsf{set}}$ proof from checks that $\mathfrak{M}\models \chi$ for all the (logical or non-logical) axioms $\chi$ that occur in $p$ and then just follow the inference rules that were used in $p$. This way we ensure that $\mathfrak{M}\models \varphi^{\mathsf{HF}}$. By adduction induction we show that all the sets in $\mathfrak{M}$ are elements of  $\mathsf{HF}^{\mathfrak{M}}$. Hence $\mathfrak{M}\models \varphi$ and thus $\mathfrak{M}\models \psi(\vec{a})$. Observe that since the first-order part of $\mathfrak{M}$ is a transitive model with the standard interpretation of $\overline{\mathsf{V}}$, the $\Delta_0^{\mathsf{set}}(\overline{\mathsf{V}})$ formula $\psi$ is absolute for $\mathfrak{M}$. Hence we conclude that $\psi(\vec{a})$. 
  
  Let us now assume that $\mathsf{H}^{\omega}\nvdash \varphi^{\mathsf{HF}}$ and show that $\mathsf{EA}^{\mathsf{set}}\nvdash \varphi$. We have a model $\mathfrak{M}$ of $\mathsf{H}^{\omega}$, where $\varphi^{\mathsf{HF}}$ fails.  We will construct a model $\mathfrak{M}^u\models \mathsf{EA}^{\mathsf{set}}+\lnot \varphi$. But first we will construct from $\mathfrak{M}$ a model $\mathfrak{M}^f$ of $\mathsf{H}^{\omega}$ that is a counter-model for $\varphi$ but also  $$\mathfrak{M}^f\models \mathsf{V}=\mathsf{HF}\;\;\;\mbox{ and }\;\;\;\mathfrak{M}^f\models\forall x\; (x\subseteq \overline{\mathsf{V}}(s)),\mbox{ for some $s\in \mathsf{HF}^{\mathfrak{M}^f}$.}$$   
  We fix some $\vec{a}=( a_1,\ldots,a_k)$ consisting of elements of $\mathsf{HF}^{\mathfrak{M}}$ such that $\mathfrak{M}\not\models \psi(\vec{a})$. By Lemma \ref{vcl} there should be $i_0$ such that $\mathfrak{M}\models \overline{\mathsf{V}}(a_i)\subseteq \overline{\mathsf{V}}(a_{i_0})$, for all $j$ from $1$ to $k$. We put $s=a_{i_0}$.  We obtain $\mathfrak{M}^{f}$ from $\mathfrak{M}$ by restricting the sets of type $0$ to $\mathfrak{M}$-subsets of $\overline{\mathsf{V}}(s)$. More formally $\mathfrak{M}^{f}$ is a submodel of $\mathfrak{M}$, where we restrict type $0$ to $\mathfrak{M}$-class $D_0$ consisisting of all $\mathfrak{M}$-subsets of $\overline{\mathsf{V}}(s)$. And where we restrict types $n+1$ to $n+2$-sets $D_{n+1}$ in $\mathfrak{M}$ such that $D_{n+1}=(\mathcal{P}(D_n))^{\mathfrak{M}}$.
  
   We define model $\mathfrak{M}^{u}$ to be the collapse of the higher types in  $\mathfrak{M}^{f}$ to an untyped set structure. Formally we define in $\mathfrak{M}^{f}$ the relations $\in_i$ on the sets of the type $i$: $\in_0$ is just $\in$, and $\in_{i+1}$ is 
 $$a^{(i+1)}\in_{i+1}b^{(i+1)}\defiff (\exists x^{(i)}\ain b^{(i+1)}) \forall y^{(i)}(y^{(i)}\ain a^{(i+1)}\mathrel{\leftrightarrow} y^{(i)}\in_i x^{(i)}).$$ 
For each $i$ we define the model $\mathfrak{M}^u_i$ to be the model of pure set-theoretic signature which domain consists of all the sets of the type $i$ from $\mathfrak{M}$ and $\in$ is interpreted as $\in_i$.  Naturally we have end-embeddings of $\mathfrak{M}^u_i$ into $\mathfrak{M}^u_{i+1}$
$$e_i\colon A^{(i)}\longmapsto \{X^{(i)}\mid X^{(i)}\in_i A^{(i)}\}.$$ 
To simplify our notations we will assume (without loss of generality that the sequence $\mathfrak{M}^u_i$ just form a sequence of expanding models (e.g. $\mathfrak{M}^u_i\subseteq \mathfrak{M}^u_{i+1}$ and for any $a\in \mathfrak{M}^u_i$ we have $e_i(a)=a$). We expand $\mathfrak{M}^u_i$ by total functions $\overline{\mathsf{V}}(x)$ and partial functions $\mathcal{P}(x)$. The function $\overline{\mathsf{V}}$ in $\mathfrak{M}^u_0$ coincides with $\overline{\mathsf{V}}$ from $\mathfrak{M}^f$.  The function $\overline{\mathsf{V}}$ in $\mathfrak{M}^u_{i+1}$ extends $\overline{\mathsf{V}}$ from $\mathfrak{M}^u_i$ by mapping any $a\in \mathfrak{M}^u_{i+1}\setminus  \mathfrak{M}^u_i$ to the $\subseteq$-greatest element of $\mathfrak{M}^u_{i+1}$ (in $\mathfrak{M}^f$ it is the type $i+1$ set consisting of all type $i$ sets).  We define $\mathcal{P}$ in $\mathfrak{M}^u_{i+1}$ for all $a\in \mathfrak{M}^u_{i}$ to be the powerset of $a$ (inside $\mathfrak{M}^u_{i+1}$). It is easy to see that $\mathcal{P}$ from $\mathfrak{M}^u_{i}$ extends $\mathcal{P}$ from $\mathfrak{M}^u_{j}$, for $i>j$. And that for any $a\in \mathfrak{M}^u_i$, the powerset $\mathcal{P}(a)$ is defined in $\mathfrak{M}^u_{i+1}$. The model $\mathfrak{M}^u$ is the union of all $\mathfrak{M}^u_i$'s.  Clearly $\mathfrak{M}^u$ is a model with total powerset function.

A routine check shows that all the models $\mathfrak{M}^u_i$ satisfy all the axioms of the alternative axiomatizations of $\mathsf{EA}^{\mathsf{set}}$ other than the defining axiom for $\mathcal{P}$. Since all this axioms were $\Pi_2^{\mathsf{set}}(\overline{\mathsf{V}})$-sentences, they also holds in  $\mathfrak{M}^u$. Hence $\mathfrak{M}^u$  is a model of $\mathsf{EA}^{\mathsf{set}}$.
\end{proof}

\begin{rem} By the same technique as above it is easy to prove that for $\Pi_1^{\mathsf{set}}$ sentences
$$\mathsf{wEA}^{\mathsf{set}}\vdash \varphi\iff \mathsf{H}^{\omega}\vdash \varphi,$$
where $\mathsf{wEA}^{\mathsf{set}}$ is $\mathsf{EA}^{\mathsf{set}}$ with the scheme of adduction induction for $\Delta_0(\mathcal{P},\overline{\mathsf{V}})$-formulas replaced by the scheme of $\varepsilon$-induction for $\Delta_0(\mathcal{P},\overline{\mathsf{V}})$-formulas.
\end{rem}

\subsection{Cardinal arithmetic in $\mathsf{EA}^{\mathsf{set}}$ and the theory $\mathsf{EA}$}
The standard informal definition of cardinal numbers is that they are equivalence classes of sets with respect to equinumerocity relation. Unfortunately this equivalence classes do not form sets. And in order to work with cardinal numbers as individual sets they should be represented as some sets from which it is possible to recover the respective equivalence class. The most well-known solution to this (that is typically used in $\mathsf{ZFC}$) is to define the cardinal number $|x|$ to be the least ordinal $\alpha$ such that there is a bijection $f\colon x\to \alpha$.  A different solution (that sometimes is used in $\mathsf{ZF}$) is to define cardinal number $|x|$ to be the set $\{y\mid y\in \mathsf{V}_{\alpha}\mbox{ and there is a bijection $f\colon x\to y$}\}$, where $\alpha$ is the least ordinal for which there exists $y\in \mathsf{V}_{\alpha}$ and a bijection  $f\colon x\to y$. It is possible to show that $\mathsf{EA}^{\mathsf{set}}$ does prove axiom of choice and Zermelo theorem. However, $\mathsf{EA}^{\mathsf{set}}$ does not prove Mostowski transitive collapse theorem and even that every set is equinumerous to an ordinal. Thus the $\mathsf{ZFC}$-style definition of cardinal numbers is not suitable for $\mathsf{EA}^{\mathsf{set}}$.

We will use $\mathsf{ZF}$-style cardinals within $\mathsf{EA}^{\mathsf{set}}$. We put $$|a|=\{y\mid y\in \overline{\mathsf{V}}(b)\mbox{ and there is a bijection $f\colon a\to y$}\},$$ where $\overline{\mathsf{V}}(b)$ is the smallest level of von-Neumann hierarchy such that there exists at least one $y\in \overline{\mathsf{V}}(b)$ equinumerous with $a$.
For two cardinals $c,d$ we write $c\le d$ if there is an injection from some $x\in c$ into some $y\in d$.  For two cardinals $c,d$ the cardinal $c+d$ is the cardinality of the disjoint union $|x\sqcup y|$, for some $x\in c$ and $y\in d$ (as usual $x\sqcup y= x\times \{\emptyset\}\cup y\times \{\{\emptyset\}\}$).  For two cardinals $c,d$ the cardinal $cd$  is the cardinal of Cartesian product $|x\times y|$,  for some $x\in c$ and $y\in d$. For a cardinal $c$ the cardinal $2^c$  is the cardinal of the powerset $|\mathcal{P}(x)|$,  for some $x\in c$. It is easy to check that the definitions of addition, multiplication, and exponentiation indeed give well-defined functions.

Henceforth the cardinal arithmetic gives us an embedding $\mathcal{CRD}$ of the arithmetical language with the predicates $=,\le$, constant  $0$, and functions $S,+,\times,x\longmapsto 2^x$ into the theory $\mathsf{EA}^{\mathsf{set}}$.  Below we will show that $\mathcal{CRD}$ is an interpretation of Kalmar elementary functions arithmetic $\mathsf{EA}$.

Recall that $\mathsf{EA}$ is a first-order theory, which language is the arithmetical language with exponentiation function (as in the paragraph above). The non-logical axioms of $\mathsf{EA}$ are
\begin{enumerate}
\item \label{EA_u1} $S(x)\ne 0$;
\item $S(x)=S(y)\to x=y$;
\item $x+0=x$;
\item $x+S(y)=S(x+y)$;
\item $x0=0$;
\item $xS(y)=xy+x$;
\item $2^0=1$;
\item $2^{S(x)}=2^x+2^x$;
\item $x\le 0\mathrel{\leftrightarrow} x=0$;
\item \label{EA_u10} $x\le S(y) \mathrel{\leftrightarrow} x\le y\lor x=S(y)$;
\item \label{EA_i} $ \varphi(0)\land \forall x\; (\varphi(x)\to \varphi(S(x)))\to \forall x\;\varphi(x)$, where $\varphi$ is $\Delta_0$ ($\Delta_0\mbox{-}\mathsf{Ind}$).
\end{enumerate}

\begin{prop}\label{CRD_int} $\mathcal{CRD}$ is an interpretation of $\mathsf{EA}$ in $\mathsf{EA}^{\mathsf{set}}$.\end{prop}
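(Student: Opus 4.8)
The plan is to verify that $\mathsf{EA}^{\mathsf{set}}$ proves the $\mathcal{CRD}$-translation of each axiom \ref{EA_u1}.--\ref{EA_i}. of $\mathsf{EA}$, where the domain of the interpretation is the class of cardinals $|a|$ and arithmetical equality is interpreted as genuine set equality (two equinumerous cardinals are literally the same set). The structural point that makes this work is that every cardinal operation is \emph{total} in $\mathsf{EA}^{\mathsf{set}}$: since $\mathcal{P}$ is total, cardinal exponentiation $2^{c}=|\mathcal{P}(x)|$ is total, and together with the existence of disjoint unions and Cartesian products (available once Lemma \ref{vcl} is used to build ordered pairs and products) this makes $S$, $+$ and $\times$ total on the class of cardinals. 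As the operations were already seen to be well defined, $\mathcal{CRD}$ is a genuine relative interpretation and only the relativized axioms remain to be checked.

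The equational recursion axioms --- the clauses for $+$, $\times$ and $x\mapsto 2^{x}$ --- translate into cardinal identities that I would witness by explicit definable bijections. For instance $c+0=c$ comes from $x\sqcup\emptyset\cong x$, the clause $c+S(d)=S(c+d)$ from associativity of disjoint union, the clause $c\cdot S(d)=c\cdot d+c$ from the bijection $x\times(y\sqcup\{v\})\cong(x\times y)\sqcup x$, the clause $2^{0}=1$ from $\mathcal{P}(\emptyset)=\{\emptyset\}$, and $2^{S(c)}=2^{c}+2^{c}$ from splitting $\mathcal{P}(x\cup\{v\})$, with $v\notin x$, according to whether the chosen subset contains $v$. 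Each such bijection is an explicitly definable set, so $\mathsf{EA}^{\mathsf{set}}$ proves the corresponding identity as soon as the relevant products and powersets are known to exist, which they are by totality.

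The remaining first-order axioms encode finiteness (pigeonhole) facts about cardinals. The axiom $S(c)\neq 0$ is immediate, since $S(c)$ is the cardinality of a nonempty set, which is not equinumerous with $\emptyset$; similarly $c\le 0\mathrel{\leftrightarrow}c=0$ is direct, as only $\emptyset$ injects into $\emptyset$. The injectivity of $S$ is exactly cardinal cancellation $c+1=d+1\to c=d$, and the nontrivial implication of the order-step axiom $c\le S(d)\mathrel{\leftrightarrow}c\le d\lor c=S(d)$ both amount to modifying a given bijection (respectively injection) so as to delete a top element. I would prove these by $\Delta_0^{\mathsf{set}}(\mathcal{P},\overline{\mathsf{V}})$ adduction induction on a representative set, which in $\mathsf{EA}^{\mathsf{set}}$ is precisely the principle realizing ``every set is finite''.

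The main obstacle is the $\mathcal{CRD}$-translation of $\Delta_0$-induction \ref{EA_i}. Here adduction induction is available only for $\Delta_0^{\mathsf{set}}(\mathcal{P},\overline{\mathsf{V}})$ formulas, so I first need a translation lemma: for every $\Delta_0$ arithmetical $\varphi$ the formula $\varphi^{\mathcal{CRD}}$ is $\mathsf{EA}^{\mathsf{set}}$-provably equivalent to a $\Delta_0^{\mathsf{set}}(\mathcal{P},\overline{\mathsf{V}})$ formula $\widetilde{\varphi}$. The delicate point is a bounded cardinal quantifier $\exists c\le d$: every cardinal $c\le d$ is $|z|$ for a subset $z$ of a representative of $d$, and all such cardinalities sit inside a single level $\overline{\mathsf{V}}(b)$ with $b$ a term built from the parameters by $\mathcal{P}$ and $\overline{\mathsf{V}}$; one also uses that $|a|$ itself is $\Delta_0^{\mathsf{set}}$-definable with bound $\mathcal{P}(\overline{\mathsf{V}}(a))$, since $a$ is equinumerous with itself and $a\in\mathcal{P}(\overline{\mathsf{V}}(a))$. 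Replacing $\exists c\le d$ by a quantifier bounded by this level keeps the translation $\Delta_0^{\mathsf{set}}$, by the same uniform bound-finding device used in the proof of Lemma \ref{EA_set_alt}; I expect this bookkeeping to be the principal difficulty. Granting the lemma, induction transfers cleanly: assuming $\widetilde{\varphi}(0)$ and $\forall c\,(\widetilde{\varphi}(c)\to\widetilde{\varphi}(S(c)))$, set $\psi(x):=\widetilde{\varphi}(|x|)$, which is $\Delta_0^{\mathsf{set}}$ by the boundedness of $|x|$. Then $\psi(\emptyset)$ holds as $|\emptyset|=0$, and $\psi$ is adductively progressive because $|x\cup\{y\}|$ equals $|x|$ when $y\in x$ and $S(|x|)$ when $y\notin x$, so $\psi(x)\land\psi(y)$ yields $\psi(x\cup\{y\})$ via the successor step. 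Adduction induction gives $\forall x\,\psi(x)$, and since every cardinal is some $|x|$ this is $\forall c\,\widetilde{\varphi}(c)$, the translation of the conclusion of the induction axiom.
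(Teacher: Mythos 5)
Your proposal is correct and follows essentially the same route as the paper: the non-induction axioms are handled by direct cardinal identities, $\Delta_0$-induction is reduced to adduction induction by composing with the cardinality function $x\mapsto|x|$, and the crux is the same translation lemma showing that $\mathcal{CRD}$-images of arithmetical $\Delta_0$ formulas are $\mathsf{EA}^{\mathsf{set}}$-provably $\Delta_0^{\mathsf{set}}(\mathcal{P},\overline{\mathsf{V}})$, with cardinality-bounded quantifiers replaced by membership quantifiers bounded by $\mathcal{P},\overline{\mathsf{V}}$-terms. The paper merely systematizes your ad hoc bound-finding via the ``constant rank property'' of definable functions and the auxiliary function $\mathsf{dc}\colon x\longmapsto\{|y|\mid|y|\le|x|\}$, which packages exactly the quantifier replacement you describe.
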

\begin{proof}
In a straightforward manner we prove the $\mathcal{CRD}$-translations of the axioms \ref{EA_u1}.--\ref{EA_u10}. of $\mathsf{EA}$.

Observe that the $\mathcal{CRD}$-translation of an instance of mathematical induction
$$\mathcal{CRD}( \varphi(0)\land \forall x\; (\varphi(x)\to \varphi(S(x)))\to \forall x\;\varphi(x))$$
is implied by the following instance of adduction induction
$$\forall x,y (\mathcal{CRD}(\varphi(|x|))\land \mathcal{CRD}(\varphi(|y|))\to \mathcal{CRD}(\varphi(|x\cup\{y\}|)))\to \forall x\; \mathcal{CRD}(\varphi(|x|)).$$
Thus in order to prove that $\Delta_0\mbox{-}\mathsf{Ind}$ holds in the interpretation $\mathcal{CRD}$ it is enough to show that for any  $\Delta_0$  formula $\varphi(x,y_1,\ldots,y_n)$ the formula $\mathcal{CRD}(\varphi(|x|,|y_1|,\ldots,|y_n|))$ is equivalent to a $\Delta_0^{\mathsf{set}}(\mathcal{P},\overline{\mathsf{V}})$ formula in $\mathsf{EA}^{\mathsf{set}}$.

Suppose $f(x_1,\ldots,x_n)$ is a definable over $\mathsf{EA}^{\mathsf{set}}$ function, i.e. there is a fixed formula $\mathsf{Gr}_{f}(x_1,\ldots,x_n,y)$ defining the graph of $f$ such that $$\mathsf{EA}^{\mathsf{set}}\vdash\forall x_1,\ldots,x_n\exists! y\;(\mathsf{Gr}_f(x_1,\ldots,x_n,y)).$$ We say that $f(x_1,\ldots,x_n)$ have constant rank property if additionally over $\mathsf{EA}^{\mathsf{set}}$ the formula $\mathsf{Gr}_f$ is equivalent to a $\Delta_0^{\mathsf{set}}(\mathcal{P},\overline{\mathsf{V}})$ formula and there is a number $r_f$ such that $$\begin{aligned} \mathsf{EA}^{\mathsf{set}}\vdash (\forall& x_1,\ldots,x_n,p,y)\;(\mathsf{Gr}_f(x_1,\ldots,x_n,y)\to \\ & y\in \mathcal{P}^{r_f+1}(\overline{\mathsf{V}}(x_1))\lor\ldots \lor y\in \mathcal{P}^{r_f+1}(\overline{\mathsf{V}}(x_n))\lor y\in \mathcal{P}^{r_f+1}(\overline{\mathsf{V}}(p))).\end{aligned}$$
Obviously, $\mathcal{P}$ and $\overline{\mathsf{V}}$ have constant rank property with $r_{\mathcal{P}}=1$ and $r_{\overline{\mathsf{V}}}=0$.

Let functions $f_1,\ldots,f_n$ have constant rank property. And let us consider the naturally defined class of formulas $\Delta_0^{\mathsf{set}}(\mathcal{P},\overline{\mathsf{V}},f_1,\ldots,f_n)$ that consists of all the formulas with bounded quantifiers in the signature with predicates $\in,=$ and the functions $\mathcal{P},\overline{\mathsf{V}},f_1,\ldots,f_n$. We prove that each $\Delta_0^{\mathsf{set}}(\mathcal{P},\overline{\mathsf{V}},f_1,\ldots,f_n)$ formula $\varphi(\vec{x})$ is $\mathsf{EA}^{\mathsf{set}}$-provably equivalent to a $\Delta_0^{\mathsf{set}}(\mathcal{P},\overline{\mathsf{V}})$ formula $\varphi'(\vec{x},p)$. We achieve this in two steps. First, by induction on a term construction we show that each term $t(\vec{x})$ built of functions with constant rank property is itself a function with constant rank property. For induction step we use Lemma \ref{vcl} Claim \ref{vcl_prop6} to show that if $f(x_1,\ldots,x_n)$ and $u_1(y_1,\ldots,y_m), \ldots,u_n(y_1,\ldots,y_m)$ have constant rank properties, then $g(y_1,\ldots,y_m)=f(u_1(y_1,\ldots,y_m), \ldots,u_n(y_1,\ldots,y_m))$ have constant rank property with $r_g=r_f+\max(r_{u_1},\ldots,r_{u_n})$. Second, by induction on construction of $\Delta_0^{\mathsf{set}}(\mathcal{P},\overline{\mathsf{V}},f_1,\ldots,f_n)$ formulas $\psi(\vec{x})$ we construct $\mathsf{EA}^{\mathsf{set}}$-provably equivalent $\Delta_0^{\mathsf{set}}(\mathcal{P},\overline{\mathsf{V}})$ formulas $\psi'(\vec{x},p)$. Here both the base and the step of induction are easy to justify using the fact that any term built of $\mathcal{P},\mathcal{V},f_1,\ldots,f_n$ is a function with constant rank property.

It is easy to see that all the functions of cardinal arithmetic (including constant $0$) have constant rank property. That the comparison of cardinals is definable by a $\Delta_0^{\mathsf{set}}(\mathcal{P},\overline{\mathsf{V}})$ formula. And that the functions $x\longmapsto |x|$, $\mathsf{dc}\colon x\longmapsto \{|y|\mid |y|\le |x|\}$ have constant rank property. 

In terms of $\mathsf{dc}$ we easily replace cardinality-bounded quantifiers with bounded membership quantifiers: formula $\forall y(|y|\le |t| \to \varphi(|y|))$ is $\mathsf{EA}^{\mathsf{set}}$-equivalent to $\forall y\in \mathsf{dc}(t)\;\varphi(y)$. Thus, for any arithmetical $\Delta_0$-formula $\psi(x_1,\ldots,x_n)$, the translation $\mathcal{CRD}(\psi(|x_1|,\ldots,|x_n|))$ is $\mathsf{EA}^{\mathsf{set}}$-provably equivalent to some $\Delta_0^{\mathsf{set}}(\mathcal{P},\overline{\mathsf{V}},0,S,+,\times,x\longmapsto 2^x,x\longmapsto |x|,\mathsf{dc})$ formula $\psi'(x_1,\ldots,x_n)$. And by the above $\psi'$ is $\mathsf{EA}^{\mathsf{set}}$-provably equivalent to a $\Delta_0^{\mathsf{set}}(\mathcal{P},\overline{\mathsf{V}})$-formula $\psi''(x_1,\ldots,x_n,p)$.

Applying this construction to the case of formula $\varphi(x)$ we conclude the proof of the lemma.
\end{proof}



\subsection{Bi-interpretability of $\mathsf{EA}^{\mathsf{set}}$ and $\mathsf{EA}$}\label{bi-interpretabilit_section}
In this section we show that, this two theories enjoy nicer connection than just existence of an interpretation of $\mathsf{EA}$ in $\mathsf{EA}^{\mathsf{set}}$. Namely, we will show that the interpretation $\mathcal{CRD}$ together with Ackermann's interpretation of $\mathsf{EA}^{\mathsf{set}}$ in $\mathsf{EA}$ form a bi-interpretation between the theories.

The notion of bi-interpretability is a strong equivalence between first-order theories. For example, there is a bi-interpretation between Tarski's geometry and the theory of real closed fields. The notion most naturally could be defined in the category-theoretic setting: two theories are called bi-interpretable if there is an equivalence between them in $2$-category of interpretations (see \cite{Vis06}). In more explicit terms, a bi-interpretation between first-order theories $T$ and $U$ is a tuple $\langle i,j,k,l\rangle$ such that
\begin{enumerate}
    \item $i\colon T\triangleleft U$ is an interpretation of $T$ in $U$,
    \item $j\colon U\triangleleft T$ is an interpretation of $U$ in $T$,
    \item  $k$ is a definable isomorphism between the interpretation $\mathsf{id}_U\colon U\triangleright U$ and the composition $j\circ i$,
    \item  $l$ is a definiable isomorphism between the interpretation $\mathsf{id}_T\colon T\triangleright T$ and $i\circ j$.
\end{enumerate}
A definable isomorphism $u$ between iterpretations $i_1,i_2\colon T\triangleleft U$ is an $U$-definable bijection between the domains of $i_1$ and $i_2$ such that in any model $\mathfrak{M}$ of $U$ the function $u^{\mathfrak{M}}$ is an isomorphism of the models $i_1^{\mathfrak{M}}$ and $i_2^{\mathfrak{M}}$ of the theory $T$.


Recall that the Ackermann's membership predicate $ n\in_{\mathsf{Ack}} m$ is ``the $n$-th bit of the number $m$ is equal to $1$''. It could be naturally defined in the arithmetical language:
$$x\in_{\mathsf{Ack}}y \defiff (\exists z, w)  \;(y=z2^{x+1}+w\land w<2^{x+1}\land w\ge 2^x).$$

Theory $\mathsf{EA}$ could develop number of standard set-theoretic constructions in term of Ackermann's membership (see the book by H{\'a}jek and Pudl{\'a}k \cite[Section~I.1(b)]{HajPud17}). In particular in \cite{HajPud17} it have been proved in $\mathsf{EA}$ that $\in_{\mathsf{Ack}}$ satisfies extensionality and powerset axioms. The latter fact allows us to define in $\mathsf{EA}$ the function $\mathcal{P}_{\mathsf{Ack}}(x)$ that maps a number $x$ to its powerset with respect to $\in_{\mathsf{Ack}}$. 

Moreover, we naturally could define $\overline{\mathsf{V}}_{\mathsf{Ack}}$ function. It is a well-known fact that it is possible to define the graph of the superexponentiation function 
\begin{equation}\label{supexp_def}2^x_0=x,\;\;\;\;2^{x}_{y+1}=2^{2^x_y}\end{equation}
by a $\Delta_0$ arithmetical formula and to prove in $\mathsf{EA}$ that the partial function $2^x_y$ satisfies the equalities (\ref{supexp_def}) (in the sense that both the sides of the equalities are simultaneously defined or undefined and if they are defined then they are equal).  We define $\overline{\mathsf{V}}_{\mathsf{Ack}}(x)$ to be the least $y$ of the form $2_z^{0}-1$  such that $y\ge x$. It is easy to see that  $\mathsf{EA}$ proves that $\overline{\mathsf{V}}_{\mathsf{Ack}}(x)$ is a total function and that $\overline{\mathsf{V}}_{\mathsf{Ack}}(x)\le 2^x$. Simple check shows $\mathsf{EA}$ proves the defining axiom for $\overline{\mathsf{V}}$ within this interpretation. 

Thus we have defined embedding $\mathcal{ACK}$ of the language of $\mathsf{EA}^{\mathsf{set}}$ into the language of arithmetic. Clearly, the predicate $\in_{\mathsf{Ack}}$ and the functions $\overline{\mathsf{V}}(x)$, $\mathcal{P}(x)$ are Kalmar elementary. Therefore,  for each set-theoretic $\Delta_0^{\mathsf{set}}(\mathcal{P},\overline{\mathsf{V}})$ formula $\varphi(\vec{x})$ we could find $\Delta_0$ formula $\varphi'(\vec{x})$ that is $\mathsf{EA}$-provably equivalent to $\mathcal{ACK}(\varphi(\vec{x}))$. This allows us to prove in $\mathsf{EA}$ the $\mathcal{ACK}$-translations of all the instances of $\Delta_0^{\mathsf{set}}(\mathcal{P},\overline{\mathsf{V}})$-separation and $\Delta_0^{\mathsf{set}}(\mathcal{P},\overline{\mathsf{V}})$ adduction induction. This concludes the proof of the fact that $\mathcal{ACK}$ is an interpretation of $\mathsf{EA}^{\mathsf{set}}$ in $\mathsf{EA}$.

The isomorphism $k$ between $id_{\mathsf{EA}}$ and $\mathcal{CRD}\circ \mathcal{ACK}$ is the Kalmar elementary function that maps a number $x$ to the $\in_{\mathsf{Ack}}$-cardinal that represent the class of equivalence of $\in_{\mathsf{Ack}}$-sets with precisely $x$ elements. 

The isomorphism $l$ between $id_{\mathsf{EA}^{\mathsf{set}}}$ and $\mathcal{ACK}\circ \mathcal{CRD}$ should be a function that maps a set $x$ to the cardinal number $c$  that represents the set $x$ with respect to $\mathcal{CRD}(\in_{\mathsf{Ack}})$. We see that $l$ should satisfy the following equation \begin{equation}\label{l_property}l(x)=\sum\limits_{y\in x} 2^{l(y)}.\end{equation}
Here the cardinal $\sum\limits_{y\in x} 2^{l(y)}$ could be formally defined as the cardinality of $\bigcup\limits_{y\in x} f(y)\times \{y\}$, where $f$ is any set-size function with $\mathsf{dom}(f)=x$ that maps a set $y$ to a set with cardinality $2^{l(y)}$.
We prove by $\varepsilon$-induction on sets $a$ that partial function $l_a\colon \overline{\mathsf{V}}(a)\to \mathcal{P}(\overline{\mathsf{V}}(a))$ that satisfies the equation (\ref{l_property}) exists and unique. And then we define the desired function $l$ to be the union of all $l_a$'s.  Using (\ref{l_property}) it is straightforward to show that we have indeed defined the desired isomorphism $l$.

Thus we have proved
\begin{thm}\label{bi-interpretability_theorem}
   The interpretation $\mathcal{ACK}\colon \mathsf{EA}\triangleright \mathsf{EA}^{\mathsf{set}}$ and the interpretation  $\mathcal{CRD}\colon \mathsf{EA}^{\mathsf{set}}\triangleright \mathsf{EA}$ form a bi-interpretation. Hence $\mathsf{EA}$ proves a sentence $\varphi$ whenever $\mathsf{EA}^{\mathsf{set}}$ proves $\mathcal{CRD}(\varphi)$ and $\mathsf{EA}^{\mathsf{set}}$ proves a sentence $\psi$ whenever $\mathsf{EA}^{\mathsf{set}}$ proves $\mathcal{ACK}(\psi)$ 
\end{thm}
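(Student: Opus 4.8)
The plan is to assemble the bi-interpretation from the pieces already in hand. The two interpretation clauses are settled: $\mathcal{CRD}\colon\mathsf{EA}^{\mathsf{set}}\triangleright\mathsf{EA}$ is an interpretation by Proposition~\ref{CRD_int}, and $\mathcal{ACK}\colon\mathsf{EA}\triangleright\mathsf{EA}^{\mathsf{set}}$ is an interpretation by the discussion preceding the theorem. What remains is to verify that the maps $k$ and $l$ defined above are genuine definable isomorphisms---that is, that $\mathsf{EA}$ (respectively $\mathsf{EA}^{\mathsf{set}}$) proves each of them to be a bijection onto the domain of the relevant self-interpretation and to respect every symbol of the signature---and then to read off the two faithfulness implications.

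For $k$ I would work entirely inside $\mathsf{EA}$. Since $k$ is Kalmar elementary it is $\mathsf{EA}$-definable, and the usual Ackermann-coding bookkeeping shows that $\mathsf{EA}$ proves $k$ to be a bijection between the numbers and the domain of the self-interpretation $\mathcal{CRD}\circ\mathcal{ACK}$. That $k$ commutes with $0,S,+,\times$ and $x\mapsto 2^x$ and preserves $=$ and $\le$ reduces to elementary identities of finite cardinal arithmetic---for instance that the disjoint union of an $x$-element and a $y$-element $\in_{\mathsf{Ack}}$-set has $x+y$ elements---each of which is a $\Delta_0$ fact available in $\mathsf{EA}$. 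This part is routine.

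For $l$ the work is genuinely set-theoretic, and this is where I expect the main difficulty. First I would prove, by $\varepsilon$-induction in $\mathsf{EA}^{\mathsf{set}}$, that for every $a$ there is a unique partial function $l_a\colon\overline{\mathsf{V}}(a)\to\mathcal{P}(\overline{\mathsf{V}}(a))$ satisfying the recursion~(\ref{l_property}), and then set $l=\bigcup_a l_a$. The codomain bound is the crux: because $\mathsf{EA}^{\mathsf{set}}$ is weak and many set operations are not total, one must check at each rank that the disjoint unions, Cartesian products and powersets implicit in the cardinal sum $\sum_{y\in x}2^{l(y)}$ actually exist and stay confined to the level $\overline{\mathsf{V}}(a)$, so that $l_a$ is total there with values in $\mathcal{P}(\overline{\mathsf{V}}(a))$; bounding the graph of $l_a$ inside a fixed power of $\overline{\mathsf{V}}(a)$ additionally keeps the existence-and-uniqueness statement within the $\Delta_0^{\mathsf{set}}(\mathcal{P},\overline{\mathsf{V}})$ complexity to which the $\varepsilon$-induction scheme of $\mathsf{EA}^{\mathsf{set}}$ applies. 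Here the closure properties of $\overline{\mathsf{V}}$ from Lemma~\ref{vcl} and the constant-rank bookkeeping behind Proposition~\ref{CRD_int} carry the load. Once $l$ is in hand the remaining checks are comparatively direct: $l$ is a bijection, and it carries $\in$ to the cardinal-side Ackermann membership $\mathcal{CRD}(\in_{\mathsf{Ack}})$, with the analogous preservation clauses for $\mathcal{P}$ and $\overline{\mathsf{V}}$. The membership clause is exactly the assertion that $l(x)=\sum_{y\in x}2^{l(y)}$ is the Ackermann code of $\{\,l(y)\mid y\in x\,\}$: injectivity of $l$ makes the exponents $l(y)$ distinct, so the sum is a bona fide binary expansion whose members under $\mathcal{CRD}(\in_{\mathsf{Ack}})$ are precisely the $l(y)$ with $y\in x$.

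Finally, the two displayed implications follow from the completed bi-interpretation by the standard transfer argument. If $\mathsf{EA}^{\mathsf{set}}\vdash\mathcal{CRD}(\varphi)$, then applying the interpretation $\mathcal{ACK}$ gives $\mathsf{EA}\vdash\mathcal{ACK}(\mathcal{CRD}(\varphi))$; since $k$ is a definable isomorphism it witnesses $\mathsf{EA}\vdash\mathcal{ACK}(\mathcal{CRD}(\varphi))\mathrel{\leftrightarrow}\varphi$, and hence $\mathsf{EA}\vdash\varphi$. The symmetric implication is obtained in the same way, applying $\mathcal{CRD}$ and invoking the isomorphism $l$ to collapse $\mathcal{CRD}(\mathcal{ACK}(\psi))$ back to $\psi$ over $\mathsf{EA}^{\mathsf{set}}$.
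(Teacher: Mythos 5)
Your proposal follows essentially the same route as the paper: it assembles the bi-interpretation from Proposition~\ref{CRD_int} and the $\mathcal{ACK}$ development, establishes $l$ by the same $\varepsilon$-induction on the partial functions $l_a$ bounded in powers of $\overline{\mathsf{V}}(a)$, and closes with the standard transfer argument that the paper leaves implicit after ``Thus we have proved.'' The extra details you supply (the $\Delta_0^{\mathsf{set}}(\mathcal{P},\overline{\mathsf{V}})$ complexity bookkeeping needed for the induction scheme to apply, and the binary-expansion reading of $l(x)=\sum_{y\in x}2^{l(y)}$ for the membership clause) are correct refinements of exactly the steps the paper sketches.
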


\begin{cor}\label{HCST_EA_ON_cor}
  For any $\Pi_1^{\mathsf{set}}(\overline{\mathsf{V}})$ sentence  $\varphi$
  $$\mathsf{H}^{\omega}\vdash \varphi^{\mathsf{HF}}\iff \mathsf{EA}\vdash \mathcal{ACK}(\varphi).$$
\end{cor}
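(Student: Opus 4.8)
The plan is to combine the two main results already established in the excerpt: Proposition \ref{H_and_EA^set}, which gives the equivalence $\mathsf{H}^{\omega}\vdash \varphi^{\mathsf{HF}} \iff \mathsf{EA}^{\mathsf{set}}\vdash \varphi$ for $\Pi_1^{\mathsf{set}}(\overline{\mathsf{V}})$ sentences, and Theorem \ref{bi-interpretability_theorem}, which establishes that $\mathcal{ACK}$ and $\mathcal{CRD}$ form a bi-interpretation between $\mathsf{EA}$ and $\mathsf{EA}^{\mathsf{set}}$. The corollary asserts $\mathsf{H}^{\omega}\vdash \varphi^{\mathsf{HF}}\iff \mathsf{EA}\vdash \mathcal{ACK}(\varphi)$, so in view of Proposition \ref{H_and_EA^set} it suffices to prove the purely arithmetic/set-theoretic equivalence $\mathsf{EA}^{\mathsf{set}}\vdash \varphi \iff \mathsf{EA}\vdash \mathcal{ACK}(\varphi)$.

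First I would handle the easy direction. If $\mathsf{EA}^{\mathsf{set}}\vdash \varphi$, then applying the interpretation $\mathcal{ACK}\colon \mathsf{EA}\triangleright \mathsf{EA}^{\mathsf{set}}$ and using that interpretations preserve provability, we immediately get $\mathsf{EA}\vdash \mathcal{ACK}(\varphi)$. This requires nothing beyond the fact that $\mathcal{ACK}$ is an interpretation, which is part of Theorem \ref{bi-interpretability_theorem}.

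For the reverse direction, suppose $\mathsf{EA}\vdash \mathcal{ACK}(\varphi)$. The natural move is to invoke the bi-interpretability. Theorem \ref{bi-interpretability_theorem} already states the transfer principle that $\mathsf{EA}^{\mathsf{set}}$ proves $\psi$ whenever $\mathsf{EA}^{\mathsf{set}}$ proves $\mathcal{ACK}(\psi)$; more usefully, since $k$ and $l$ are definable isomorphisms witnessing that $\mathcal{CRD}\circ\mathcal{ACK}$ and $\mathcal{ACK}\circ\mathcal{CRD}$ are definably isomorphic to the identity interpretations, one has $\mathsf{EA}^{\mathsf{set}}\vdash \varphi \leftrightarrow \mathcal{CRD}(\mathcal{ACK}(\varphi))$ (this is the content of $l$ being a definable isomorphism between $\mathsf{id}_{\mathsf{EA}^{\mathsf{set}}}$ and $\mathcal{ACK}\circ\mathcal{CRD}$, transported across the self-interpretation). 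From $\mathsf{EA}\vdash \mathcal{ACK}(\varphi)$, applying the interpretation $\mathcal{CRD}$ gives $\mathsf{EA}^{\mathsf{set}}\vdash \mathcal{CRD}(\mathcal{ACK}(\varphi))$, and then the definable isomorphism $l$ lets us conclude $\mathsf{EA}^{\mathsf{set}}\vdash \varphi$.

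The step I expect to be the main obstacle, or at least the one requiring the most care, is the correct bookkeeping of how the definable isomorphism $l$ yields $\mathsf{EA}^{\mathsf{set}}\vdash \varphi \leftrightarrow \mathcal{CRD}(\mathcal{ACK}(\varphi))$ for an arbitrary formula $\varphi$, not merely the transfer of theoremhood. One must verify that a definable isomorphism between an interpretation and the identity yields, provably in the base theory, the biconditional between $\varphi$ and its double-translation for every formula — a standard but slightly delicate induction on the structure of $\varphi$ using that $l$ commutes with the basic predicates and functions. Once this general fact about bi-interpretations is in hand (it is the standard reason bi-interpretability transfers provability in both directions), the corollary follows by chaining it with Proposition \ref{H_and_EA^set}: $\mathsf{H}^{\omega}\vdash \varphi^{\mathsf{HF}} \iff \mathsf{EA}^{\mathsf{set}}\vdash \varphi \iff \mathsf{EA}\vdash \mathcal{ACK}(\varphi)$.
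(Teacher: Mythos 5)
Your proposal is correct and follows essentially the same route as the paper: the corollary is obtained by chaining Proposition \ref{H_and_EA^set} with the provability-transfer property of the bi-interpretation, which the paper packages as the ``Hence'' clause of Theorem \ref{bi-interpretability_theorem} (namely that $\mathsf{EA}^{\mathsf{set}}\vdash\psi$ whenever $\mathsf{EA}\vdash\mathcal{ACK}(\psi)$, the easy converse being immediate since $\mathcal{ACK}$ is an interpretation). The bookkeeping step you single out --- deriving $\mathsf{EA}^{\mathsf{set}}\vdash \varphi \leftrightarrow \mathcal{CRD}(\mathcal{ACK}(\varphi))$ from the definable isomorphism $l$ by induction on formulas --- is exactly the standard content that the paper's theorem statement absorbs, so your argument is the paper's argument with that clause unpacked.
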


\subsection{Ordinal arithmetic and superexponential cut}
 As in the case of $\mathsf{H}^{\omega}$ we define ordinals in $\mathsf{EA}^{\mathsf{set}}$ to be transitive sets consisting only of transitive sets. Clearly, we could express by a $\Delta_0^{\mathsf{set}}(\mathcal{P},\overline{\mathsf{V}})$ formula $x\in \mathsf{On}$ the fact that set $x$ is an ordinal. 
 
 Using powerset axiom it is easy to prove the totality of the successor function on ordinals. We want to give $\Delta_0^{\mathsf{set}}(\mathcal{P},\overline{\mathsf{V}})$ formulas for graphs of partial functions $+,\times,\alpha\longmapsto 2^\alpha$ of ordinal arithmetic that satisfy the standard recursive definitions:
\begin{enumerate}
\item $\alpha+\beta=\sup(\{\alpha\}\cup\{S(\alpha+\gamma)\mid \gamma<\beta\})$;
\item $\alpha\cdot\beta=\sup\{\alpha\cdot \gamma+\alpha\mid \gamma<\beta\}$;
\item $2^\alpha=\sup\{2^\beta+2^\beta\mid \beta<\alpha\}$.
\end{enumerate}
Since all the functions are defined in the same manner, we give the definition only for addition function. We consider the partial addition functions $+_\delta\colon \delta\times\delta\to \delta$ such that
\begin{enumerate}
\item $\alpha+_{\delta}\beta$ is defined iff
\begin{enumerate}
\item $\alpha,\beta\in \delta$,
\item for all $\gamma<\beta$ the value $\alpha+_\delta\gamma$ is defined, 
\item $\sup(\{\alpha\}\cup\{S(\alpha+_\delta\gamma)\mid \gamma<\beta\})\in \delta$;
\end{enumerate}
\item if $\alpha+_\delta\beta$ is defined then $\alpha+_\delta\beta=\sup(\{\alpha\}\cup\{S(\alpha+_\delta\gamma)\mid \gamma<\beta\})$.
\end{enumerate}
The existence and uniqueness of partial functions $+_{\delta}$ is proved by induction on $\delta$. We define $$\alpha+\beta=\gamma\defiff  \alpha+_{S(\gamma)}\beta=\gamma.$$
The ordinal arithmetic gives us an embedding $\mathcal{ON}$ of the predicate-only version of arithmetical language into the language of $\mathsf{EA}^{\mathsf{set}}$.

The proof of Proposition \ref{H_and_EA^set} could be modified to obtain the following:
\begin{lem}\label{HCST_EA^set_ON}
  For any $\Pi_1^{\mathsf{pred}}$ sentence $\varphi$
  $$\mathsf{H}^{\omega}\vdash \mathcal{NAT}(\varphi)\iff \mathsf{EA}^{\mathsf{set}}\vdash \mathcal{ON}(\varphi).$$
\end{lem}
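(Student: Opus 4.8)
The plan is to reduce the statement to Proposition \ref{H_and_EA^set} by producing, for each $\Pi_1^{\mathsf{pred}}$ sentence $\varphi$, a single $\Pi_1^{\mathsf{set}}(\overline{\mathsf{V}})$ sentence $\widehat\varphi$ that captures the arithmetical content of $\varphi$ purely set-theoretically, and then checking that $\mathcal{NAT}(\varphi)$ and $\mathcal{ON}(\varphi)$ are, over $\mathsf{H}^{\omega}$ and over $\mathsf{EA}^{\mathsf{set}}$ respectively, equivalent to $\widehat\varphi^{\mathsf{HF}}$ and to $\widehat\varphi$. First I would verify that the graphs of the predicates of the predicate-only arithmetical language ($x=S(y)$, $x=y+z$, $x=yz$, $x=2^y$, together with $\le$ and $x\in\mathsf{On}$), when restricted to ordinals, are definable by $\Delta_0^{\mathsf{set}}(\overline{\mathsf{V}})$ formulas in which all existential witnesses (the partial functions $+_\delta$ and the suprema occurring in the recursions from Section \ref{ordinal_arithmetic_in_H_omega}) are bounded by $\overline{\mathsf{V}}$ of the arguments. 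Granting this, and writing $\varphi=\forall\vec x\,\psi(\vec x)$ with $\psi$ bounded, I set $\widehat\varphi:=\forall\vec x\,\bigl(\bigwedge_i x_i\in\mathsf{On}\to\widehat\psi(\vec x)\bigr)$, where $\widehat\psi$ replaces each arithmetical atom by its $\Delta_0^{\mathsf{set}}(\overline{\mathsf{V}})$ graph and each bounded numeral quantifier $\forall z\le t$ by the $\in$-bounded quantifier over the ordinal $S(t)$; thus $\widehat\varphi$ is genuinely $\Pi_1^{\mathsf{set}}(\overline{\mathsf{V}})$.

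The two equivalences rest on the same elementary fact, which I would isolate as the conceptual core: in any transitive structure all of whose sets are hereditarily finite every ordinal is a finite von Neumann ordinal, so that there $\mathsf{On}=\mathsf{Nat}$ and the partial ordinal operations of Section \ref{ordinal_arithmetic_in_H_omega} coincide with the partial operations used by $\mathcal{NAT}$. For the first equivalence I would argue inside $\mathsf{H}^{\omega}$: since $\mathsf{HF}=\bigcup_{n\in\mathsf{Nat}}\overline{\mathsf{V}}(n)$ is transitive and closed under $\overline{\mathsf{V}}$, every $\Delta_0^{\mathsf{set}}(\overline{\mathsf{V}})$ formula is absolute for $\mathsf{HF}$, and the ordinals lying in $\mathsf{HF}$ are exactly the members of $\mathsf{Nat}$; hence the guard $x\in\mathsf{On}$ relativized to $\mathsf{HF}$ cuts the quantifier range down to $\mathsf{Nat}$, matching the range of $\mathcal{NAT}$, and $\widehat\psi$ computes the same relations as $\mathcal{NAT}(\psi)$. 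This gives $\mathsf{H}^{\omega}\vdash\mathcal{NAT}(\varphi)\leftrightarrow\widehat\varphi^{\mathsf{HF}}$. For the second equivalence I would work in $\mathsf{EA}^{\mathsf{set}}$, where by the preceding fact $\mathsf{On}=\mathsf{Nat}$ and $\mathcal{ON}$ is by construction given by exactly these graphs, so $\mathsf{EA}^{\mathsf{set}}\vdash\mathcal{ON}(\varphi)\leftrightarrow\widehat\varphi$.

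Combining the two equivalences with Proposition \ref{H_and_EA^set} applied to the $\Pi_1^{\mathsf{set}}(\overline{\mathsf{V}})$ sentence $\widehat\varphi$ yields the chain
$$\mathsf{H}^{\omega}\vdash\mathcal{NAT}(\varphi)\iff\mathsf{H}^{\omega}\vdash\widehat\varphi^{\mathsf{HF}}\iff\mathsf{EA}^{\mathsf{set}}\vdash\widehat\varphi\iff\mathsf{EA}^{\mathsf{set}}\vdash\mathcal{ON}(\varphi),$$
which is the assertion of the lemma. Alternatively, one may bypass $\widehat\varphi$ and repeat the model-theoretic proof of Proposition \ref{H_and_EA^set} verbatim, substituting $\mathcal{NAT}(\varphi)$ for $\varphi^{\mathsf{HF}}$ and $\mathcal{ON}(\varphi)$ for $\varphi$: in the forward direction the finite model $\mathfrak{M}$ with sort-$0$ domain $\overline{\mathsf{V}}(b)$ is transitive with standard $\overline{\mathsf{V}}$, so its ordinals are its natural numbers and the arithmetic is absolute, while in the backward direction the collapsed model $\mathfrak{M}^{u}$ satisfies $\mathsf{V}=\mathsf{HF}$, so the witnessing tuple of natural numbers remains a witnessing tuple of ordinals. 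The step I expect to be the only real obstacle is the first one, namely producing $\Delta_0^{\mathsf{set}}(\overline{\mathsf{V}})$ graphs for ordinal addition, multiplication and exponentiation with $\overline{\mathsf{V}}$-bounded witnesses and with the quantifier ranges matching on both sides; the crucial input there is the collapse of $\mathsf{On}$ to $\mathsf{Nat}$ in the finite setting, which guarantees that the partiality of the operations is handled identically by $\mathcal{NAT}$ and by $\mathcal{ON}$.
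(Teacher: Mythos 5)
Your main route breaks down at both of the equivalences it rests on, and the two failures are related: both are manifestations of the ``top of the universe'' phenomenon that this paper has to work around. First, the claim that the graphs of ordinal addition, multiplication and exponentiation are $\Delta_0^{\mathsf{set}}(\overline{\mathsf{V}})$ with witnesses ``bounded by $\overline{\mathsf{V}}$ of the arguments'' is false for the witnesses you name. Since $\overline{\mathsf{V}}(x)$ is the level $\mathsf{V}_{\mathrm{rank}(x)}$, every term built from $\overline{\mathsf{V}}$ and variables has rank at most the maximal rank of the variables; hence in a $\Delta_0^{\mathsf{set}}(\overline{\mathsf{V}})$ formula whose free variables are ordinals $\alpha,\beta,\gamma$, every quantified variable ranges over sets of rank $<\max(\alpha,\beta,\gamma)$. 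But the supremum set $\{\alpha\}\cup\{S(\alpha+\gamma')\mid\gamma'<\beta\}$ has rank $\gamma+1$, a single Kuratowski pair $\langle\beta,\gamma\rangle$ has rank $\gamma+2$, and the partial function $+_\delta$ has rank about $\delta+4$: none of these is reachable by quantifiers bounded by $\overline{\mathsf{V}}$-terms. This is exactly why the paper defines $\mathcal{ON}$ by $\Delta_0^{\mathsf{set}}(\mathcal{P},\overline{\mathsf{V}})$ graphs (the function $\mathcal{P}$ is what allows climbing a fixed finite number of ranks above the arguments), whereas Proposition \ref{H_and_EA^set} tolerates only $\overline{\mathsf{V}}$; it is also why Section \ref{H_appendix} needs the classes $\mathsf{Nat}^{-n}$ and the interpretation $\mathcal{SUM}$. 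The collapse $\mathsf{On}=\mathsf{Nat}$, which you call the crucial input, does nothing against this rank obstruction.

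Second, even granting such graphs, the asserted $\mathsf{H}^{\omega}$-provable equivalence $\mathcal{NAT}(\varphi)\leftrightarrow\widehat\varphi^{\mathsf{HF}}$ is false, because $\mathsf{Nat}$ can properly contain the ordinals of $\mathsf{HF}$. In $\mathsf{H}^{\omega}$ the successor function need not be total, so $\mathsf{Nat}$ may have a largest element $n_0$; then $n_0\notin\mathsf{HF}=\bigcup_{n\in\mathsf{Nat}}\overline{\mathsf{V}}(n)$, since membership in $\mathsf{HF}$ requires rank strictly below some natural number. Concretely, in the model $(\mathsf{V}_5,\in,\overline{\mathsf{V}})$ we have $\mathsf{Nat}=\{0,1,2,3,4\}$ but $\mathsf{HF}=\mathsf{V}_4$, whose ordinals are $\{0,1,2,3\}$; taking for $\varphi$ a predicate-only rendering of $\forall x\,\lnot(x=2+2)$, the translation $\mathcal{NAT}(\varphi)$ fails in this model (there $2+2=4$ is defined), while $\widehat\varphi^{\mathsf{HF}}$ holds (in $\mathsf{V}_4$ the sum $2+2$ is undefined, as the required supremum does not exist). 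So $\mathsf{H}^{\omega}\nvdash\widehat\varphi^{\mathsf{HF}}\to\mathcal{NAT}(\varphi)$, and the first link in your chain of equivalences is broken. Repairing it at the level of provability rather than provable equivalence forces one to take a counter-model of $\mathcal{NAT}(\varphi)$, restrict it to the subsets of $\overline{\mathsf{V}}(s)$ for $s$ the largest counterexample natural, and then recover ordinals above $s$ from the higher types --- which is no longer a reduction to Proposition \ref{H_and_EA^set} but a re-run of its proof.

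That re-run is precisely your fallback suggestion, and it is the paper's actual proof: repeat the two transformations from Proposition \ref{H_and_EA^set} with $\mathcal{NAT}(\varphi)$ and $\mathcal{ON}(\varphi)$ in place of $\varphi^{\mathsf{HF}}$ and $\varphi$, verifying by induction and $\varepsilon$-induction that ordinal arithmetic is preserved at each step (in the internalized model of the forward direction, and in the passages $\mathfrak{M}\to\mathfrak{M}^f\to\mathfrak{M}^u$ of the backward direction). Had you led with that and supplied those preservation checks, your argument would coincide with the paper's.
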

\begin{proof} The proof of Proposition \ref{H_and_EA^set} consisted of two parts: 1. to transform an  $\mathsf{H}^{\omega}$-proof of $\varphi^{\mathsf{HF}}$ into an $\mathsf{EA}^{\mathsf{set}}$-proof of $\varphi$ and 2. to transform an $\mathsf{H}^{\omega}$-model of $\lnot \varphi^{\mathsf{HF}}$ into an $\mathsf{EA}^{\mathsf{set}}$-model of $\lnot \varphi$. 

The analogue of part 1. for the present lemma is a transformation of $\mathsf{H}^{\omega}$-proof of $\mathcal{NAT}(\varphi)$ into an $\mathsf{EA}^{\mathsf{set}}$-proof of $\mathcal{ON}(\varphi)$. The addition to the proof from Proposition \ref{H_and_EA^set} is that we need to verify that the ordinal arithmetic in the model $\mathfrak{M}$ that we obtain from the definition of ordinal arithmetic for the theory $\mathsf{H}^{\omega}$ coincide with the restriction of $\mathsf{EA}^{\mathsf{set}}$ ordinal arithmetic to the model. This could be achieved by a trivial proof by induction (inside $\mathsf{EA}^{\mathsf{set}}$).

And the analogue of part 2. for the present lemma is a transformation of an $\mathsf{H}^{\omega}$-model of $\lnot \mathcal{NAT}(\varphi)$ into an $\mathsf{EA}^{\mathsf{set}}$-model of $\lnot\mathcal{ON}(\varphi)$. Here the modification of the construction from Proposition \ref{H_and_EA^set} is that we need to ensure that the ordinal arithmetic is preserved when we transit from $\mathfrak{M}$ to $\mathfrak{M}^f$ and when we transit from $\mathfrak{M}^f$ to $\mathfrak{M}^u$. Which again could be done by a straightforward arguments by $\varepsilon$-induction (in $\mathsf{H}^{\omega}$).\end{proof}

The superexponential cut $\mathcal{S}$ in $\mathsf{EA}$ is:
$$ x\in\mathcal{S}\defiff 2_x^0 \mbox{ is defined}.$$
It is easy to observe that $\mathsf{EA}$ proves that  $\mathcal{S}$ is a cut, i.e. that $$x\in\mathcal{S} \Rightarrow x+1\in \mathcal{S} \mbox{ and } \forall y\le x (y\in \mathcal{S}).$$
For a sentence $\varphi$ of predicate-only version of arithmetical language, the sentence $\varphi^{\mathcal{S}}$ is the relativization of $\varphi$ to $\mathcal{S}$, i.e.  $\varphi^{\mathcal{S}}$  is $\varphi$, where all the quantifiers $\forall x$ and $\exists x$ are replaced with the quantifiers $(\forall x\in \mathcal{S})$ and $(\exists x\in \mathcal{S})$, respectively.

\begin{lem}\label{EA_EA^set_ON}Suppose $\varphi$ is a sentence of predicate-only version of arithmetical language. Then 
$$\mathsf{EA}\vdash \varphi^{\mathcal{S}}\iff \mathsf{EA}^{\mathsf{set}}\vdash \mathcal{ON}(\varphi).$$\end{lem}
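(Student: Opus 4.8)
The plan is to use the bi-interpretation of Theorem~\ref{bi-interpretability_theorem} to transport the whole statement into $\mathsf{EA}$, and then to exhibit an $\mathsf{EA}$-definable isomorphism between the superexponential cut $\mathcal{S}$ and the $\mathcal{ACK}$-image of the ordinals of $\mathsf{EA}^{\mathsf{set}}$. Since $\mathcal{ACK}\colon\mathsf{EA}\triangleright\mathsf{EA}^{\mathsf{set}}$ is one half of a bi-interpretation, Theorem~\ref{bi-interpretability_theorem} yields $\mathsf{EA}^{\mathsf{set}}\vdash\mathcal{ON}(\varphi)\iff\mathsf{EA}\vdash\mathcal{ACK}(\mathcal{ON}(\varphi))$ for every sentence $\varphi$. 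Hence the lemma reduces to establishing the single $\mathsf{EA}$-provable equivalence
\[
\mathsf{EA}\vdash\mathcal{ACK}(\mathcal{ON}(\varphi))\mathrel{\leftrightarrow}\varphi^{\mathcal{S}}.
\]

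First I would unwind the composite interpretation $\mathcal{ACK}\circ\mathcal{ON}$. Its domain formula $\mathcal{ACK}(x\in\mathsf{On})$ defines in $\mathsf{EA}$ exactly the numbers that are $\in_{\mathsf{Ack}}$-codes of finite von Neumann ordinals, with $=,\le,S,+,\times,2^{(\cdot)}$ interpreted as the $\mathcal{ACK}$-translations of the ordinal-arithmetic graphs. The $\in_{\mathsf{Ack}}$-code of the ordinal $n$ is the number $a_n$ fixed by the recursion $a_0=0$, $a_{n+1}=a_n+2^{a_n}$ (adjoining the new element $n$, whose code is $a_n$, sets the $a_n$-th Ackermann bit). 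These codes grow at the same superexponential rate as the tower $2^0_n$: a short $\Delta_0$-induction gives bounds such as $2^0_n\le a_n\le 2^0_{n+1}$ past a small base case, so, using that $\mathcal{S}$ is closed under successor, the cut $\{n:a_n\text{ exists}\}$ coincides with $\mathcal{S}=\{n:2^0_n\text{ exists}\}$. This is exactly the place where the defining properties of $\overline{\mathsf{V}}_{\mathsf{Ack}}$, whose values sit at the tower-levels $2^0_z-1$, feed into the estimate.

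The core step is then to define in $\mathsf{EA}$ the partial map $h$ sending $n$ to $a_n$ and to check that it is a definable isomorphism between $\mathcal{S}$ with the standard operations $0,S,+,\times,2^{(\cdot)},\le$ (read as partial operations on $\mathcal{S}$, a value existing iff it stays inside $\mathcal{S}$) and the structure $\mathcal{ACK}\circ\mathcal{ON}$. By the growth estimates $\dom(h)=\mathcal{S}$ and $\ran(h)$ is precisely the set of Ackermann codes of ordinals; injectivity and order-preservation are immediate from $a_n<a_{n+1}$ and $\in_{\mathsf{Ack}}$-extensionality. One then verifies that $h$ carries standard successor, addition, multiplication and base-$2$ exponentiation on $\mathcal{S}$ to the corresponding $\mathcal{ACK}\circ\mathcal{ON}$-operations, both sides being partial: on the left a value exists iff the relevant number lies in $\mathcal{S}$, and on the right iff the recursion for the ordinal operation $+_\delta$ (and its analogues) terminates inside the Ackermann ordinals. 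Transporting the predicate-only sentence $\varphi$ along $h$ converts each relativized predicate into its $\mathcal{ACK}\circ\mathcal{ON}$-counterpart and vice versa, giving the displayed equivalence and hence the lemma.

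The main obstacle I anticipate is carrying out this verification entirely within $\mathsf{EA}$, whose only induction is $\Delta_0$-induction. Proving $\dom(h)=\mathcal{S}$ requires matching the growth of $a_n$ with $2^0_n$ by bounded-quantifier estimates rather than by appeal to any stronger totality principle, and proving that $h$ respects the partial operations requires showing, by induction along $h$, that the $\mathcal{ACK}$-translated ordinal recursions and the standard arithmetic recursions define the same graphs on $\mathcal{S}$ — with particular care that ``definedness'' is preserved in \emph{both} directions, since exactly this partiality is what makes the ordinal arithmetic of $\mathsf{EA}^{\mathsf{set}}$ correspond to arithmetic on the cut rather than on all of $\mathsf{EA}$. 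Once these growth and recursion facts are in place, the remaining checks (surjectivity onto ordinal codes, order preservation, and the final transport of $\varphi$) are routine.
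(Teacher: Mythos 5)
Your proposal is correct, and its technical heart coincides with the paper's: your map $h$, given by $a_0=0$, $a_{n+1}=a_n+2^{a_n}$, is exactly the paper's function $\mathsf{on}$, with the same growth estimate (the paper uses $\mathsf{on}(x)<2^0_{x+1}$, you use $2^0_n\le a_n\le 2^0_{n+1}$), and the $\mathsf{EA}$-internal verification that $h$ is an isomorphism between arithmetic on $\mathcal{S}$ and the $\mathcal{ACK}$-translated ordinal arithmetic, with definedness matching in both directions, is the same induction the paper performs. Where you genuinely differ is the architecture of the equivalence. The paper is symmetric: besides $\mathsf{on}$ it constructs a second isomorphism inside $\mathsf{EA}^{\mathsf{set}}$, the cardinality map $x\mapsto|x|$ from the ordinals onto the cut $\mathcal{S}^{\mathcal{CRD}}$ (via an induction showing $\mathcal{CRD}(2^0_x=y)$ for $x=|\alpha|$, $y=|\overline{\mathsf{V}}(\alpha)|$), checks that the two maps are mutually inverse using the isomorphisms $k$ and $l$ of Section \ref{bi-interpretabilit_section} (this is also how bijectivity of $\mathsf{on}$ is obtained), and then each direction of the lemma follows by translating along one of the two interpretations and applying the isomorphism available in the target theory. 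You instead collapse everything onto the $\mathsf{EA}$ side: the direction $\mathsf{EA}\vdash\varphi^{\mathcal{S}}\Rightarrow\mathsf{EA}^{\mathsf{set}}\vdash\mathcal{ON}(\varphi)$ is recovered from the faithfulness clause of Theorem \ref{bi-interpretability_theorem} ($\mathsf{EA}\vdash\mathcal{ACK}(\psi)\Rightarrow\mathsf{EA}^{\mathsf{set}}\vdash\psi$), which obliges you to prove surjectivity of $h$ onto the Ackermann ordinal codes directly rather than via an inverse map; that check is indeed routine (by $\Delta_0$-induction any $\in_{\mathsf{Ack}}$-code of an ordinal arises as $a_n$, stripping off its largest element). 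Your route buys a shorter proof, since the entire $\mathsf{EA}^{\mathsf{set}}$-side construction disappears; the price is reliance on the stronger, faithfulness half of the bi-interpretation theorem, whereas the paper's two-sided argument uses only the plain soundness of the interpretations, and its explicit pair of mutually inverse definable isomorphisms is somewhat more informative than a one-sided correspondence. One small correction: the growth bounds on $a_n$ follow from the recursion alone, not from the defining properties of $\overline{\mathsf{V}}_{\mathsf{Ack}}$; the latter matters only when translating the occurrences of $\overline{\mathsf{V}}$ and $\mathcal{P}$ in the $\Delta_0^{\mathsf{set}}(\mathcal{P},\overline{\mathsf{V}})$ graphs of the ordinal operations.
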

\begin{proof}
First let us prove in $\mathsf{EA}^{\mathsf{set}}$ that the definable function $x\longmapsto |x|$ maps ordinals to the elements of the class $\mathcal{S}^{\mathcal{CRD}}$ (the cut $\mathcal{S}$ inside interpretation $\mathcal{CRD}$). We show this by proving by induction on ordinals $\alpha$ that $\mathcal{CRD}(2^0_x=y)$, where $x=|\alpha|$ and $y=|\overline{\mathsf{V}}(\alpha)|$.  

On the other hand, inside $\mathsf{EA}$ for numbers $x\in \mathcal{S}$ we could define function $\mathsf{on}(x)$: 
$$\mathsf{on}(0)=0;\;\;\;\; \mathsf{on}(x+1)=\mathsf{on}(x)+2^{\mathsf{on}(x)}.$$
It is well-defined since $\mathsf{on}(x)<2^0_{x+1}$. The intuition behind $\mathsf{on}$ is that it maps a number $x$ to the von Neumann ordinal $x$ with respect to $\in_{\mathsf{Ack}}$.

We observe that $x\longmapsto|x|$ and $\mathsf{on}$ are inverse to each other in the following sense. Recall that $k$ is a $\mathsf{EA}$-definable isomorphism between the interpretation $id_{\mathsf{EA}}$ and $\mathcal{ACK}\circ\mathcal{CRD}$ and $l$ is a $\mathsf{EA}$-definable isomorphism between the interpretation $id_{\mathsf{EA}^{\mathsf{set}}}$ and $\mathcal{CRD}\circ\mathcal{ACK}$; both $k$ and $l$ were defined in Section \ref{bi-interpretabilit_section}.  From one side, in $\mathsf{EA}$ we could prove by induction on  $x\in \mathcal{S}$ that $k(|\mathsf{on}(x)|_{\mathsf{Ack}})=x$, where $y\longmapsto|y|_{\mathsf{Ack}}$ is the cardinality function according to the interpretation $\mathcal{ACK}$. From the other side, in $\mathsf{EA}^{\mathsf{set}}$ by induction on ordinals $\alpha$ we show that $l(\mathsf{on}_{\mathsf{Crd}}(|\alpha|))=\alpha$, where $\mathsf{on}_{\mathsf{Crd}}$ is the $\mathsf{on}$ function inside the interpretation $\mathcal{CRD}$.

Now we conclude that the theory $\mathsf{EA}$ proves that $\mathsf{on}$ is a bijection between $\mathcal{S}$ and ordinals according to $\mathcal{ACK}$ interpretation and that $\mathsf{EA}^{\mathsf{set}}$ proves that $x\longmapsto |x|$ is a bijection between the class of ordinals $\mathsf{ON}$ and the cut $\mathcal{S}^{\mathcal{CRD}}$.

By induction we prove in $\mathsf{EA}^{\mathsf{set}}$ that $x\longmapsto |x|$ is an isomorphism between predicate-only arithmetic on ordinals and the predicate-only arithmetic on elements of $\mathcal{S}$ inside $\mathcal{CRD}$. By induction we prove in $\mathsf{EA}$ that $\mathsf{on}$ is an isomorphism between predicate-only arithmetic on $\mathcal{S}$ and the predicate-only arithmetic on ordinals according to the interpretation $\mathcal{ACK}$. 

The last fact about isomorphism together with Theorem \ref{bi-interpretability_theorem} concludes the proof\end{proof}

Combining Lemma \ref{HCST_EA^set_ON} and Lemma \ref{EA_EA^set_ON} we get
\begin{lem}\label{HCST_EA_ON}
  For each $\Pi_1^{\mathsf{pred}}$ sentence   $\varphi$
  $$\mathsf{H}^{\omega}\vdash \mathcal{NAT}(\varphi)\iff \mathsf{EA}\vdash \varphi^{\mathcal{S}}.$$
\end{lem}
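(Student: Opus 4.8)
The plan is to factor the desired equivalence through the intermediate theory $\mathsf{EA}^{\mathsf{set}}$ together with its ordinal interpretation $\mathcal{ON}$ of predicate-only arithmetic, which serves as a common pivot for the two biconditionals already established in the preceding lemmas. Both Lemma \ref{HCST_EA^set_ON} and Lemma \ref{EA_EA^set_ON} are stated for an arbitrary predicate-only sentence $\varphi$, and each of them has $\mathsf{EA}^{\mathsf{set}}\vdash \mathcal{ON}(\varphi)$ as one of its two sides. Hence the entire argument reduces to a transitivity of biconditionals, with $\mathsf{EA}^{\mathsf{set}}$ mediating between the higher-order set theory $\mathsf{H}^{\omega}$ and the arithmetic $\mathsf{EA}$.

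Concretely, first I would invoke Lemma \ref{HCST_EA^set_ON}, which yields
$$\mathsf{H}^{\omega}\vdash \mathcal{NAT}(\varphi)\iff \mathsf{EA}^{\mathsf{set}}\vdash \mathcal{ON}(\varphi).$$
Then I would invoke Lemma \ref{EA_EA^set_ON}, read in the direction
$$\mathsf{EA}^{\mathsf{set}}\vdash \mathcal{ON}(\varphi)\iff \mathsf{EA}\vdash \varphi^{\mathcal{S}}.$$
Chaining the two equivalences through the shared middle statement $\mathsf{EA}^{\mathsf{set}}\vdash \mathcal{ON}(\varphi)$ immediately gives
$$\mathsf{H}^{\omega}\vdash \mathcal{NAT}(\varphi)\iff \mathsf{EA}\vdash \varphi^{\mathcal{S}},$$
which is exactly the claim. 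Note that the class $\Pi_1^{\mathsf{pred}}$ to which $\varphi$ is restricted is precisely the hypothesis class of Lemma \ref{HCST_EA^set_ON}, while Lemma \ref{EA_EA^set_ON} holds for all predicate-only sentences; thus no restriction is lost in the composition.

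Since both preceding lemmas are taken as given, there is no genuine obstacle at this final stage: all the substantive content has already been carried out, namely the proof-to-$\mathsf{EA}^{\mathsf{set}}$-proof transformation and counter-model construction underlying Lemma \ref{HCST_EA^set_ON}, and the synchronization of the superexponential cut $\mathcal{S}$ with the set-theoretic ordinals via the bi-interpretation of Theorem \ref{bi-interpretability_theorem} underlying Lemma \ref{EA_EA^set_ON}. The only point worth checking is purely bookkeeping: that the embedding $\mathcal{ON}$ of $\varphi$ appearing in the two lemmas is literally the same, so that the middle terms match syntactically and the composition of biconditionals is legitimate.
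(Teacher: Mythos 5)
Your proposal is correct and matches the paper exactly: the paper derives Lemma \ref{HCST_EA_ON} precisely by chaining Lemma \ref{HCST_EA^set_ON} and Lemma \ref{EA_EA^set_ON} through the common middle statement $\mathsf{EA}^{\mathsf{set}}\vdash \mathcal{ON}(\varphi)$. Your observation about the $\Pi_1^{\mathsf{pred}}$ restriction coming only from Lemma \ref{HCST_EA^set_ON} is also consistent with how the paper states the two ingredient lemmas.
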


\section{Consistency Proof} \label{Non-god_section} 
We fix some natural arithmetization of many sorted first-order logic in arithmetic. Typical arithmetizations of logic that one could find in the literature (for example  \cite{HajPud17}) are arithmetizations of one sorted first-order logic, or just the first-order arithmetical language. However, there are no essential differences between the arithmetizations of one sorted and many sorted logics, thus we will not develop the details of this kind of arithmetization in the present paper.

As it will bee seen from the proofs, our results about provability of consistency are rather robust with respect to the choice of particular formula expressing the fact that something is a proof of contradiction. We need the theory $\mathsf{EA}$ to be able to naturally work with the formulas and proofs. In addition $\mathsf{EA}$ should  be able to prove that for each proof $p$ and formula $\varphi$  (we identify formulas and proofs with their G\"odel numbers) $p\ge 2|\varphi|$, where $|\varphi|$ is the length of $\varphi$. And $\mathsf{EA}$ should prove that for each proof $p$ we have $p\ge 2t_p$, where $t_p$ is the number of distinct types of objects used in $p$. We note that those are very mild assumptions. For example, this conditions are verified if $\mathsf{EA}$ proves that $\varphi\ge 2|\varphi|$, for each formula $\varphi$ and that $p\ge \varphi$, for each proof $p$ and formula $\varphi$ in it. 

Recall that we treat $\mathsf{H}^{\omega}_{<\omega}$ as a many sorted first-order theory. It is fairly obvious that the set of axioms of $\mathsf{H}^{\omega}_{<\omega}$ is Kalmar elementary, i.e. that it is definable by an arithmetical $\Delta_0$ formula. We have $\Sigma_1$ formula $\mathsf{PrfCnt}_{\mathsf{H}^{\omega}_{<\omega}}(p)$ that is a formalization of
\begin{center} \it $p$ is a G{\"o}del number of a Hilbert-style proof of $\exists x\;\lnot x=x$ from axioms of $T$.\end{center}
The formula $\mathsf{Con}(\mathsf{H}^{\omega}_{<\omega})$ is $\forall p\;\lnot \mathsf{PrfCnt}_{\mathsf{H}^{\omega}_{<\omega}}(p)$.

We would like to prove consistency of $\mathsf{H}^{\omega}_{<\omega}$ in $\mathsf{H}^{\omega}$. But we have only the embedding of predicate-only arithmetic language in $\mathsf{H}^{\omega}$ rather than the full arithmetic language. Thus we $\mathsf{EA}$-equivalently transform formula $\mathsf{PrfCnt}_{\mathsf{H}^{\omega}_{<\omega}}(p)$ to a $\Sigma_1^{\mathsf{pred}}$  formula $\mathsf{PrfCnt}^{\mathsf{pred}}_{\mathsf{H}^{\omega}_{<\omega}}(p)$ and $\mathsf{EA}$-equivalently transform the formula $\mathsf{Con}(\mathsf{H}^{\omega}_{<\omega})$ to the form $\mathsf{Con}^{\mathsf{pred}}(\mathsf{H}^{\omega}_{<\omega})$: $$\forall p\; \lnot \mathsf{PrfCnt}^{\mathsf{pred}}_{\mathsf{H}^{\omega}_{<\omega}}(p).$$

\begin{lem} \label{Con_in_EA} $\mathsf{EA}$ proves $(\mathsf{Con}^{\mathsf{pred}}(\mathsf{H}^{\omega}_{<\omega}))^{\mathcal{S}}$.\end{lem}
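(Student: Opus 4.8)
The plan is to argue entirely inside $\mathsf{EA}$ and to reduce the claim to the construction of a single finite model. First I would unfold $(\mathsf{Con}^{\mathsf{pred}}(\mathsf{H}^{\omega}_{<\omega}))^{\mathcal{S}}$. Since ``$p$ is a Hilbert-style proof of $\exists x\,\lnot x=x$'' is a bounded condition on $p$ (every formula and justification occurring in a proof has G\"odel number below $p$), the formula $\mathsf{PrfCnt}^{\mathsf{pred}}_{\mathsf{H}^{\omega}_{<\omega}}(p)$ has all of its witnesses bounded by $p$; as $\mathcal{S}$ is downward closed, its relativization to $\mathcal{S}$ agrees with $\mathsf{PrfCnt}_{\mathsf{H}^{\omega}_{<\omega}}(p)$ for every $p\in\mathcal{S}$. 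Hence it suffices to prove in $\mathsf{EA}$ that for every $p\in\mathcal{S}$ the number $p$ is not a $\mathsf{H}^{\omega}_{<\omega}$-proof of $\exists x\,\lnot x=x$. I argue by contradiction: fix $p\in\mathcal{S}$ and assume $\mathsf{PrfCnt}_{\mathsf{H}^{\omega}_{<\omega}}(p)$ holds.

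Next I would build a finite model $\mathcal{M}$ satisfying exactly those axioms of $\mathsf{H}^{\omega}_{<\omega}$ that occur in $p$. Let $t_p$ be the number of distinct types occurring in $p$ and $N$ the largest $n$ with $\exists x\,\mathsf{Nmb}_n(x)$ occurring in $p$ (with $N=0$ if none does); set $k=N+1$. Listing the types that occur in $p$ in increasing order and interpreting them, respectively, by the genuine levels $\mathsf{V}_k,\mathcal{P}(\mathsf{V}_k),\dots,\mathcal{P}^{t_p-1}(\mathsf{V}_k)$ (so that sort $0$ becomes $\mathsf{V}_k$), with $\in$ read as true membership and $\overline{\mathsf{V}}(x)$ as the least level $\mathsf{V}_j\subseteq\mathsf{V}_k$ with $x\subseteq\mathsf{V}_j$, yields a structure $\mathcal{M}$. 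This index compression is harmless because the membership symbol only connects consecutive types, so whenever it occurs both of its types occur in $p$ and remain adjacent after compression. The domains then have sizes $2^0_k,\dots,2^0_{k+t_p-1}$. From the mild assumptions $p\ge 2|\varphi|$ and $p\ge 2t_p$ one gets $N\le p/2$ and $t_p\le p/2$, whence $k+t_p\le p+1$; since $\mathcal{S}$ is closed under successor and $p\in\mathcal{S}$, each of these sizes is a defined number, so the whole finite structure $\mathcal{M}$, a tower of superexponential but defined height, exists as an object of $\mathsf{EA}$.

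I would then check that $\mathcal{M}$ satisfies every axiom appearing in $p$: extensionality holds because the domains are genuine sets; every instance of comprehension and of separation holds because the higher sorts are full powersets and $\mathsf{V}_k$ is closed under subsets; the defining axiom for $\overline{\mathsf{V}}$ holds because on a genuine level $\overline{\mathsf{V}}$ is the true rank-level function; and each axiom $\exists x\,\mathsf{Nmb}_n(x)$ with $n\le N$ holds because the von Neumann ordinal $n$ lies in $\mathsf{V}_k$. To turn this into a contradiction I would formalize soundness for the bounded situation: in $\mathsf{EA}$ construct a partial satisfaction predicate $S$ for all formulas of G\"odel number $\le p$ over the finite $\mathcal{M}$, verify the Tarskian clauses, that all logical axioms and all axioms-in-$p$ are $S$-true and that the inference rules preserve $S$-truth, and then prove by $\Delta_0$-induction along the at most $p$ lines of $p$ that every line is $S$-true. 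In particular the last line $\exists x\,\lnot x=x$ would be $S$-true, contradicting the fact that $\mathcal{M}$ is a genuine structure with equality.

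The main obstacle is the formalization of this soundness argument inside $\mathsf{EA}$, that is, establishing the existence of the satisfaction predicate $S$ together with the attendant resource bookkeeping. The decisive point is that the only construction genuinely requiring $p\in\mathcal{S}$ is the domain size $2^0_{k+t_p}$; once this is a defined number, the set of assignments (at most $|\mathcal{M}|^{\,p}$ of them) and the satisfaction set itself are obtained from $|\mathcal{M}|$ and $p$ by a standard number of further exponentiations, all of which are total in $\mathsf{EA}$. This is exactly where the robustness assumptions stated before the lemma are used: any reasonable arithmetization keeps each proof larger than twice the length of every formula in it and twice its number of types, which is all that the height estimate $k+t_p\le p+1$ requires.
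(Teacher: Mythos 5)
Your proposal is correct and follows essentially the same route as the paper: reduce to showing that no $p\in\mathcal{S}$ proves $\exists x\,\lnot x=x$, build a finite model whose sorts are consecutive finite von Neumann levels (of superexponential but, since $p\in\mathcal{S}$, defined size) satisfying exactly the axioms occurring in $p$, and conclude by a formalized soundness induction using a bounded ($\Delta_0$, thanks to exponentiation) satisfaction predicate. The only cosmetic difference is that the paper realizes the model concretely inside arithmetic via Ackermann's membership $\in_{\mathsf{Ack}}$ and $\overline{\mathsf{V}}_{\mathsf{Ack}}$, whereas you speak of ``genuine'' levels existing as objects of $\mathsf{EA}$ --- which, made precise, is the same coding.
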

\begin{proof}Let us reason in $\mathsf{EA}$. Clearly, it would be enough to show that $\lnot\mathsf{PrfCnt}_{\mathsf{H}^{\omega}_{<\omega}}(p)$, for all $p\in \mathcal{S}$. 

For a contradiction we assume that there exists a proof $p\in\mathcal{S}$ of $\exists x\;\lnot x=x$ from axioms of $\mathsf{H}^{\omega}_{<\omega}$. Let $n$ be the greatest number such that the proof $p$ uses the axiom $\exists x\; \mathsf{Nmb}_n(x)$. And suppose that the types used in $p$ are $k_0,\ldots,k_{m-1}$. From the conditions on G\"odel numbering that we have outlined above we see that $p\ge 2n$ and $p\ge 2m$. And since $p\in \mathcal{S}$, the value $2^0_{n+m}$ is defined.

Let us now define a finite model $\mathfrak{M}$ of all the axioms used in $p$. The domain of $k_0$ is the $n$-th level of von Neumann hierarchy $\mathsf{V}_{n+1}$ with respect to $\in_{\mathsf{Ack}}$ (it is the set of all numbers $\le 2^0_{n+1}-1$). The domain of the sort $k_i$ is $\mathsf{V}_{n+i}$ with respect to $\in_{\mathsf{Ack}}$. We interpret $\in$ and all relevant $\ain_i$ predicates as $\in_{\mathsf{Ack}}$, we interpret $\overline{\mathsf{V}}$ by $\overline{\mathsf{V}}_{\mathsf{Ack}}$ from Section \ref{bi-interpretabilit_section}. A straightforward check shows that the defined structure $\mathfrak{M}$ indeed satisfies all the axioms used in $p$. And next we show by induction on subproofs of $p$ that all the formulas in $p$ are satisfied in $\mathfrak{M}$. We note that the latter is possible since the property of a formula to be true in $\mathfrak{M}$ is $\Delta_0$ (this is due to the fact that our version of arithmetic language contains exponentiation). Thus $\exists x\;\lnot x=x$ is satisfied in $\mathfrak{M}$, contradiction.\end{proof}

\begin{thm}\label{H^omega_nongodelian}$\mathsf{H}^{\omega}$ proves $\mathcal{NAT}(\mathsf{Con}^{\mathsf{pred}}(\mathsf{H}^{\omega}_{<\omega}))$.\end{thm}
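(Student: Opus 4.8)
The plan is to obtain the theorem as an immediate corollary of Lemma \ref{HCST_EA_ON} together with Lemma \ref{Con_in_EA}. The key observation I would make first is that the sentence $\mathsf{Con}^{\mathsf{pred}}(\mathsf{H}^{\omega}_{<\omega})$ is, by its very construction, a $\Pi_1^{\mathsf{pred}}$ sentence. Indeed, it has the shape $\forall p\;\lnot\mathsf{PrfCnt}^{\mathsf{pred}}_{\mathsf{H}^{\omega}_{<\omega}}(p)$, where the matrix $\mathsf{PrfCnt}^{\mathsf{pred}}_{\mathsf{H}^{\omega}_{<\omega}}(p)$ was produced precisely as an $\mathsf{EA}$-equivalent $\Sigma_1^{\mathsf{pred}}$ transform of the $\Sigma_1$ proof predicate $\mathsf{PrfCnt}_{\mathsf{H}^{\omega}_{<\omega}}(p)$. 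Negating a $\Sigma_1^{\mathsf{pred}}$ formula and prefixing a single universal number quantifier yields a $\Pi_1^{\mathsf{pred}}$ sentence, so Lemma \ref{HCST_EA_ON} is applicable with $\varphi := \mathsf{Con}^{\mathsf{pred}}(\mathsf{H}^{\omega}_{<\omega})$.

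Having established the syntactic form, I would simply chain the two lemmas. By Lemma \ref{Con_in_EA} we already have $\mathsf{EA}\vdash(\mathsf{Con}^{\mathsf{pred}}(\mathsf{H}^{\omega}_{<\omega}))^{\mathcal{S}}$, i.e. $\mathsf{EA}\vdash\varphi^{\mathcal{S}}$ for the $\varphi$ fixed above. Applying the right-to-left implication of Lemma \ref{HCST_EA_ON} to this $\varphi$ then gives $\mathsf{H}^{\omega}\vdash\mathcal{NAT}(\varphi)$, which is exactly $\mathsf{H}^{\omega}\vdash\mathcal{NAT}(\mathsf{Con}^{\mathsf{pred}}(\mathsf{H}^{\omega}_{<\omega}))$, the desired conclusion. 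Thus the entire argument is a single substitution into the conservation equivalence already proved.

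There is essentially no genuine obstacle here, since the substantive work has been discharged in the earlier lemmas: the finite-model construction inside $\mathsf{EA}$ that verifies consistency on the superexponential cut (Lemma \ref{Con_in_EA}) and the conservativity transfer between $\mathsf{H}^{\omega}$ and the cut-relativized $\mathsf{EA}$ (Lemma \ref{HCST_EA_ON}). The only point deserving a remark is the bookkeeping that the $\mathsf{EA}$-equivalent transformation producing $\mathsf{PrfCnt}^{\mathsf{pred}}$ really keeps the formula within $\Sigma_1^{\mathsf{pred}}$, so that $\mathsf{Con}^{\mathsf{pred}}(\mathsf{H}^{\omega}_{<\omega})$ falls in the class $\Pi_1^{\mathsf{pred}}$ to which Lemma \ref{HCST_EA_ON} applies; given the robustness of the arithmetization discussed at the start of this section, this verification is routine. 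I would therefore present the proof as a two-sentence deduction, emphasizing only the $\Pi_1^{\mathsf{pred}}$ classification and the combination of the two lemmas.
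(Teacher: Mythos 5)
Your proposal is correct and coincides with the paper's own proof: the paper likewise notes that $\mathsf{Con}^{\mathsf{pred}}(\mathsf{H}^{\omega}_{<\omega})$ is (equivalent to) a $\Pi_1^{\mathsf{pred}}$ sentence and then obtains the theorem by combining Lemma \ref{Con_in_EA} with the right-to-left direction of Lemma \ref{HCST_EA_ON}. Your extra remark about checking that the transformation to $\mathsf{PrfCnt}^{\mathsf{pred}}$ stays within $\Sigma_1^{\mathsf{pred}}$ is exactly the bookkeeping the paper leaves implicit.
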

\begin{proof} Since $\mathsf{Con}^{\mathsf{pred}}(\mathsf{H}^{\omega}_{<\omega})$ is equivalent to $\Pi_1^{\mathsf{pred}}$ sentence, we get the theorem by combination of Lemma \ref{HCST_EA_ON} and Lemma \ref{Con_in_EA}.\end{proof}

\begin{rem}
  Observe that in the proof of Theorem \ref{H^omega_nongodelian} we used only one direction of Lemma \ref{HCST_EA_ON}. Namely we employed the implication 
  $$\mathsf{EA}\vdash \varphi^{\mathcal{S}}\;\Rightarrow\; \mathsf{H}^{\omega}\vdash \mathcal{NAT}(\varphi),$$
  for $\Pi_1^{\mathsf{pred}}$ sentences $\varphi$. The inspection of the proof shows that in order to establish just this direction of Lemma \ref{HCST_EA_ON} it was possible to avoid the development of the bi-interpretation between $\mathsf{EA}$ and $\mathsf{EA}^{\mathsf{set}}$. And just develop the appropriate interpretation of $\mathsf{EA}$ in $\mathsf{EA}^{\mathsf{set}}$.  However, we consider Theorem \ref{bi-interpretability_theorem}, Lemma \ref{EA_EA^set_ON}, and Lemma \ref{HCST_EA_ON} to be interesting on their own merit and thus include them into the paper.
\end{rem}

\section{Theory $\mathsf{H}$ is non-G{\"o}delian} \label{H_appendix}
In this section we sketch the proof of the fact that theory $\mathsf{H}_{<\omega}$ proves its own consistency. The reasons of why it is the case are roughly speaking the same as for the case of the theory $\mathsf{H}^{\omega}_{<\omega}$. We dedicated the main part of the paper to the case of higher-order theory since the conservation results for $\mathsf{H}^{\omega}$ are in our opinion more appealing and since the development of ordinal arithmetic in $\mathsf{H}^{\omega}$  is more straightforward.  

Recall that the theory $\mathsf{H}$ is a first-order theory with equality which signature contains the binary membership predicate $\in$ and the unary function $\overline{\mathsf{V}}$. Axioms of $\mathsf{H}$:
\begin{enumerate}
    \item  $x=y \mathrel{\leftrightarrow} \forall z(z\in x\mathrel{\leftrightarrow} z\in y))$ (Extensionality);
\item $\exists y \forall z(z\in y\mathrel{\leftrightarrow}z\in x \land \varphi(z))$, where $\varphi$ range over formulas without free occurrences of the variable $y$ (Separation); 
\item $y\in \overline{\mathsf{V}}(x)\mathrel{\leftrightarrow}(\exists z\in x)(y\subseteq \overline{\mathsf{V}}(z))$ (Defining Axiom for $\overline{\mathsf{V}}$).
\end{enumerate}

We will start with proving the analogue of Lemma \ref{foundation_in_H^omega}. However for this we will need to work with classes in theory $\mathsf{H}$. We do it in the same style as in other first-order set theory: classes are collections of sets $\{x\mid \varphi(x)\}$, where $\varphi(x)$ is some first-order formula (possibly with parameters). In this approach, of course, we could not do quantifications over classes.

\begin{lem} Theory $\mathsf{H}$ prove any instances of the scheme of $\varepsilon$-induction:
$$\forall x((\forall y\in x) \varphi(y)\to \varphi(x))\to \forall x \;\varphi(x).$$\end{lem}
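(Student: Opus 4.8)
The plan is to imitate the proof of Lemma~\ref{foundation_in_H^omega}, but to carry it out for a single fixed instance of the scheme, since in the first-order theory $\mathsf{H}$ we cannot quantify over classes and therefore cannot form $\mathsf{WF}$ as the intersection of all progressive classes. So I would fix the formula $\varphi(x)$ (possibly with parameters), assume its progressivity premise $\forall x((\forall y\in x)\varphi(y)\to\varphi(x))$, and assume towards a contradiction that there is a bad set $a$ with $\lnot\varphi(a)$. Taking the contrapositive of the premise isolates the one fact that drives the argument: every bad set has a bad element, i.e. $\lnot\varphi(y)\to(\exists z\in y)\lnot\varphi(z)$.

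The main obstacle is that the higher-order argument used the minimality of $\mathsf{WF}$ in two places: to place $\mathsf{WF}$ inside the auxiliary class $A$, and to place it inside the progressive class $\{x\mid x\notin x\}$ in order to conclude $\mathsf{wf}\notin\mathsf{wf}$. Neither appeal is available in $\mathsf{H}$. I would circumvent both by folding the no-self-membership condition directly into the auxiliary class, so that minimality is never needed. Concretely, I would work with the definable class $A=\{x\mid x\notin x\ \land\ \forall y(\lnot\varphi(y)\to x\in\overline{\mathsf{V}}(y))\}$, whose defining formula is a legitimate first-order formula. Since every element of $A$ lies in $\overline{\mathsf{V}}(y)$ for \emph{every} bad $y$, and $a$ is bad, we get at once $A\subseteq\overline{\mathsf{V}}(a)$.

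I would then verify that $\mathsf{H}$ proves $A$ to be progressive, i.e. $\forall x((\forall w\in x)(w\in A)\to x\in A)$. Given $x\subseteq A$: the condition $x\notin x$ holds because $x\in x$ would force $x\in A$ and hence $x\notin x$; and for an arbitrary bad $y$ we choose a bad $z\in y$ (here the premise is used), observe that $x\subseteq\overline{\mathsf{V}}(z)$ since every $w\in x$ lies in $A$, and conclude $x\in\overline{\mathsf{V}}(y)$ \emph{straight from the defining axiom for} $\overline{\mathsf{V}}$. I would stress that this is the only point at which $\overline{\mathsf{V}}$ enters and that one must use its defining axiom directly: the derived properties of $\overline{\mathsf{V}}$ collected in Lemma~\ref{vcl} are off-limits here, since their proofs already presuppose $\varepsilon$-induction, which is precisely what we are establishing.

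Finally, using Separation on the set $\overline{\mathsf{V}}(a)$ I would form the genuine set $\mathsf{wf}=\{x\in\overline{\mathsf{V}}(a)\mid x\in A\}$, which, because $A\subseteq\overline{\mathsf{V}}(a)$, has exactly the elements of $A$. Then $\mathsf{wf}\subseteq A$, so progressivity of $A$ yields $\mathsf{wf}\in A$; this simultaneously gives $\mathsf{wf}\notin\mathsf{wf}$ (a conjunct of membership in $A$) and $\mathsf{wf}\in\mathsf{wf}$ (since $\mathsf{wf}\in A$ and, as $A\subseteq\overline{\mathsf{V}}(a)$, also $\mathsf{wf}\in\overline{\mathsf{V}}(a)$, so $\mathsf{wf}$ satisfies the defining condition of $\mathsf{wf}$), the desired contradiction. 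I expect the delicate part to be exactly the bookkeeping that replaces the two uses of minimality of $\mathsf{WF}$ by the single self-referential class $A$, together with checking that no step silently invokes an unavailable property of $\overline{\mathsf{V}}$.
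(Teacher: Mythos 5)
Your proposal is correct, but it takes a genuinely different route from the paper's. The paper keeps the structure of the higher-order proof of Lemma \ref{foundation_in_H^omega} intact and instead repairs the one impredicative step: it gives an explicit first-order definition of $\mathsf{WF}$ (those $x$ for which $\overline{\mathsf{V}}(x)$ is transitive, $x\subseteq\overline{\mathsf{V}}(x)$, and every non-empty subset of $\overline{\mathsf{V}}(x)$ has an $\in$-minimal element), proves in $\mathsf{H}$ that this class is progressive, and proves as a theorem \emph{scheme} that it is contained in every definable progressive class; with that minimality scheme in hand, the rest of the higher-order argument is rerun word for word. You instead dispense with $\mathsf{WF}$ altogether and argue per instance: the two appeals to minimality in the higher-order proof (placing $\mathsf{WF}$ inside $A$, and inside the progressive class $\{x\mid x\notin x\}$) are absorbed into your auxiliary class $A$ by replacing the condition ``$y$ outside $\mathsf{WF}$'' with ``$\lnot\varphi(y)$'' and by adding the Russell conjunct $x\notin x$ to the definition of $A$. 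Your verification is sound: the progressivity of $A$ uses only the contrapositive of the induction premise and the defining axiom for $\overline{\mathsf{V}}$, the set $\mathsf{wf}$ is legitimately formed by Separation inside $\overline{\mathsf{V}}(a)$, and the final clash between $\mathsf{wf}\in\mathsf{wf}$ and $\mathsf{wf}\notin\mathsf{wf}$ goes through exactly as you describe; you are also right that invoking Lemma \ref{vcl} here would be circular and that your argument avoids it. As for what each approach buys: the paper's route isolates a reusable object of independent interest --- a first-order characterization of the well-founded part together with its minimality scheme --- at the cost of three auxiliary progressivity checks; your route is shorter and more self-contained, needing a single auxiliary class per instance and no preliminary analysis of $\mathsf{WF}$.
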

\begin{proof}We are going to adopt the proof of Lemma \ref{foundation_in_H^omega}. The only obstacle is that the least progressive class $\mathsf{WF}$ have been defined as the intersection of all progressive classes and this definition could not be directly mimicked in a first-order theory.   To address this we will give an alternative definition of $\mathsf{WF}$ (by a first-order formula), prove in $\mathsf{H}$ that $\mathsf{WF}$ is a progressive class, and prove in $\mathsf{H}$ a scheme of a theorem that if $C$ is a progressive class then $\mathsf{WF}\subseteq C$. With the use of this kind of definition of the class $\mathsf{WF}$ we could directly adopt other parts of the proof of Lemma \ref{foundation_in_H^omega} for the case of $\mathsf{H}$. 

 We define the class $\mathsf{WF}$ to be the intersection of the classes 
\begin{enumerate}
    \item $\mathsf{WF}^{\mathsf{pre}}_1$ consisting of of all $x$ such that $\overline{\mathsf{V}}(x)$ is transitive; 
    \item $\mathsf{WF}^{\mathsf{pre}}_2$ consisting of  of all $x$ such that $x\subseteq \overline{\mathsf{V}}(x)$;
    \item \label{wf_of_A} $\mathsf{WF}^{\mathsf{pre}}_3$ consisting of of all $x$ such that there is a $\in$-minimal element in any non-empty $y\subseteq \overline{\mathsf{V}}(x)$.
\end{enumerate}
 Using the defining axiom for $\overline{\mathsf{V}}$ it is easy to show that $\mathsf{WF}^{\mathsf{pre}}_1$, $\mathsf{WF}^{\mathsf{pre}}_2$, and $\mathsf{WF}^{\mathsf{pre}}_3$ are progressive classes. Thus $\mathsf{WF}$ is progressive. Let us finally show that $\mathsf{WF}$ is the least progressive class. We consider a progressive class $C$, set $x\in \mathsf{WF}$ and claim that $x\in C$. Since $x\subseteq \overline{\mathsf{V}}(x)$ and $C$ is progressive it would be enough to show that $\overline{\mathsf{V}}(x)\subseteq C$. To achieve the latter goal we assume for a contradiction that the set $\overline{\mathsf{V}}(x)\setminus C$ is non-empty. We know that any non-empty subset of $\overline{\mathsf{V}}(x)$ have a $\in$-minimal element. Let $y\in \overline{\mathsf{V}}(x)\setminus C$ be a $\in$-minimal element. We have $y\subseteq C$. The transitivity of the set  $\overline{\mathsf{V}}(x)$ implies that $y\subseteq C$. And by progressivity of $C$ we get $y\in C$, contradiction.\end{proof}

The standard definitions (as in Section \ref{ordinal_arithmetic_in_H_omega}) of classes $\mathsf{Trans},\mathsf{On},\mathsf{Nat}$, order $<$, and constant $0$ could be carried out in $\mathsf{H}$ in a standard fashion. However, we could not construct the functions of ordinal arithmetic in $\mathsf{H}$ exactly in the same way we have done it  in $\mathsf{H}^{\omega}$ or $\mathsf{EA}^{\mathsf{set}}$. Let us focus on the case of addition function, since multiplication and exponentiation functions could be defined in essentially the same way. Unlike $\mathsf{H}^{\omega}$, in $\mathsf{H}$ we do not have access to quantification over partial class-functions $+'\colon \mathsf{On}\times \mathsf{On}\to \mathsf{On}$. And unlike $\mathsf{EA}^{\mathsf{set}}$, in $\mathsf{H}$ we could not construct set-size partial function $+_{\delta}\colon \delta\times \delta\to \delta$: the set $+_{\delta}$ is constructed as a subset of $\mathcal{P}(\mathcal{P}(\mathcal{P}(\mathcal{P}(\delta))))$. 

However, it is possible to modify the approach base of the function $+_{\delta}$ to the case of $\mathsf{H}$. We consider the classes $\mathsf{V}^{-n}$: the class $\mathsf{V}^{-0}=\mathsf{V}$ and the class $\mathsf{V}^{-(n+1)}$ is the class of all sets $x$ such that $x\in y$ for some $y\in V^{-n}$. Due to the totality of $\overline{\mathsf{V}}$ function, for any $x\in \mathsf{V}^{-(n+1)}$ the powerset $\mathcal{P}(x)$ is defined and lies in $\mathsf{V}^{-n}$. We denote by $\mathsf{On}^{-n}$ the class $\mathsf{On}\cap\mathsf{V}^{-n}$ and by $\mathsf{Nat}^{-n}$ the class $\mathsf{Nat}\cap \mathsf{V}^{-n}$. By $\varepsilon$-induction we prove existence and uniqueness for all $\delta\in \mathsf{On}^{-4}$ of the partial set-size functions $+_{\delta}\colon \delta\times\delta\to \delta$ such that
\begin{enumerate}
\item $\alpha+_{\delta}\beta$ is defined iff $\alpha,\beta\in \delta$, for all $\gamma<\beta$ the value $\alpha+_\delta\gamma$ is defined, and $\sup(\{\alpha\}\cup\{S(\alpha+_\delta\gamma)\mid \gamma<\beta\})\in \delta$;
\item if $\alpha+_\delta\beta$ is defined then $\alpha+_\delta\beta=\sup(\{\alpha\}\cup\{S(\alpha+_\delta\gamma)\mid \gamma<\beta\})$.
\end{enumerate}
Next we define the class-size function $+^{-5}\colon \mathsf{On}^{-5}\times\mathsf{On}^{-5}\to \mathsf{On}^{-5}$ as the union of all partial addition functions $+_{\delta}$, for $\delta\in \mathsf{On}^{-4}$. Observe that the difference between $\mathsf{On}^{-5}$ and $\mathsf{On}$ is that there could be at most $5$ topmost ordinals that are in $\mathsf{On}$ but not in $\mathsf{On}^{-5}$. Hence by separate consideration of this topmost ordinals we could give a first-order definition of a partial addition function $+\colon \mathsf{On}\times \mathsf{On}\to \mathsf{On}$ such that for all $\alpha,\beta\in \mathsf{On}$ we have
$$\alpha+\beta=\sup(\{\alpha\}\cup \{S(\alpha+\gamma)\mid \gamma<\beta\}).$$ By $\varepsilon$-induction we prove that $+$ is the unique class function that satisfies this recursive definition.

After development of ordinal arithmetic in $\mathsf{H}$ we modify  the interpretation $\mathcal{NAT}$ with the definitions that work in $\mathsf{H}$ (in $\mathsf{H}^{\omega}$ it is easy to prove the equivalence of the definitions in the new version of $\mathcal{NAT}$ and the version of $\mathcal{NAT}$ from Section \ref{ordinal_arithmetic_in_H_omega}). Hence we have an embedding of predicate-only arithmetical language into $\mathsf{H}$. We also denote by $\mathcal{NAT}^{-n}$ the modification of the interpretation $\mathcal{NAT}$ with the domain of the interpretation being the class $\mathsf{Nat}^{-n}=\mathsf{Nat}\cap \mathsf{V}^{-n}$ rather than the class $\mathsf{Nat}$.

The following result is a version of Lemma \ref{HCST_EA_ON} for the case of $\mathsf{H}$:
\begin{lem} \label{H_and_EA_eq} Suppose $\varphi$ is a $\Pi_1^{\mathsf{pred}}$ sentence. Then
  $$\mathsf{EA}\vdash \varphi^{\mathcal{S}}\iff \mbox{ for some $n$ we have }\mathsf{H}\vdash \mathcal{NAT}^{-n}(\varphi).$$
\end{lem}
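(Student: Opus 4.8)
The plan is to route the argument through $\mathsf{EA}^{\mathsf{set}}$, exactly as Lemma~\ref{HCST_EA_ON} was obtained for $\mathsf{H}^{\omega}$. By Lemma~\ref{EA_EA^set_ON} we have $\mathsf{EA}\vdash\varphi^{\mathcal{S}}\iff\mathsf{EA}^{\mathsf{set}}\vdash\mathcal{ON}(\varphi)$, so it suffices to prove the $\mathsf{H}$-analogue of Lemma~\ref{HCST_EA^set_ON}, namely
$$\mathsf{EA}^{\mathsf{set}}\vdash\mathcal{ON}(\varphi)\iff\text{for some }n\ \mathsf{H}\vdash\mathcal{NAT}^{-n}(\varphi).$$
I would adapt the two halves of the proof of Proposition~\ref{H_and_EA^set}, keeping careful track of von Neumann levels. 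Three features are new compared to the $\mathsf{H}^{\omega}$ case: (i) $\mathsf{H}$ is first order, so its models must be handled as genuine sets rather than via higher types; (ii) ordinal arithmetic is available in $\mathsf{H}$ only on $\mathsf{On}^{-5}$, which is why the interpretation is $\mathcal{NAT}^{-n}$ rather than $\mathcal{NAT}$; and (iii) the relativizations $\mathcal{NAT}^{-n}(\varphi)$ grow \emph{weaker} as $n$ increases (since $\mathsf{Nat}^{-(n+1)}\subseteq\mathsf{Nat}^{-n}$ and $\varphi$ is universal), which is what makes the ``for some $n$'' the right quantifier.

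For the implication from right to left I would transform proofs, as in part~1 of Proposition~\ref{H_and_EA^set}. Fix a proof $p$ of $\mathcal{NAT}^{-n}(\varphi)$ in $\mathsf{H}$ and reason inside $\mathsf{EA}^{\mathsf{set}}$, where $\overline{\mathsf{V}}$ and $\mathcal{P}$ are total. Writing $\varphi=\forall\vec x\,\psi(\vec x)$, given ordinals $\vec\alpha$ I would form the transitive set model $\mathfrak{M}=\overline{\mathsf{V}}(b)$ for a $b$ chosen so that $\overline{\mathsf{V}}(b)$ lies $n$ levels above all the $\overline{\mathsf{V}}(\alpha_i)$ (possible since iterated powersets exist in $\mathsf{EA}^{\mathsf{set}}$). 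Because $\mathfrak{M}$ is a genuine level, any first-order $(\in,\overline{\mathsf{V}})$-formula relativized to $\mathfrak{M}$ becomes $\Delta_0^{\mathsf{set}}(\overline{\mathsf{V}})$, so $\mathfrak{M}$ satisfies the full separation scheme of $\mathsf{H}$ together with the remaining axioms occurring in $p$; carrying $p$ out inside $\mathfrak{M}$ yields $\mathfrak{M}\models\mathcal{NAT}^{-n}(\varphi)$. The extra $n$ levels put $\vec\alpha$ into $\mathsf{Nat}^{-n}(\mathfrak{M})$, and a routine $\varepsilon$-induction shows that the ordinal arithmetic of $\mathsf{H}$ computed inside $\mathfrak{M}$ agrees with the ambient $\mathsf{EA}^{\mathsf{set}}$ ordinal arithmetic on this region; hence absoluteness of the $\Delta_0^{\mathsf{set}}$ matrix gives $\mathcal{ON}(\psi(\vec\alpha))$ in the ambient model, and since $\vec\alpha$ was arbitrary this is $\mathcal{ON}(\varphi)$.

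For the converse I would argue by contraposition. Assume $\mathsf{H}\nvdash\mathcal{NAT}^{-n}(\varphi)$ for every $n$; in particular, as $\mathsf{V}^{-0}=\mathsf{V}$, already $\mathsf{H}\nvdash\mathcal{NAT}(\varphi)$. If $\psi$ fails at some standard tuple of numbers, the genuine hereditarily finite sets form a model of $\mathsf{EA}^{\mathsf{set}}+\lnot\mathcal{ON}(\varphi)$ and we are done. Otherwise $\mathcal{ON}(\psi(\underline{\vec m}))$ is provable in $\mathsf{EA}^{\mathsf{set}}$ for every standard tuple $\vec m$, so any model of $\mathsf{H}+\lnot\mathcal{NAT}(\varphi)$ must place its witness on a nonstandard tuple, one of whose components then exceeds $\underline M$ for each $M$; by compactness the theory $\mathsf{H}_{<\omega}+\lnot\mathcal{NAT}(\varphi)$ is consistent, yielding a \emph{tall} model $\mathfrak{M}$ in which all finite ordinals exist. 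In a tall model $\mathsf{V}^{-n}=\mathsf{V}$ for every $n$, so its hereditarily finite part $\mathsf{HF}^{\mathfrak{M}}$ is genuinely closed under $\mathcal{P}$ and $\overline{\mathsf{V}}$ and, exactly as in part~2 of Proposition~\ref{H_and_EA^set}, satisfies all the axioms of $\mathsf{EA}^{\mathsf{set}}$ (separation and adduction induction being checked by $\varepsilon$-induction), while the preserved witness refutes $\mathcal{ON}(\varphi)$.

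The delicate point, and the source of the ``for some $n$'', is the mismatch between the total arithmetic of $\mathsf{EA}^{\mathsf{set}}$, where $\mathcal{P}$ is everywhere defined and $\mathsf{V}=\mathsf{HF}$, and the merely local arithmetic of $\mathsf{H}$, whose models may possess a maximal rank at which successor, products and powersets break down; this is precisely why ordinal arithmetic in $\mathsf{H}$ lives only on $\mathsf{On}^{-5}$ and why $\mathsf{H}$ can at best prove $\mathcal{NAT}^{-n}(\varphi)$ for an $n$ growing with the arithmetical complexity of $\varphi$. In the higher-order case this was invisible, because the higher types of $\mathsf{H}^{\omega}$ supplied unboundedly many levels above any set and hence permitted the single collapse $\mathfrak{M}^{u}$ with total powerset; in the first-order setting these levels are only finitely available through the classes $\mathsf{V}^{-n}$, so recovering a genuine model of $\mathsf{EA}^{\mathsf{set}}$ forces the detour through tall models and compactness, and this is the step I expect to require the most care.
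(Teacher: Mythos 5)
Your reduction through $\mathsf{EA}^{\mathsf{set}}$ via Lemma \ref{EA_EA^set_ON}, and your right-to-left implication (running a fixed $\mathsf{H}$-proof of $\mathcal{NAT}^{-n}(\varphi)$ inside the transitive set $\overline{\mathsf{V}}(\max(\alpha_1,\ldots,\alpha_k)+n+1)$, which models $\mathsf{H}$ because formulas relativized to a level become $\Delta_0^{\mathsf{set}}(\overline{\mathsf{V}})$), coincide with the paper's argument. The gap is in the left-to-right direction.

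There you discard the hypothesis you actually need: you pass immediately to $\mathsf{H}\nvdash\mathcal{NAT}^{-0}(\varphi)=\mathcal{NAT}(\varphi)$ and never again use the failure of $\mathcal{NAT}^{-n}(\varphi)$ for $n\ge 1$. The step that breaks is the claim that in a tall model of $\mathsf{H}_{<\omega}$ one has $\mathsf{V}^{-n}=\mathsf{V}$ for every $n$. Containing all finite ordinals says nothing about the existence of levels above a given set: for example, $\mathsf{V}_{N+1}$ computed inside a nonstandard model of $\mathsf{EA}^{\mathsf{set}}$, with $N$ a nonstandard natural, is a tall model of $\mathsf{H}_{<\omega}$ possessing a greatest natural $N$, so already $\mathsf{V}^{-1}\ne\mathsf{V}$ there. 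In such a model, $\mathsf{HF}^{\mathfrak{M}}=\bigcup_{\nu\in\mathsf{Nat}}\overline{\mathsf{V}}(\nu)$ is exactly the collection of sets of rank $<N$; its naturals have a maximum $N-1$, so it is closed under neither successor nor $\mathcal{P}$ (the powerset of the ordinal $N-1$ has rank $N$), hence it is not a model of $\mathsf{EA}^{\mathsf{set}}$. Worse, the tuple refuting $\psi$ may involve the top natural $N$ itself, which is not an element of $\mathsf{HF}^{\mathfrak{M}}$ at all. Note also that if your Case B went through, it would establish the $n=0$ form of the lemma, $\mathsf{EA}\vdash\varphi^{\mathcal{S}}\Rightarrow\mathsf{H}\vdash\mathcal{NAT}(\varphi)$, a strengthening the paper conspicuously does not claim and only approximates in Corollary \ref{H_and_EA_imp} by paying with the stronger hypothesis $\mathsf{EA}\vdash\varphi^{2\mathcal{S}}$ and the whole $\mathcal{SUM}$ interpretation; this mismatch is itself a signal that the shortcut cannot work.

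What the paper does instead is use the failure of $\mathcal{NAT}^{-n}(\varphi)$ for \emph{all} $n$ to control where the witnesses sit. By compactness (add constants $c_1,\ldots,c_k$, the axioms $c_i\in\mathsf{Nat}^{-n}$ for every standard $n$, and the failure of $\psi$ at $\vec{c}$; a finite fragment mentioning indices up to $N$ is satisfied in any countermodel of $\mathcal{NAT}^{-N}(\varphi)$, since $\mathsf{Nat}^{-N}\subseteq\mathsf{Nat}^{-n}$ for $n\le N$) one obtains a single model $\mathfrak{M}$ of $\mathsf{H}$ whose refuting witnesses lie in $\mathsf{Nat}^{-n}$ for every standard $n$, i.e.\ have unboundedly many levels above them. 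One then takes the submodel $\mathfrak{N}$ of all $a$ with $a\in\overline{\mathsf{V}}(\alpha)$ for some $\alpha$ in the cut $\bigcap_{n}\mathsf{Nat}^{-n}$: this $\mathfrak{N}$ is closed under $\mathcal{P}$ and successor, satisfies $\mathsf{EA}^{\mathsf{set}}$ by the alternative axiomatization of Lemma \ref{EA_set_alt}, and still contains the witnesses, so $\mathcal{ON}(\varphi)$ fails in it. Your own closing paragraph correctly identifies the ``witness near the maximal rank'' phenomenon as the delicate point, but the tall-model argument you actually give does not address it.
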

\begin{proof}
By Lemma \ref{EA_EA^set_ON} it will be enough to prove that 
$$\mathsf{EA}^{\mathsf{set}}\vdash \mathcal{ON}(\varphi)\iff \mbox{ for some $n$ we have }\mathsf{H}\vdash \mathcal{NAT}^{-n}(\varphi).$$

Assume $\mathsf{H}\vdash \mathcal{NAT}^{-n}(\varphi)$. Then we reason in $\mathsf{EA}^{\mathsf{set}}$ to prove $\mathcal{ON}(\varphi)$. The sentence $\varphi$ is of the form $\forall x_1,\ldots,x_k\;\psi(x_1,\ldots,x_k)$, where $\psi$ is $\Delta_0^{\mathsf{pred}}$.  We consider some ordinals $\alpha_1,\ldots,\alpha_k$ and claim that $\mathcal{ON}(\psi(\alpha_1,\ldots,\alpha_k))$. We consider the transitive set $M=\overline{\mathsf{V}}(\max(\alpha_1,\ldots,\alpha_k)+n+1)$. Observe that $\mathfrak{M}=(M,\in,\overline{\mathsf{V}})$ is a transitive model of $\mathsf{H}$ such that ordinal arithmetic inside $\mathfrak{M}$ (given by the definitions for the theory $\mathsf{H}$) coincide with the standard ordinal arithmetic restricted to $M$. And observe that all $\alpha_i$ are in the class $\mathsf{Nat}^{-n}$ of the model $\mathcal{M}$. By internalization of $\mathsf{H}$ proof of $\mathcal{NAT}^{-n}(\varphi)$ we get that $\mathfrak{M}\models \mathcal{NAT}^{-n}(\varphi)$ and hence $\mathfrak{M}\models \mathcal{NAT}(\psi(\alpha_1,\ldots,\alpha_k))$. Thus $\mathcal{ON}(\psi(\alpha_1,\ldots,\alpha_k))$.

Now assume that for all $n$ we have  $\mathsf{H}\nvdash \mathcal{NAT}^{-n}(\varphi)$. We are going to construct a model of $\mathsf{EA}^{\mathsf{set}}$, where $\mathcal{ON}(\varphi)$ fails. By compactness there is a model $\mathfrak{M}$ of $\mathsf{H}$ with $\mathfrak{M}$-naturals $\alpha_1,\ldots,\alpha_k$ such that $\mathfrak{M}\not\models \psi(\alpha_1,\ldots,\alpha_k)$ and  $\mathfrak{M}\not\models \alpha_i\in \mathsf{Nat}^{-n}$, for any $1\le i\le k$ and $n\ge 0$. We consider the intersection $N$ of all $\mathfrak{M}$-classes $\mathsf{Nat}^{-n}$ (for standard $n$). And next we consider the submodel $\mathfrak{N}$ of $\mathfrak{M}$ that consists of all $a\in \mathfrak{M}$ such that $\mathfrak{M}\models a\in \overline{\mathsf{V}}(\alpha)$, for some $\alpha\in N$. Using axiomatization of $\mathsf{EA}^{\mathsf{set}}$ from Lemma \ref{EA_set_alt} it is easy to see that $\mathfrak{N}$ is a model of $\mathsf{EA}^{\mathsf{set}}$. And from construction it is clear that $\mathfrak{N}\models \mathcal{ON}(\varphi)$.\end{proof}

For a rational $a>0$ we denote as $a \mathcal{S}$ the cut consisting of numbers $x$ such that $[x/a]\in \mathcal{S}$. 

\begin{cor}\label{H_and_EA_imp} Suppose $\varphi$ is a $\Pi_1^{\mathsf{pred}}$ sentence,  $\mathsf{EA}\vdash \varphi^{2\mathcal{S}}$. Then $\mathsf{H}\vdash \mathcal{NAT}(\varphi)$.
\end{cor}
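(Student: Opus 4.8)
The plan is to derive the corollary from Lemma \ref{H_and_EA_eq}, whose conclusion only ever produces the cut-down interpretations $\mathcal{NAT}^{-n}$; the entire content of the corollary is to upgrade $\mathcal{NAT}^{-n}$ to the full interpretation $\mathcal{NAT}$, and the factor $2$ hidden in $2\mathcal{S}$ is exactly the slack that makes this upgrade possible. Write $\varphi=\forall\vec x\,\psi(\vec x)$ with $\psi\in\Delta_0^{\mathsf{pred}}$. First I would repackage the hypothesis: using the definition of $a\mathcal{S}$ (so that $x\in 2\mathcal{S}$ abbreviates $[x/2]\in\mathcal{S}$), the sentence $\varphi^{2\mathcal{S}}$ is $\mathsf{EA}$-equivalent, via the change of variable $x=2y$ (and separate treatment of $2y$ and $2y+1$), to $\widehat\varphi^{\,\mathcal{S}}$ for a suitable $\Pi_1^{\mathsf{pred}}$ sentence $\widehat\varphi$ expressing ``$\varphi$ holds on the scale stretched by a factor $2$''. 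Then $\mathsf{EA}\vdash\widehat\varphi^{\,\mathcal{S}}$, and Lemma \ref{H_and_EA_eq} supplies an $n$ with $\mathsf{H}\vdash\mathcal{NAT}^{-n}(\widehat\varphi)$.

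Next I reason inside $\mathsf{H}$ to obtain $\mathcal{NAT}(\varphi)$. Fix naturals $\vec\alpha\in\mathsf{Nat}$ and set $\mu=\max(\vec\alpha)$, and split on how much headroom lies above $\mu$. In the \emph{generic case}, where $\mu\in\mathsf{V}^{-n}$ so that there are at least $n$ levels above $\mu$, I pass to the transitive set model $\mathfrak{N}=(\overline{\mathsf{V}}(\mu+n+1),\in,\overline{\mathsf{V}})$, which is a model of $\mathsf{H}$ in which $\vec\alpha$ lie in $\mathsf{Nat}^{-n}$. Since $\mathcal{NAT}^{-n}(\widehat\varphi)$ is a theorem of $\mathsf{H}$ it holds in $\mathfrak{N}$, and because the $\Delta_0^{\mathsf{pred}}$ matrix and the $\mathsf{H}$-definitions of ordinal arithmetic are absolute between $\mathfrak{N}$ and the ambient universe for ordinals carrying this much headroom, I can read off $\psi(\vec\alpha)$. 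The stretching built into $\widehat\varphi$ is precisely what converts the headroom-$n$ statement $\mathcal{NAT}^{-n}(\widehat\varphi)$ into the headroom-free conclusion $\psi(\vec\alpha)$.

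The remaining case is where $\mu$ sits within $n$ levels of the top, so that $\vec\alpha$ is among the finitely many ``ceiling'' ordinals at which the $\mathsf{H}$-definitions of $+,\cdot,2^{(\cdot)}$ are under-defined (their witnessing partial functions $+_\delta$ require room in $\mathsf{On}^{-4}$). Here I would argue by hand, mirroring the topmost-ordinal bookkeeping already used in this section to extend $+^{-5}$ to a total-on-$\mathsf{On}$ recursion: for such $\mu$ each arithmetic value occurring in $\psi(\vec\alpha)$ either already exists below the ceiling, in which case $\psi(\vec\alpha)$ is certified by an Ackermann-style finite model of size bounded by a member of $2\mathcal{S}$ (so that the hypothesis $\mathsf{EA}\vdash\varphi^{2\mathcal{S}}$ applies exactly as in the proof of Lemma \ref{Con_in_EA}), or it diverges, in which case the corresponding atomic subformula of $\psi$ is decided outright and $\psi(\vec\alpha)$ is matched directly against $\widehat\varphi$. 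The point of the factor $2$ is that evaluating $\psi$ at a ceiling ordinal needs witnesses only up to the doubled scale, i.e.\ inside $2\mathcal{S}$ and not merely inside $\mathcal{S}$.

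I expect the ceiling case to be the main obstacle. The delicate part is to show that, for the top $O(n)$ ordinals, the discrepancy between the (under-defined) $\mathsf{H}$-arithmetic and the ``true'' ordinal arithmetic influences $\psi$ only through atomic formulas whose truth value is forced, and to verify that a single doubling of the cut supplies exactly the slack required---enough to close the gap between $\mathcal{NAT}^{-n}$ and $\mathcal{NAT}$, yet still bounded by a genuine member of $\mathcal{S}$ after the rescaling, so that nothing outside $\mathsf{EA}\vdash\varphi^{2\mathcal{S}}$ is ever invoked.
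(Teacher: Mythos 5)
Your opening move is essentially the paper's own: the paper implements the ``factor $2$'' rescaling as an interpretation $\mathcal{SUM}$ of the predicate-only arithmetical language in itself, in which a number is represented by a pair $(x_1,x_2)$ standing for $x_1+x_2$ (your halving $x=2y+r$ is a close variant), with all atomic translations $\Delta_0^{\mathsf{pred}}$; it then notes $\mathsf{EA}\vdash(\mathcal{SUM}(\varphi))^{\mathcal{S}}$ and applies Lemma \ref{H_and_EA_eq} to get $\mathsf{H}\vdash\mathcal{NAT}^{-n}(\mathcal{SUM}(\varphi))$. Up to that point your plan is sound, modulo actually writing out the $\Delta_0^{\mathsf{pred}}$ graphs for $\cdot$ and $2^x$ under the rescaling (where the paper spends most of its effort) and verifying \emph{inside $\mathsf{H}$} that the interpretation means what it should. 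Note also that your generic case needs no set model: when $\mu\ain\mathsf{V}^{-n}$, the halves of $\vec\alpha$ already lie in $\mathsf{Nat}^{-n}$ of the ambient universe, so you may instantiate the theorem $\mathcal{NAT}^{-n}(\widehat\varphi)$ directly; the detour through $\mathfrak{N}$ is both superfluous and unjustified as written, since inside $\mathsf{H}$ one cannot infer ``$\theta$ holds in $\mathfrak{N}$'' from ``$\theta$ is a theorem of $\mathsf{H}$'' without a meta-level relativization of the fixed proof, axiom instance by axiom instance.

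The genuine gap is the ceiling case, which you yourself flag as unresolved---and that case is the entire content of the corollary, since it is exactly where $\vec\alpha\nain\mathsf{Nat}^{-n}$. The tools you propose there are wrong in kind: certifying $\psi(\vec\alpha)$ ``by an Ackermann-style finite model of size bounded by a member of $2\mathcal{S}$, so that $\mathsf{EA}\vdash\varphi^{2\mathcal{S}}$ applies as in Lemma \ref{Con_in_EA}'' is an argument that lives in $\mathsf{EA}$; inside $\mathsf{H}$ the cuts $\mathcal{S},2\mathcal{S}$ are not classes, the hypothesis $\mathsf{EA}\vdash\varphi^{2\mathcal{S}}$ is not a usable premise, and $\vec\alpha$ may be a \emph{nonstandard} element of the model, for which no finite or standard certification can exist (the hypothesis can be re-used meta-theoretically only for standard instances). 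What actually closes the ceiling case is the rescaling itself, with no external input: if $\alpha_i\ge 2n$, then its half $\beta_i$ has at least $\lfloor\alpha_i/2\rfloor\ge n$ ordinals above it, namely $\beta_i+1,\dots,\alpha_i$, so $\beta_i\ain\mathsf{Nat}^{-n}$ regardless of how little room there is above $\mu$. This is why the paper splits instead on whether the ordinal $2n$ exists: if it does, every natural (ceiling or not) decomposes into two members of $\mathsf{Nat}^{-n}$, and $\mathsf{H}$-verified correctness of $\mathcal{SUM}$ yields $\psi(\vec\alpha)$; if it does not, then every natural in the model is a standard ordinal below $2n$ (otherwise separation, using the formulas $\mathsf{Nmb}_k$, would carve the ordinal $2n$ out of it), and $\varphi$ restricted to these finitely many values is a fixed finite list of true $\Delta_0^{\mathsf{pred}}$ facts verifiable outright in $\mathsf{H}$, truth being supplied by the hypothesis via soundness of $\mathsf{EA}$. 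Your split on $\mu\ain\mathsf{V}^{-n}$ misroutes the main sub-case (large $\alpha_i$, where halving alone suffices) and omits this degenerate sub-case (a short, all-standard universe), so the upgrade from $\mathcal{NAT}^{-n}$ to $\mathcal{NAT}$ is never actually established.
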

\begin{proof} 
Let us define interpretation $\mathcal{SUM}$ of the predicate-only arithmetical language in itself such that numbers are interpreted by pairs of numbers $(x_1,x_2)$ which intended value is $x_1+x_2$. The key feature of $\mathcal{SUM}$ is that all the quantifiers in the translations of atomic formulas are bounded. This allows us to verify the properties of $\mathcal{SUM}$ in theory $\mathsf{H}$ that could not even prove totality of successor function.

The interpretation of equality $(x_1,x_2)=^{\star}(y_1,y_2)$ is $\Delta_0^{\mathsf{pred}}$ formula $$(\exists z\le y_1)(x_1+z=y_1\land y_2+z=x_2)\lor (\exists z\le y_2)(x_2+z=y_2\land y_1+z=x_1).$$
 The interpretation of comparison $(x_1,x_2)\le^{\star}(y_1,y_2)$ is $\Delta_0^{\mathsf{pred}}$ formula
$$\begin{aligned}(\exists z_1,z_2\le y_1)(z_1\le z_2\land x_1+z_2=y_1\land y_2+z_1=x_2)\lor\\(\exists z_1,z_2\le y_2)(z_1\le z_2\land x_2+z_2=y_2\land y_1+z_1=x_1).\end{aligned}$$
The interpretation of successor function graph $S((x_1,x_2))=^{\star}(y_1,y_2)$ is $\Delta_0^{\mathsf{pred}}$ formula
$$\begin{aligned}(\exists z_1,z_2\le y_1)(S(z_1)=z_2\land x_1+z_2=y_1\land y_2+z_1=x_2)\lor\\(\exists z_1,z_2\le y_2)(S(z_1)=z_2\land x_2+z_2=y_2\land y_1+z_1=x_1).\end{aligned}$$
The interpretation $(x_1,x_2)+(x_3,x_4)=^{\star}(y_1,y_2)$ of addition function graph is a $\Delta_0^{\mathsf{pred}}$ formula that expresses the fact that there exist splittings $x_1=z_{1,1}+z_{1,2}$, $x_2=z_{2,1}+z_{2,2}$, $x_3=z_{3,1}+z_{3,2}$, and $x_4=z_{4,1}+z_{4,2}$ such that $y_1=z_{1,1}+z_{2,1}+z_{3,1}+z_{4,1}$ and $y_2=z_{1,2}+z_{2,2}+z_{3,2}+z_{4,2}$. The interpretation $(x_{1,1},x_{1,2})(x_{2,1},x_{2,2})=^{\star}(y_1,y_2)$ of multiplication function graph is a $\Delta_0^{\mathsf{pred}}$ formula that expresses the fact that for all $1\le i,j\le 2$ there exists splitting $x_{i,j}=z_{i,j,1}+z_{i,j,2}$ such that
\begin{enumerate}
    \item $z_{1,i_1,j_1}z_{2,i_2,j_2}\le \max(y_1,y_2)$, for all $1\le i_1,i_2,j_1,j_2\le 2$;
    \item for $1\le i_1,i_2,j_1,j_2,k\le 2$ there exist $w_{i_1,i_2,j_1,j_2,k}\le y_k$ such that
    \begin{enumerate} 
      \item $w_{i_1,i_2,j_1,j_2,1}+w_{i_1,i_2,j_1,j_2,2}=z_{1,i_1,j_1}z_{2,i_2,j_2}$, for all $1\le i_1,i_2,j_1,j_2\le 2$ (we use the condition 1. above to make the comparison in a $\Delta_0^{\mathsf{pred}}$ formula),
      \item $\sum\limits_{1\le i_1,i_2,j_1,j_2\le 2} w_{i_1,i_2,j_1,j_2,k}=y_k$, for $1\le k\le 2$.
    \end{enumerate}
\end{enumerate}

Let us now explain how to define interpretation $2^{(x_1,x_2)}=^{\star}(y_1,y_2)$ of binary exponentiation function graph. We describe it as an algorithm that could be easily transformed to a $\Delta_0^{\mathsf{pred}}$-formula. In the case of $x_1=x_2=0$ we define $2^{(x_1,x_2)}=^{\star}(y_1,y_2)$ to be true if either $y_1=0$ and $y_2=S(0)$, or $y_1=S(0)$ and $y_2=0$. Otherwise, we consider $y_0=\max(y_1,y_2)$. We find $y_0\ge z$ such that $z=x_1+x_2$ (if there are no $z$ with this property then we put $2^{(x_1,x_2)}=^{\star}(y_1,y_2)$ to be false). We find $z'\le z$ such that $S(z')=z$. And we find $w\le y_0$ such that $w=2^z$ (if there are no $w$ with this property then we put $2^{(x_1,x_2)}=^{\star}(y_1,y_2)$ to be false). We put $2^{(x_1,x_2)}=^{\star}(y_1,y_2)$ to be true iff $(w,w)=^{\star}(y_1,y_2)$.   in the case $x_1+x_2\ge 1$  we use the fact that if  $2^{x_1+x_2}=y_1+y_2$ then $2^{x_1+x_2-1}\le \max(y_1,y_2)$ and $2^{x_1+x_2-1}+2^{x_1+x_2-1}=2^{x_1+x_2}$. 

One could check that both $\mathsf{EA}$ and $\mathsf{H}$ verify that the interpretation $\mathcal{SUM}$ works as intended, i.e. for any predicate symbol $P(x_1,\ldots,x_n)$ of predicate-only arithmetical language and its $\mathcal{SUM}$-interpretation $P^{\star}((x_{1,1},x_{1,2}),\ldots,(x_{n,1},x_{n,2}))$ we have
\begin{enumerate}
    \item $\begin{aligned}\mathsf{EA}\vdash &\mathop{\forall}\limits_{1\le i\le n,\; 0\le j\le 2} x_{i,j}(  \bigwedge\limits_{1\le i\le n} x_{i,1}+x_{i,2}=x_{i,0} \to \\ & (P(x_{1,0},\ldots,x_{n,0})\mathrel{\leftrightarrow} P^{\star}((x_{1,1},x_{1,2}),\ldots,(x_{n,1},x_{n,2})))),\end{aligned}$
    \item $\begin{aligned}\mathsf{H}\vdash \mathcal{NAT}(&\mathop{\forall}\limits_{1\le i\le n,\; 0\le j\le 2} x_{i,j}(  \bigwedge\limits_{1\le i\le n} x_{i,1}+x_{i,2}=x_{i,0} \to \\ & (P(x_{1,0},\ldots,x_{n,0})\mathrel{\leftrightarrow} P^{\star}((x_{1,1},x_{1,2}),\ldots,(x_{n,1},x_{n,2}))))).\end{aligned}$
\end{enumerate}

Recall that we consider a $\Pi_1^{\mathsf{pred}}$ sentence $\varphi$ such that $\mathsf{EA}\vdash \varphi^{2\mathcal{S}}$. Since $\mathsf{EA}$ verifies that the sums of pairs of numbers from $\mathcal{S}$ are precisely the numbers from $2\mathcal{S}$, we have $\mathsf{EA}\vdash (\mathcal{SUM}(\varphi))^{\mathcal{S}}$. And since the interpretation $\mathcal{SUM}$ interpretes all the predicate symbols by $\Delta_0^{\mathsf{pred}}$ formulas, the translation $\mathcal{SUM}(\varphi)$ is a $\Pi_1^{\mathsf{pred}}$ formula. Thus by Lemma \ref{H_and_EA_eq} we have $\mathsf{H}\vdash \mathcal{NAT}^{-n}(\mathcal{SUM}(\varphi))$, for some $n$. Now let us reason in $\mathsf{H}$ to prove $\varphi$. We consider two cases: 1. the number $2n$ does not exists, 2. the number $2n$ exists. In the case 1. we exploit the fact that $\varphi$ is a true $\Pi_1^{\mathsf{pred}}$ fact and just formalize in $\mathsf{H}$ the direct verification of the fact that $\varphi$ holds when the range of quantifiers is restricted to the numbers $<2n$. In the case 2. we observe that any natural number $x\in \mathsf{Nat}$ is equal to the sum $x_1+x_2$ for some $x_1,x_2\in \mathsf{Nat}^{-n}$ and hence  $\mathcal{NAT}^{-n}(\mathcal{SUM}(\varphi))$ implies $\varphi$.
\end{proof}
Note that with additional efforts, in Corollary \ref{H_and_EA_imp} the cut $2\mathcal{S}$ could be replaced with the cut $(1+\varepsilon)\mathcal{S}$, for any fixed rational $\varepsilon>0$.

An easy adaptation of Lemma \ref{Con_in_EA} to the case of $\mathsf{H}$ shows that $\mathsf{EA}\vdash (\mathsf{Con}^{\mathsf{pred}}(\mathsf{H}_{<\omega}))^{2\mathcal{S}}$. And by Corollary \ref{H_and_EA_imp} we conclude
\begin{thm} \label{Con_in_H}
$\mathsf{H}$ proves $\mathcal{NAT}(\mathsf{Con}^{\mathsf{pred}}(\mathsf{H}_{<\omega}))$.
\end{thm}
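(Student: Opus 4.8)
The plan is to follow the two-step route signalled by the remark preceding the theorem: first establish the purely arithmetical statement $\mathsf{EA}\vdash(\mathsf{Con}^{\mathsf{pred}}(\mathsf{H}_{<\omega}))^{2\mathcal{S}}$ by a finite-model argument, and then transfer it into $\mathsf{H}$ by invoking Corollary \ref{H_and_EA_imp} with $\varphi=\mathsf{Con}^{\mathsf{pred}}(\mathsf{H}_{<\omega})$. Since $\mathsf{Con}^{\mathsf{pred}}(\mathsf{H}_{<\omega})$ is (equivalent to) a $\Pi_1^{\mathsf{pred}}$ sentence, the corollary applies directly and yields $\mathsf{H}\vdash\mathcal{NAT}(\mathsf{Con}^{\mathsf{pred}}(\mathsf{H}_{<\omega}))$ as soon as the first step is in place. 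Thus essentially all of the work is in the arithmetical step, which is an easy adaptation of Lemma \ref{Con_in_EA} to the single-sorted setting.

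For that step I would reason in $\mathsf{EA}$ and, as in Lemma \ref{Con_in_EA}, it suffices to rule out $\mathsf{PrfCnt}_{\mathsf{H}_{<\omega}}(p)$ for all $p\in 2\mathcal{S}$. So assume for contradiction that some $p\in 2\mathcal{S}$ is a Hilbert-style proof of $\exists x\,\lnot x=x$ from the axioms of $\mathsf{H}_{<\omega}$, and let $n$ be the greatest index for which $\exists x\,\mathsf{Nmb}_n(x)$ occurs in $p$. The numbering conditions from the start of Section \ref{Non-god_section} give $p\ge 2|\exists x\,\mathsf{Nmb}_n(x)|\ge 2n$, hence $n\le[p/2]$; since $p\in 2\mathcal{S}$ means $[p/2]\in\mathcal{S}$ and $\mathcal{S}$ is a downward-closed cut, we obtain $n\in\mathcal{S}$ and therefore $n+1\in\mathcal{S}$, so the number $2^0_{n+1}$ exists. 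This is exactly what makes the construction feasible inside $\mathsf{EA}$, and it is where the slack of the cut $2\mathcal{S}$ (rather than $\mathcal{S}$) is consumed: the factor-$2$ bound on the length of the $\mathsf{Nmb}$-axiom matches the factor-$2$ cut demanded by Corollary \ref{H_and_EA_imp}.

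I would then take $\mathfrak{M}$ to be the level $\mathsf{V}_{n+1}$ of the von Neumann hierarchy computed with respect to Ackermann's membership, i.e. the numbers $<2^0_{n+1}$, with $\in$ interpreted by $\in_{\mathsf{Ack}}$ and $\overline{\mathsf{V}}$ by $\overline{\mathsf{V}}_{\mathsf{Ack}}$ from Section \ref{bi-interpretabilit_section}. A direct check shows that $\mathfrak{M}$ validates every axiom of $\mathsf{H}_{<\omega}$ that can occur in $p$: extensionality holds since $\mathfrak{M}$ is a transitive initial segment of the Ackermann universe; every instance of full separation holds because $\{z\in x\mid\mathfrak{M}\models\varphi(z)\}$ is a subset of a lower level and all such subsets already lie in $\mathfrak{M}$; the defining axiom for $\overline{\mathsf{V}}$ holds by the same computation used globally in Section \ref{bi-interpretabilit_section}; and each $\exists x\,\mathsf{Nmb}_k(x)$ with $k\le n$ is witnessed by the Ackermann ordinal $k\in\mathsf{V}_{k+1}\subseteq\mathsf{V}_{n+1}$. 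Because the arithmetical language contains exponentiation, the single-formula satisfaction relation $\mathfrak{M}\models\chi$ is $\Delta_0$, so by induction on the subproofs of $p$ every formula appearing in $p$ is true in $\mathfrak{M}$; in particular $\mathfrak{M}\models\exists x\,\lnot x=x$, which is absurd. This contradiction gives $\mathsf{EA}\vdash(\mathsf{Con}^{\mathsf{pred}}(\mathsf{H}_{<\omega}))^{2\mathcal{S}}$, and Corollary \ref{H_and_EA_imp} then finishes the proof.

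The main obstacle I expect is the careful verification that the finite Ackermann model satisfies the axioms at its top level---in particular that $\overline{\mathsf{V}}_{\mathsf{Ack}}$ still obeys its defining axiom when $\overline{\mathsf{V}}(x)$ lands on the maximal element of $\mathfrak{M}$, and that full unbounded separation really is unproblematic once all quantifiers are restricted to the finite domain---together with the bookkeeping ensuring that the whole model-construction and truth-induction argument is formalized within $\mathsf{EA}$ using only the number $2^0_{n+1}$, whose existence is guaranteed precisely by $n\in\mathcal{S}$. Everything else is routine.
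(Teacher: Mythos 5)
Your proposal is correct and follows exactly the paper's route: the paper proves this theorem by noting that an easy adaptation of Lemma \ref{Con_in_EA} (the finite Ackermann-model argument, with the single domain $\mathsf{V}_{n+1}$ computed via $\in_{\mathsf{Ack}}$) yields $\mathsf{EA}\vdash(\mathsf{Con}^{\mathsf{pred}}(\mathsf{H}_{<\omega}))^{2\mathcal{S}}$, and then applies Corollary \ref{H_and_EA_imp}. Your fleshed-out details — including the observation that $p\ge 2n$ together with $[p/2]\in\mathcal{S}$ puts $n$ in $\mathcal{S}$, and the verification that the top level of the finite model still satisfies separation and the defining axiom for $\overline{\mathsf{V}}$ — are exactly the content the paper leaves implicit in the phrase ``easy adaptation.''
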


\appendix
\section{ $\mathsf{H}_{<\omega}$ and Robinson's Arithmetic $\mathsf{R}$} \label{interpretability}
In the section we present several observations about the connection between our theory $\mathsf{H}_{<\omega}$ and Robinson's arithmetic $\mathsf{R}$.  They were noticed by Albert Visser when he read a draft of this paper.

Robinson's arithmetic $\mathsf{R}$ is a weak arithmetical theory introduced by Tarski, Mostowski, and Robinson \cite{TMR53} and known to be hereditarily undecidable. As usual, the numeral $\underline{n}$ is the term $S^n(0)$. The axioms of Robinson's arithmetic $\mathsf{R}$ are:
\begin{enumerate}
\item $\underline{n}+\underline{m}=\underline{n+m}$;
\item $\underline{n}\cdot\underline{m}=\underline{n\cdot m}$;
\item $\underline{n}\ne \underline{m}$, for $n\ne m$;
\item $x\le n \to \bigvee\limits_{ i\le n} x=\underline{i}$;
\item $x\le \underline{n}\lor \underline{n}\le x$.
\end{enumerate}
Visser have showed that the interpretability class of the theory $\mathsf{R}$ is fairly special. A theory is called locally finitely satisfiable if any its finite subtheories have a finite model. The theory $\mathsf{R}$ is locally finitely satisfiable. And  it were proved by Visser \cite{Vis14} that any c.e. locally finitely satisfiable theory $T$ is interpretable in $\mathsf{R}$. In other words the interpretability class of $\mathsf{R}$ is the greatest among interpretability classes of c.e. locally finitely satisfiable theories.

Observe that theory $\mathsf{H}_{<\omega}$ is locally finitely satisfiable, thus it is interpretable in $R$. On the other hand, using our development of ordinal arithmetic in $\mathsf{H}$ we could interpret $\mathsf{R}$ in  $\mathsf{H}_{<\omega}$. The domain of the interpretation are finite ordinals. The constant $0$ is interpreted by the empty set. For the purposes of the interpretation we make the  partial functions $S,+,\cdot$ total by assigning the value $0$ to the inputs where partial functions were undefined. Trivial check shows that the translations of all the axioms of $\mathsf{R}$ are provable in $\mathsf{H}_{<\omega}$.

Thus $\mathsf{R}$ and $\mathsf{H}_{<\omega}$ are mutually interpretable. And hence theory $\mathsf{H}_{<\omega}$ from interpretability theoretic point of view could be regarded as the set-theoretic analogue of $\mathsf{R}$. However, unlike  $\mathsf{H}_{<\omega}$, to the best of the author's knowledge, no analogue of the self-verification property of $\mathsf{H}_{<\omega}$ is known for $\mathsf{R}$. And due to the extreme weakness of $\mathsf{R}$, I do not expect that $\mathsf{R}$ proves its own consistency for any natural arithmetization of its consistency assertion. 


\section*{Acknowledgments}
I am grateful to Lev Beklemishev for introducing me to the Willard's approach to construction of theories proving their own consistency\footnote{Note that Beklemishev have developed a still unpublished simplification of Willard's construction (which is different from the approach used in the present paper).} and for latter stimulating discussion of the results of the present paper. And I am grateful to Albert Visser for his useful comments (see Appendix \ref{interpretability}).

This work is supported in part by Young Russian Mathematics award.

\bibliographystyle{abbrv}
\bibliography{bibliography}

\end{document}